\documentclass{amsart}

\usepackage[english]{babel}
\usepackage[letterpaper,top=2cm,bottom=2cm,left=3cm,right=3cm,marginparwidth=1.75cm]{geometry}
\usepackage{amssymb}
\usepackage{amsmath}
\usepackage{amsthm}
\usepackage{thmtools}
\usepackage{graphicx}
\usepackage{physics}
\usepackage{tikz-cd}
\usepackage{mathtools}
\usepackage[all]{xy}
\usepackage{xcolor}

\theoremstyle{plain}
\newtheorem{theorem}{Theorem}[section]
\newtheorem{corollary}[theorem]{Corollary}
\newtheorem{lemma}[theorem]{Lemma}
\newtheorem{proposition}[theorem]{Proposition}
\theoremstyle{definition}
\newtheorem{definition}[theorem]{Definition}
\newtheorem{remark}[theorem]{Remark}
\newtheorem{example}{Example}[section]

\usepackage[colorlinks=true,allcolors=blue]{hyperref}
\usepackage{cleveref}

\DeclareMathOperator{\Stab}{Stab}
\DeclareMathOperator{\gldim}{gldim}
\DeclareMathOperator{\Hom}{Hom}
\DeclareMathOperator{\Ext}{Ext}
\DeclareMathOperator{\Aut}{Aut}
\DeclareMathOperator{\Coh}{Coh}
\DeclareMathOperator{\Pic}{Pic}
\DeclareMathOperator{\PreStab}{PreStab}
\DeclareMathOperator{\Gd}{Gd}
\DeclareMathOperator{\algldim}{gl.dim}

\DeclareMathOperator{\Geo}{Geo}
\DeclareMathOperator{\PSL}{PSL}
\DeclareMathOperator{\ch}{ch}

\DeclareMathOperator{\modcat}{mod}
\DeclareMathOperator{\rk}{rk}

\newcommand{\D}{\mathcal{D}}
\newcommand{\Db}{\mathrm{D}^{\mathrm b}}
\newcommand{\cP}{\mathcal{P}}
\newcommand{\A}{\mathcal{A}}
\newcommand{\B}{\mathcal{B}}
\newcommand{\cM}{\mathcal{M}}
\newcommand{\cF}{\mathcal{F}}
\newcommand{\cQ}{\mathcal{Q}}
\newcommand{\cT}{\mathcal{T}}
\newcommand{\cV}{\mathcal{V}}
\newcommand{\cW}{\mathcal{W}}
\newcommand{\OO}{\mathcal{O}}
\newcommand{\C}{\mathbb{C}}
\newcommand{\R}{\mathbb{R}}
\newcommand{\Z}{\mathbb{Z}}
\newcommand{\QQ}{\mathbb{Q}}
\newcommand{\HH}{\mathbb{H}}
\newcommand{\PP}{\mathbb{P}}
\newcommand{\FF}{\mathbb{F}}
\newcommand{\ii}{\mathrm{i}}
\newcommand{\bP}{\PP^{1}}
\newcommand{\pre}{\mathrm{pre}}
\newcommand{\Sfun}{\mathbb{S}}
\newcommand{\Sdimup}{\overline{\operatorname{Sdim}}}

\title{The Global Dimension Function on Stability Manifolds}

\author[D. Wu]{Dongjian Wu}
\address{D. Wu: Shanghai Institute for Mathematics and Interdisciplinary Sciences (SIMIS),
Shanghai 200433, China}
\email{wdj@simis.cn}

\author[N. Zhang]{Nantao Zhang}
\address{N. Zhang: School of Mathematics, Sun Yat-sen University, Guangzhou 510275, China}
\email{zhangnt@mail.sysu.edu.cn}

\keywords{Bridgeland stability conditions, global dimension, compactification}
\date{}

\begin{document}

\begin{abstract}
In this paper, we study the reachability and boundary behavior of the global dimension
function on Bridgeland stability manifolds.  For a smooth projective variety
$X$ of dimension $n$, we prove that the infimum of the global dimension
function is $n$.  This value is attained when $-K_X$ is ample, is taken by
every stability condition when $K_X\in\Pic^0(X)$, and is not attained when
$K_X$ is big and nef. We also extend these reachability results to pre-stability conditions. In addition, we construct compactifications of reduced
stability spaces of smooth projective curves to which the global dimension
function extends continuously. Finally, we discuss algebraic models of the reachability problem, including finite-dimensional algebras and semiorthogonal decompositions.
\end{abstract}
\maketitle

\section{Introduction}
Bridgeland stability conditions were introduced as a mathematical formulation of Douglas's notion of \(\Pi\)-stability for D-branes \cite{Bridgeland}.  They extend classical slope stability from abelian categories to triangulated categories.  A stability condition \(\sigma=(\cP,Z)\) consists of a central charge \(Z\) and a slicing \(\cP=\{\cP(\phi)\mid \phi\in\R\}\), where each \(\cP(\phi)\) is an additive subcategory whose nonzero objects are semistable of phase \(\phi\).  A fundamental feature of Bridgeland's theory is that \(\Stab(\D)\) carries a natural structure of a complex manifold, locally modeled on the space of central charges.  Thus \(\Stab(\D)\) is a geometric space which records both wall-crossing phenomena of semistable objects and deformations of central charges.

\subsection{Motivations}
In this paper, we study the global dimension function on a stability manifold, which was
introduced by Ikeda--Qiu in their work on $q$-stability conditions
\cite{IQ2023} and subsequently studied by Qiu in connection with Gepner
equations and the topology of stability spaces
\cite{Qiu,QiuFlow}.  For a stability condition $\sigma=(\cP,Z)$, it is defined by
\[
  \gldim(\sigma)
  =
  \sup\bigl\{
    \phi_2-\phi_1
    \bigm|
    \Hom_{\D}(E_1,E_2)\neq0,
    \ 0\neq E_i\in\cP(\phi_i)
  \bigr\}.
\]
In other words, $\gldim(\sigma)$ measures the largest phase separation across
which a nonzero morphism can occur.  The function is continuous and is
invariant under exact autoequivalences and the natural $\C$-action on
$\Stab(\D)$ \cite{Qiu}.  It therefore descends to the corresponding reduced
quotient of the stability manifold.

When $\Stab(\D)\neq\varnothing$, we write
\[
  \Gd\D
  :=
  \inf_{\sigma\in\Stab(\D)}\gldim(\sigma).
\]
The basic \emph{reachability problem} asks whether this infimum is attained by
a stability condition. Kikuta--Ouchi--Takahashi proved the fundamental lower
bound
\begin{equation}\label{eq:intro-kot-bound}
  \Sdimup\D
  \leq
  \Gd\D,
\end{equation}
where $\Sdimup\D$ is the upper Serre dimension
\cite[Theorem~4.2]{KOT}.  Since
\[
  \Sdimup\bigl(\Db(X)\bigr)=\dim X
\]
for a smooth projective variety $X$, every stability condition in the
relevant stability space has global dimension at least $\dim X$.
Kikuta--Ouchi--Takahashi also showed that, for curves, the behavior of this
lower bound already depends on the canonical class: it is attained for
$\PP^1$, is attained by every stability condition for elliptic curves, and is an unattained infimum for
curves of genus at least two \cite[Theorem~5.16]{KOT}.  This is the starting
point of our geometric reachability problem in arbitrary dimension.

As pointed out in \cite{Bridgeland2006}, the quotient space
\(\Aut(\Db(X))\backslash\Stab(\Db(X))/\C\) is closely related to the
stringy K\"ahler moduli space in mirror symmetry. We therefore expect these
actions to extend to suitable compactifications of stability spaces. In
particular, since the global dimension function is invariant under
\(\Aut(\Db(X))\) and the \(\C\)-action, it is natural to ask whether it
extends continuously to a compactified stability space. The compactification
proposed in \cite{KKO} does not admit such an extension in general. Building
on \cite{HLJR,HLR}, we construct compactifications of stability spaces of
curves that are compatible with \(\gldim\).

\subsection{Main results}
Our first main result determines the infimum of the global dimension function for smooth projective varieties
and describes its reachability in terms of positivity of the canonical bundle. Recall that the value $\Gd\D$ is reachable on $\D$ if there is a
$\sigma\in\Stab(\D)$ such that $\gldim(\sigma)=\Gd\D$.

\begin{theorem}[\Cref{thm:variety}]
\label{thm:intro-varieties}
Let $X$ be a smooth projective variety of dimension $n$ over $\C$. Then
\[
\Gd\bigl(\Db(X)\bigr)=n.
\]
Moreover, the following statements hold.
\begin{enumerate}
    \item If $-K_X$ is ample, then $\Gd\bigl(\Db(X)\bigr)$ is reachable on $\Db(X)$;
    \item If $K_X\in\Pic^0(X)$, then $\gldim$ is constantly equal to $n$ on $\Stab(\Db(X))$;
    \item If $K_X$ is big and nef, then $\Gd\bigl(\Db(X)\bigr)$ is not reachable on $\Db(X)$.
\end{enumerate}
\end{theorem}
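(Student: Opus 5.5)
The overall strategy is to combine the lower bound \eqref{eq:intro-kot-bound}, $\gldim(\sigma)\ge n$ for all $\sigma$, with a matching upper bound. For the infimum, I will exhibit stability conditions whose global dimension tends to $n$. There is a subtlety: stability conditions on $\Db(X)$ are not known to exist in general. So the statement $\Gd=n$ should be read as $\inf=n$ over whatever stability space is considered, presumably with $\Stab(\Db(X))\neq\varnothing$ assumed, or with pre-stability conditions allowed. For the upper bound I will work near the large volume limit. I take a geometric (pre-)stability condition $\sigma_t$ with central charge $Z_t(E)=-\int e^{-(B+\ii tH)}\ch(E)$. The key estimate is the following: as $t\to\infty$, every $\sigma_t$-semistable object of phase $\phi$ is asymptotically a shifted Gieseker-type semistable sheaf of dimension $d$, and its phase approaches $\phi\approx (n-d)/2$ mod $1$ (up to an overall normalization). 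A nonzero $\Hom(E_1,E_2)$ is $\Ext^k(F_1,F_2)$ between sheaves with $k\le n$. By Serre duality, the $k=n$ case requires a morphism $F_2\to F_1\otimes\omega_X$, which forces the phase comparison $\phi(F_2)\le\phi(F_1\otimes\omega_X)+o(1)$. This gives $\gldim(\sigma_t)\le n+\epsilon(t)$ with $\epsilon(t)\to0$, hence $\Gd=n$.

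For (2), $K_X\in\Pic^0(X)$ means $\Sfun=(-\otimes\omega_X)[n]$ with $\omega_X$ numerically trivial. Tensoring by a numerically trivial line bundle preserves the central charge, and it should preserve the slicing of every stability condition, at least on the connected components with numerical central charge. This would follow from a deformation argument: $\Pic^0$ is connected and acts continuously, and an autoequivalence that fixes $Z$ and is connected to the identity fixes $\sigma$. Hence $\Sfun$ acts on $\sigma$ as $[n]$, i.e.\ $\Sfun\sigma=\sigma\cdot n$ with shifts by exactly $n$. Then $\Hom(E_1,E_2)=\Hom(E_2,\Sfun E_1)^\vee$ forces $\phi_2\le\phi_1+n$, so $\gldim\le n$ for every $\sigma$, and equality follows from the lower bound.

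For (1), when $-K_X$ is ample, I would look for $\sigma$ with $\gldim(\sigma)=n$ exactly. The standard tool is the criterion $\gldim\sigma\le n$ if and only if $\phi^+(\Sfun E)\le \phi^-(E)+n$... or more precisely, if and only if $\Sfun E$ has phases at least those of $E$ shifted by $n$ minus nothing extra. Since $\otimes\omega_X$ is a tensor by an anti-ample bundle, I would choose $\sigma$ for which $-\otimes\omega_X$ decreases phases. In dimension $1$ this is the $\PP^1$ example from \cite{KOT}. In general, the candidate is the limit of the large volume computation above: the sign of $K_X\cdot H$ controls the $o(1)$ error, and for $-K_X$ ample that error is nonpositive for suitable $B$ and $t$. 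This step is the main obstacle. It requires controlling all semistable objects, not just asymptotic ones. If the direct approach fails, I would first try a full strong exceptional collection: Fano varieties with such collections carry algebraic stability conditions via a heart $\modcat A$. Otherwise I would use the semiorthogonal decomposition machinery, provided later sections allow it.

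For (3), assume $\gldim(\sigma)=n$ and derive a contradiction. Using the Serre functor, one gets that for every semistable $E$ of phase $\phi$, every semistable factor of $E\otimes\omega_X$ has phase at most $\phi$. Iterating, $\phi^+(E\otimes\omega_X^{m})\le\phi^+(E)$ for all $m$. On the other hand, the central charge of $\OO_x$ is fixed under tensoring, while $Z(\OO_X\otimes\omega_X^m)$ grows polynomially of degree $n$ in $m$ with leading term $K_X^n/n!\,(\ldots)>0$ by bigness. A phase-and-mass argument then contradicts the bound, using $\Hom(\OO_X,\omega_X^m)\neq0$ for large $m$ together with $\Hom(\omega_X^m,\OO_x)\neq0$. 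The idea is that the skyscraper's fixed phase squeezes $\omega_X^m$ into a bounded phase window while its mass grows. Combined with nefness, which is needed so that no negative-degree curves obstruct the squeeze, this should yield the contradiction, generalizing the genus $\ge2$ argument of \cite{KOT}.
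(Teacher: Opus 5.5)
Your lower bound and your treatment of (2) are sound and agree with the paper: the KOT bound; triviality of the $\Pic^0(X)$-action by connectedness and discreteness of the fibres of $\sigma\mapsto Z_\sigma$; then the Serre-duality criterion $\gldim(\sigma)=n\iff\sigma\otimes K_X\lessapprox\sigma$ of \Cref{cor:key}.

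\textbf{The upper bound and part (1).} Both rest on a tool you do not have. The large-volume description is object-by-object: the value of $t$ beyond which semistable objects of a given class look like Gieseker-semistable sheaves depends on the Chern character. But $\gldim(\sigma_t)$ is a supremum over \emph{all} $\sigma_t$-semistable objects at fixed $t$. What you need for $\Gd\le n$ is a \emph{uniform} bound on $\delta_{K_X}(\sigma)=d(\cP,\cP\otimes K_X)$, since Serre duality applied to every nonzero $\Hom(E,F)$ gives $\gldim(\sigma)\le n+\delta_{K_X}(\sigma)$. For (1) you need the exact inequality $\phi^+(E\otimes K_X)\le\phi(E)$ for every semistable $E$. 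Neither your $o(1)$ nor the heuristic about the sign of $K_X\cdot H$ delivers these.

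Moreover, central charges $-\int_X e^{-(B+\ii tH)}\ch$ are not known to be realized on an arbitrary $X$ of dimension $\ge3$. So the weakening of the statement you propose is a symptom of this gap, not a real caveat.

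The paper instead uses Li's stability conditions $f^\sharp\sigma^{a,b}$. These are pulled back along the embedding $f\colon X\hookrightarrow\PP^N$ given by $\lvert H\rvert$, from conditions pushed forward from $E^N$. They exist on every $X$ and satisfy $f^\sharp\sigma^{a,b}\otimes\OO(\pm H)=f^\sharp\sigma^{a,b\pm1}$. The monotonicity $\sigma^{a,b}\lessapprox\sigma^{a,b+c}$, proved on $E^n$ by isogeny rescaling and transported by $\pi_\sharp$ and $f^\sharp$, then gives $\sigma\otimes\OO(-H)\lessapprox\sigma$ for any ample $H$. Taking $H=-K_X$ gives (1).

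Choosing $H$ with $H\pm K_X$ globally generated, the same phase-order calculus gives $\sigma\otimes\OO(-H)\lessapprox\sigma\otimes K_X\lessapprox\sigma\otimes\OO(H)$. Hence $\delta_{K_X}\le d(\cP^{a,-1},\cP^{a,1})\le d(\cQ^{1,-1/a},\cQ^{1,1/a})\to0$.

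Your fallback via full strong exceptional collections does not cover Fano varieties without one (e.g.\ cubic threefolds). Even when one exists, you would still need the endomorphism algebra to have global dimension exactly $n$.

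\textbf{Part (3).} The squeeze does not close. From $\sigma\otimes K_X\lessapprox\sigma$ you only get one-sided bounds $\phi^+(E\otimes K_X^{m})\le\phi^+(E)$. A section $\OO_X\to K_X^{m}$ then yields $\phi^-(\OO_X)\le\phi^+(K_X^{m})\le\phi^+(\OO_X)$, which is no contradiction unless the line bundles involved are stable. For $n\ge2$ and arbitrary $\sigma$, neither line bundles nor skyscraper sheaves need be semistable, so there is no ``fixed phase'' of $\OO_x$ to use. The growth of $\lvert Z(K_X^{m})\rvert$ constrains masses, not phases.

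The missing idea is a reduction to curves, where stability of line bundles is automatic.
\begin{enumerate}
\item Since $K_X$ is big and nef, it is semiample by the basepoint-free theorem; nefness is used only here.
\item A general $D\in\lvert mK_X\rvert$ is then smooth, connected (by Kawamata--Viehweg vanishing), and $K_X|_D$ is again big and semiample.
\item The inequality $\sigma\otimes mK_X\lessapprox\sigma$ guarantees, by Li's criterion, that the restriction $i^\sharp\sigma$ is a stability condition on $D$ still satisfying $i^\sharp\sigma\otimes mK_X|_D\lessapprox i^\sharp\sigma$.
\item Inducting down to a curve, a nonzero non-isomorphism $L'\to L'\otimes L^{m}$ between stable line bundles has strictly larger target phase. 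This contradicts the phase order.
\end{enumerate}
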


Thus the inequality \eqref{eq:intro-kot-bound} is sharp for every smooth
projective variety:
\[
  \Sdimup\bigl(\Db(X)\bigr)
  =
  \Gd\bigl(\Db(X)\bigr)
  =
  \dim X.
\]
The three positivity regimes above have different attainment
behavior.  The condition $K_X\in\Pic^0(X)$ includes the Calabi--Yau case
$K_X\simeq\OO_X$.  The
Fano construction also yields stability conditions on the total space of
$K_X$ supported on the zero section by the inducing strategy of \cite{MaWuZh:2026}.

We also consider the same reachability problem on the pre-stability space. The analogue of \Cref{thm:intro-varieties} remains true for pre-stability conditions with one modification: when \(K_X\in\Pic^0(X)\), we only obtain reachability, but we do not claim constancy for all pre-stability conditions. See \Cref{prop:pre-reachability} for the precise statement. It is clear that $\Stab(\D)\subseteq\PreStab(\D)$ with equality when the support property is automatic. The following shows
that this equality is exceptional and thus passing from stability conditions to pre-stability conditions genuinely enlarges the reachability problem.
\begin{proposition}[{\Cref{prop:support}}]
The following statements hold.
\begin{enumerate}
  \item Let \(Q\) be a finite connected acyclic quiver and
  \(\D_Q=\Db(\modcat Q)\). Then we have
  \[
    \PreStab(\D_Q)=\Stab(\D_Q)
    \quad\Longleftrightarrow\quad
    Q\text{ is Dynkin or Euclidean}.
  \]
  \item Let \(X\) be a connected smooth projective complex variety of
  positive dimension. Then we have
  \[
    \PreStab(\Db(X))=\Stab(\Db(X))
    \quad\Longleftrightarrow\quad X\cong\bP.
  \]
\end{enumerate}
\end{proposition}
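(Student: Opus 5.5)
The plan is to use a single criterion: $\PreStab(\D)=\Stab(\D)$ holds exactly when every pre-stability condition automatically satisfies the support property. For $\D_Q$ and $\Db(X)$ the numerical Grothendieck group $\Lambda$ has finite rank, and $\Stab(\D)$ is open in $\PreStab(\D)$. So equality fails as soon as we can exhibit one pre-stability condition $\sigma=(\cP,Z)$ with no quadratic form $q$ on $\Lambda_\R$ such that $q$ is negative definite on $\ker Z$ and $q(v(E))\geq 0$ for all semistable $E$. Equivalently, it fails when the classes of semistable objects accumulate, after normalisation, onto a nonzero vector of $\ker Z$. Conversely, equality holds when the set of classes of stable objects is too small to accumulate: for instance, when there are only finitely many stable classes up to the relevant symmetries, or when every semistable class is a real Schur root or lies on a line.

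For (1), the "if" direction is handled by cases. In the Dynkin case there are finitely many indecomposables, so the set of classes of semistable objects is finite up to shift. Hence $\{|Z(v)|/\|v\|\}$ is bounded below for every $Z$, and support is automatic. In the Euclidean case, stable objects have classes that are real roots or the imaginary root $\delta$. Real roots are of the form $\pm(\alpha+m\delta)$, so normalised classes accumulate only at $\pm\delta/\|\delta\|$, and $Z(\delta)\neq0$ because some object of class $\delta$ (a regular homogeneous module, or its image under a shift/equivalence) is semistable with nonzero charge. The Tits form $q_Q$ is positive semidefinite with radical $\R\delta$, and $q_Q(v)\geq0$ on all roots. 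I would therefore use $q=q_Q-C|Z|^2$ for large $C$: since $Z(\delta)\neq 0$, the form $|Z|^2$ is positive on the radical, so $q$ is negative definite on $\ker Z$. Some care is needed, because the usual support property uses the opposite sign convention, $q\geq0$ on semistables. I would replace this by the inequality $\|v\|\leq C|Z(v)|$ directly: the accumulation point has $Z\neq0$, and the finitely many remaining root directions are handled individually. For the "only if" direction, take $Q$ wild. Then there are imaginary roots $v$ with $q_Q(v)<0$, and the imaginary cone contains real directions. By King's theorem, a generic King stability $\theta$ orthogonal to $v$ gives stable modules of every class in $\N v\cap\{\text{Schur roots}\}$. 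I would pick $Z$ on the standard heart $\modcat Q$ so that $Z$ vanishes on a limit direction of Schur roots while staying in the upper half plane on simples. Choosing $Z$ with real part $-\theta$ and positive imaginary part only on simples cannot vanish, so I would instead use a non-standard heart or a limit. Concretely, I would take a sequence of real Schur roots $v_k$, for example from Coxeter orbits, whose directions converge to an irrational eigenvector $w$ of the Coxeter transformation. Then I would choose $Z$ with $Z(w)=0$ that still defines a stability function on a suitable tilted heart, with all $v_k$ semistable. This gives a pre-stability condition that violates support.

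For (2), the "if" direction follows from Okada's description: $\Stab(\Db(\bP))$ consists of stability conditions whose stable objects are the shifts of $\OO(k)$ and of $\OO_x$, with classes $(1,k)$ and $(0,1)$. Their normalised classes accumulate only at $(0,1)$, and $Z(\OO_x)\neq0$. Here I would rely on the fact that every pre-stability condition on $\bP$ is of this form, since the same analysis of hearts via Macrì's classification of exceptional pairs works without using support. For the "only if" direction, if $X\not\cong\bP$, I want a pre-stability condition without support. For curves of genus $g\geq1$ there is no such condition on $\Lambda=\Z^2$: numerically every stability function on $\Coh$ is fine, so I would instead use higher-dimensional $X$ or a finer $\Lambda$. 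If the paper uses the full $K_0$ or $H^*$ lattice, then for $g\geq1$ the group $\Pic^0$ adds rank, and $Z$ may vanish on degree-zero line bundle differences while remaining positive. For $\dim X\geq2$, I would take $Z=-\int e^{-(B+\ii\omega)}\ch$ with $(B,\omega)$ below the Bogomolov region, or a "tilt" stability with irrational parameters where support fails.

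The main obstacle is the wild quiver and higher-dimensional constructions: producing an explicit $Z$ vanishing on an accumulation direction while keeping the HN property. First I would try limits of Coxeter-orbit directions together with Bridgeland's criterion that a stability function on a finite-length heart with finitely many simples automatically has HN filtrations.
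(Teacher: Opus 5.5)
Your criterion (support fails exactly when normalised classes of semistable objects accumulate on $\ker Z$) and your Dynkin argument are fine. However, both ``only if'' directions have a genuine gap, and one of your claims is false. You assert that a curve $C$ of genus $g\ge1$ admits no pre-stability condition violating support on $\Lambda=K_{\rm num}(C)=\Z^2$; if that were so, (2) would fail for such curves. Counterexample: for $\beta\notin\QQ$, take the tilted heart $\A_\beta=\langle\cF_\beta[1],\cT_\beta\rangle$ with the real charge $Z_\beta(r,d)=-d+\beta r$. Every nonzero object of $\A_\beta$ has $Z_\beta<0$, so declaring all of $\A_\beta$ semistable of phase $1$ gives a pre-stability condition on $\Z^2$. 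Stable bundles with slopes tending to $\beta$ then have $|Z_\beta(v)|/\|v\|\to0$. Enlarging the lattice by $\Pic^0$ is not an option: it is invisible numerically, and $K_0(C)$ is not of finite rank. You only looked at stability functions on $\Coh(C)$. That is the same trap you correctly spotted for $\modcat Q$: any finite-length heart with finitely many simples has automatic support, since $Z$ maps the simplex spanned by the simple classes to a compact convex set avoiding $0$.

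This example is an instance of the idea you are missing, which removes your ``main obstacle'': you never need to design $Z$ and check HN filtrations by hand. Take any genuine stability condition $\sigma=(\cP,Z)$ with semistable $E_j\in\cP(\phi_j)$, $\phi_j\to\phi_0$ and $\cP(\phi_0)=0$. Set $\A=\cP((\phi_0,\phi_0+1])$ and $W=-\Im(e^{-\pi\ii\phi_0}Z)$. Then $W(\A\setminus0)\subset\R_{<0}$, so the slicing $\cQ(1+n)=\A[n]$ with charge $W$ is a pre-stability condition; its HN filtrations are the cohomology filtrations of the bounded $t$-structure. Moreover $|W(E_j)|/\|v(E_j)\|\le\sup_{\|u\|=1}|Z(u)|\,|\sin\pi(\phi_j-\phi_0)|\to0$. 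So one only needs a stability condition whose occupied phases accumulate at an unoccupied phase:
\begin{itemize}
\item for wild quivers, use DHKK; your Coxeter-eigenvector plan never produces a heart, nor shows that the $v_k$ are semistable;
\item for positive-genus curves, use Macr\`i's classification;
\item for $\dim X\ge2$, take $\sigma_X=i^\sharp\sigma$ pulled back from $\PP^N$ and restrict further to a positive-genus curve $j:C\hookrightarrow X$. Since $j_*$ preserves semistability and phases, the occupied phases of $\sigma_X$ are dense and countable.
\end{itemize}
``Below the Bogomolov region'' and ``tilt stability with irrational parameters'' are not constructions, since tilt stability is only a weak stability condition.

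Finally, in the Euclidean ``if'' direction, $Z(\delta)\neq0$ cannot be justified by a semistable object of class $\delta$. For $\Db(\bP)\simeq\Db(K_2)$ in an exceptional chamber with non-integral $d>1$, no such object exists, and at $w_k=2\pi\ii$ one even has $Z(\OO_x)=0$. The paper uses Sun's dichotomy instead: either the heart has finite length, so support is automatic, or it contains an object of class $\pm\delta$, which forces $Z(\delta)\neq0$. It then obtains $\bP$ from $\Db(\bP)\simeq\Db(K_2)$, rather than from an unproved extension of Okada's classification to pre-stability conditions.
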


Our second main topic is the boundary behavior of $\gldim$ for curves.  For a
smooth projective curve $C$, we consider the reduced quotient
\[
  \cM_C
  :=
  \Aut\bigl(\Db(C)\bigr)\backslash
  \Stab\bigl(\Db(C)\bigr)/\C
\]
and construct compactifications to which $\gldim$ extends continuously. We call a compact Hausdorff compactification of \(\cM_C\) a
\(g\)-compactification if the global dimension function extends continuously
to its boundary (see \Cref{def:g-compactification}).

\begin{theorem}[{\Cref{pro:positive-genus curve} and \Cref{prop:p1-refinement}}]
Let \(C\) be a smooth projective curve.
\begin{enumerate}
\item If \(g(C)=1\), the standard compactified modular curve $X(1)$ is a
  \(g\)-compactification with \(\overline{\gldim}\equiv1\). 
\item If \(g(C)\ge2\), the closed disk \(\overline\cM_C\simeq\overline{\Delta^\ast}\) is a
  \(g\)-compactification where $\overline{\gldim}$ takes value \(1\) at the cusp and \(2\) on the
  outer circle.
 \item If $C=\bP$, \(\overline\cM_{\bP}^{\,g}\) is a \(g\)-compactification of $\cM_{\bP}$. Furthermore, every point on its boundary is represented by a quasi-convergent path.
\end{enumerate}
\end{theorem}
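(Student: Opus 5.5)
The plan is to treat the three cases separately. In each case I would first describe the reduced space $\cM_C$ explicitly, then build a compactification, and finally check that $\gldim$ extends continuously.

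\emph{Genus one.} By Bridgeland's description of $\Stab(\Db(C))$ for an elliptic curve, $\Stab(\Db(C))\simeq\widetilde{GL}^+(2,\R)$. The image of $\Aut(\Db(C))$ acting on central charges is $SL(2,\Z)$, so $\cM_C\simeq\HH/SL(2,\Z)$. I would take the standard compactification $X(1)=\HH/SL(2,\Z)\cup\{\infty\}$. Part (2) of \Cref{thm:intro-varieties} says $\gldim\equiv1$ on $\Stab$, so the constant extension $\overline{\gldim}\equiv1$ is continuous, and this case is done.

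\emph{Genus at least two.} By Macr\`i's classification, every stability condition is geometric up to $\widetilde{GL}^+(2,\R)$. Hence $\cM_C\simeq\HH/\langle\text{shift by }1\rangle\simeq\Delta^\ast$, with coordinate $q=e^{2\pi\ii\tau}$. Here $\Aut(\Db(C))=\Aut(C)\ltimes\Pic(C)\times\Z[1]$, and tensoring by line bundles translates $\tau$ by integers. I would then compute $\gldim$ as a function of $\tau=x+\ii y$. For a geometric stability condition, the extremal Homs run from $\OO_p$ to a stable bundle $E[1]$ via Serre duality, which is $\Ext^1(E,\OO_p)\neq0$. Their phase difference is $1+\frac1\pi\arg$ of the slope angle, and the supremum over slopes should give $\gldim=1+\frac{2}{\pi}\arctan(\cdot)$, tending to $1$ as $y\to\infty$ and to $2$ as $y\to0$. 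The KOT computation \cite[Theorem~5.16]{KOT} should give this formula directly, and I would reuse it. The formula shows $\gldim$ depends only on $|q|$. So I compactify $\Delta^\ast$ to the closed disk by adding the cusp $q=0$ and the circle $|q|=1$, and define $\overline{\gldim}$ there to be $1$ and $2$ respectively. Continuity follows from the monotone radial formula, and the closed disk is compact and Hausdorff.

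\emph{The projective line.} Here $\Stab(\Db(\bP))\simeq\C^2$ by Okada, and $\cM_{\bP}$ is a quotient of $\C$ by the Kronecker autoequivalence group. The chart is given by the exceptional pairs $(\OO(k),\OO(k+1))$, with the relative phase and mass of the pair as coordinates. I would write $\gldim$ explicitly in these algebraic (quiver-heart) charts. In the chart where the heart is $\modcat$ of the Kronecker quiver, $\gldim$ is governed by $\phi(S_2)-\phi(S_1)+1$ together with the phases of the imaginary root, which give the non-algebraic region. The compactification $\overline\cM_{\bP}^{\,g}$ is, I expect, built in the earlier section following \cite{HLJR,HLR} by adjoining limits of quasi-convergent paths. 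The task is then to show that $\gldim$ has a limit along each such path and that this limit depends only on the boundary point. The main obstacle is this last step. I would first classify boundary points into two kinds: those where the phase gap of the exceptional pair tends to $\pm\infty$ or collapses, and those where the mass ratio degenerates. For each kind I would write a quasi-convergent representative path, for instance a ray in a chart, which gives the second assertion. I would then check continuity of $\overline{\gldim}$ using the explicit chart formulas, arguing that $\gldim$ on a neighbourhood of a boundary point is controlled by finitely many stable objects, namely the exceptional pair and the point sheaves.
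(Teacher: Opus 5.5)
The genus-one case is handled exactly as in the paper. The genuine gap is in the case $C=\bP$. You expect $\overline\cM_{\bP}^{\,g}$ to be the compactification $\overline\cM_{\bP}\simeq\overline{\Delta^\ast}$ by limits of quasi-convergent paths. You then plan to show that $\gldim$ has a limit along each such path that depends only on the boundary point. That claim is false.

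Normalize the chart $X_k$ so that $\phi(\OO(k-1)[1])=1$ and $w_k=\log m(\OO(k))+\ii\pi d$. There one has $\gldim=\max\{1,d\}$ (your chart description is essentially this). The boundary of $\overline\cM_{\bP}$ has three kinds of points:
\begin{itemize}
\item a cone point $c_0$, where $d\to0$, so $\gldim\to1$;
\item generic outer points $b_\vartheta$, where $\operatorname{Im}w\to\infty$, so $\gldim\to\infty$;
\item one distinguished point $b_*$, represented by quasi-convergent paths with $w/(1+|w|)\to1$.
\end{itemize}
Along paths to $b_*$ only $\operatorname{Re}w\to+\infty$ is forced, and $d$ is free. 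The paper's paths $w_{t,s}=t+\ii\pi d_t(s)$ have $d_t(s)\to1/(1-s)$ for $s<1$ and $d_t(1)\sim\sqrt t$. They all converge to $b_*$, yet $\gldim$ tends to every value in $[1,\infty]$. So the cluster set of $\gldim$ at $b_*$ is $[1,\infty]$, and $\overline\cM_{\bP}$ is \emph{not} a $g$-compactification. Your dichotomy ``phase gap versus mass ratio'' cannot separate these limits, because they all define the same point $b_*$.

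The missing idea is to change the space. The paper takes the closure of the graph of $\rho\circ\gldim$, with $\rho(t)=1-1/t$, inside $\overline\cM_{\bP}\times[0,1]$. This space is compact Hausdorff for free. It contains $\cM_{\bP}$ as an open dense subset, since over the open set $\cM_{\bP}$ it is the graph of a continuous function. Its second coordinate is the continuous extension $\overline{\gldim}$. The real content is showing that the fibre over every $x\neq b_*$ is a single point, while the fibre over $b_*$ is $[1,\infty]$; in effect $b_*$ is replaced by an interval.

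Each new point then needs its own quasi-convergent representative, and checking that $w_{t,s}$ is quasi-convergent is more than taking ``a ray in a chart''. You need three things:
\begin{itemize}
\item the HN factors of any fixed object lie in a $t$-independent finite set of shifts of $\OO(k-1)[1]$ and $\OO(k)$;
\item $d_t(s)$ is eventually monotone, so the HN order stabilizes;
\item for limit-semistable $E$, either $\ell_t(E)=c_E+o(1)$ or $\ell_t(E)=w_{t,s}+c_E+o(1)$.
\end{itemize}

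Separately, in genus $\ge2$ your description of the extremal morphisms is wrong. For a bundle $E$ one has $\Ext^1(E,\OO_p)=0$, and the nonzero $\Hom(\OO_p,E[1])$ only has phase gap $\phi(E)<1$. The supremum instead comes from $E\to E\otimes K_C[1]=\Sfun E$, which is Serre dual to $\mathrm{id}_E$ and has gap $1+\phi(E\otimes K_C)-\phi(E)$. Serre duality, via $\Ext^1(E,F)\cong\Hom(F,E\otimes K_C)^\ast$, bounds every degree-one gap by this quantity. Maximizing over slopes, with the maximum at $\beta-\mu=g-1$, gives $1+\frac2\pi\arctan\frac{g-1}{\alpha}$. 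With that formula your extension argument goes through.
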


We also include a short algebraic discussion.  For a finite-dimensional
algebra $A$, the standard stability condition realizes the homological global
dimension when $\algldim A<\infty$, while every stability
condition has infinite global dimension when
$\algldim A=\infty$.  For homologically smooth graded gentle
algebras with nonempty stability space, \Cref{ex:surface} combines the surface description of
stability conditions with the computation of the Serre dimension in
\cite{CES} to obtain
\[
  \Sdimup\D=\Gd\D.
\]
We further record that a polarizable nonorthogonal semiorthogonal
decomposition forces the supremum of $\gldim$ to be infinite.  

Together with
\Cref{thm:intro-varieties}, these results verify equality in
\eqref{eq:intro-kot-bound} for smooth projective varieties and for the graded
gentle categories considered here.  As observed in \cite{KOT}, the
inequality is an equality in all standard computed examples. At present, no general equality theorem is known, and we are not aware of any counterexample.

\subsection{Contents}
In \Cref{sec:pre}, we recall Bridgeland stability conditions and the global dimension function.  In \Cref{sec:varieties}, we prove the reachability theorem for stability conditions on smooth projective varieties and then extend this result to pre-stability conditions. In \Cref{sec:compactify}, we study $\gldim$-compatible compactifications of reduced stability spaces of smooth projective curves.  Finally, in \Cref{sec:algebraic}, we discuss the algebraic side of the reachability problem, focusing on finite-dimensional algebras and semiorthogonal decompositions.

\subsection*{AI disclosure}
We acknowledge the use of AI in preparing this manuscript. The authors developed the overall strategy, while ChatGPT aided in some proof verification, language editing, and directed us to \cite{MeinhardtPartsch}, which enabled us to discover \Cref{ex:prestab-without-stab}. The authors take full responsibility for all mathematical claims, arguments, and conclusions, as well as for any remaining errors.

\subsection*{Acknowledgements}
We thank Chunyi Li for posing the initial question that motivated this work and for valuable comments. We are grateful to the Shanghai Center for Mathematical Sciences (SCMS) for its hospitality during the summer school where much of this research was conducted. We also thank Tianle Mao for ongoing discussions, and Zhiyu Liu, Yu Qiu, Xun Lin, Shizhuo Zhang, Yuwei Fan, and Will Donovan for helpful conversations. The first author is particularly indebted to Atsushi Takahashi for his guidance and support during his postdoctoral stay in Japan.

\section{Preliminaries}
\label{sec:pre}
In this section, we recall some basic notions of Bridgeland stability conditions, following \cite{Bridgeland}, and review the global dimension function introduced in \cite{IQ2023} and \cite{Qiu}.

\subsection{Bridgeland stability conditions}
Let $\D$ be a triangulated category and let
$K_0(\D)$ be its Grothendieck group.  Fix a free abelian group $\Lambda$ of
finite rank and a surjective class map
\[
  v:K_0(\D)\twoheadrightarrow\Lambda.
\]

\begin{definition}[{\cite[Definition 3.3]{Bridgeland}}]
A \emph{slicing} $\cP$ on $\D$ is a collection of full additive subcategories
$\cP(\phi)\subset\D$, indexed by $\phi\in\R$, satisfying:
\begin{enumerate}
\item $\cP(\phi+1)=\cP(\phi)[1]$ for every $\phi\in\R$;
\item if $\phi_1>\phi_2$, $E_i\in\cP(\phi_i)$, then
  $\Hom_{\D}(E_1,E_2)=0$;
\item every nonzero object $E\in\D$ admits a finite sequence of distinguished
  triangles 
\[0 =
\xymatrix @C=5mm{
 E_0 \ar[rr]   &&  E_1 \ar[dl] \ar[rr] && E_2 \ar[dl]
 \ar[r] & \dots  \ar[r] & E_{m-1} \ar[rr] && E_m \ar[dl] \\
& A_1 \ar@{-->}[ul] && A_2 \ar@{-->}[ul] &&&& A_m \ar@{-->}[ul]
}
= E
\]
 such that each nonzero factor
    $A_i\in\cP(\phi_i)$ with real numbers $\phi_1>\phi_2>\cdots>\phi_m$.
\end{enumerate}
The sequence in the last condition is called the \emph{Harder--Narasimhan
filtration} (\emph{HN-filtration}) of $E$, and the $A_i$ are its HN factors.
\end{definition}

For $0\neq E\in\D$, we denote the largest and smallest phases
of its HN factors by
\[
  \phi^+_{\cP}(E)=\phi_1,
  \qquad
  \phi^-_{\cP}(E)=\phi_m.
\]
For an interval $I\subset\R$, let $\cP(I)$ be the extension-closed
subcategory generated by $\cP(\phi)$ for $\phi\in I$.  In particular,
$\A_{\cP}:=\cP((0,1])$ is the heart of a bounded $t$-structure. Given two slicings $\cP_1$ and $\cP_2$ on $\D$, their distance is defined as
\[
d(\cP_1,\cP_2)
:=
\sup_{0\neq E\in\D}
\left\{
\left|\phi^+_{\cP_1}(E)-\phi^+_{\cP_2}(E)\right|,
\left|\phi^-_{\cP_1}(E)-\phi^-_{\cP_2}(E)\right|
\right\}.
\]

\begin{definition}[{\cite[Definition 1.1]{Bridgeland}}]
A \emph{Bridgeland pre-stability condition} on $\D$ with respect to
$(\Lambda,v)$ is a pair $\sigma=(\cP,Z)$ consisting of a group homomorphism
$Z:\Lambda\to\C$ and a slicing $\cP$ such that, for every
$0\neq E\in\cP(\phi)$,
\[
  Z(v(E))=m(E)e^{\pi\ii\phi}
  \qquad\text{for some }m(E)>0.
\]
A nonzero object $E$ is called $\sigma$\emph{-semistable} of phase $\phi$ if \(E\in\cP(\phi)\). It is called $\sigma$\emph{-stable} if it is simple in $\cP(\phi)$. A pre-stability condition $\sigma=(\cP,Z)$ is called locally finite if for any $\phi$, there exists an
$\epsilon>0$ such that the quasi-abelian category $\cP(\phi-\epsilon,\phi+\epsilon)$ is finite length. We say a pre-stability condition $\sigma=(\cP,Z)$ satisfies the \emph{support property} if, for any
norm $\lVert\cdot\rVert$ on $\Lambda_{\R}$, there exists a constant $C>0$ such that
\[
  \lVert v(E)\rVert\leq C\lvert Z(v(E))\rvert
\]
for every semistable object $E$. A pre-stability condition satisfying
the support property is called a \emph{stability condition}. 
\end{definition}
Denote the set of all stability conditions with respect to $(\Lambda,v)$ by $\Stab_\Lambda(\D)$. When the support lattice is clear from the context, we simply write $\Stab(\D)$. We also abbreviate $Z(v(E))$ to $Z(E)$ and write
$\phi^\pm_\sigma(E):=\phi^\pm_{\cP}(E)$ and
$\A_\sigma:=\cP((0,1])$.  Equivalently, a stability condition can be described by a heart of a bounded $t$-structure with a stability function satisfying
the Harder--Narasimhan property and the support property
\cite[Proposition~5.3]{Bridgeland}.

For a smooth projective variety $X$, we write $K_{\rm num}(X)$ for the
numerical Grothendieck group of $\Db(X)$. Stability conditions on $\Db(X)$ will always be considered with respect to a
finite-rank support lattice $\Lambda$ of numerical classes, together with its class map
$v$.  Usually \(\Lambda\) is either the full numerical Grothendieck group \(K_{\rm num}(X)\) or a finite-rank lattice naturally determined by the construction under consideration. We write \(\Stab_\Lambda(X)\) when the lattice should be specified, and simply write \(\Stab(X)\) when it is clear from the context.

The set $\Stab(\D)$ carries a natural topology induced by the following generalized metric. For stability conditions $\sigma_1=(\cP_1,Z_1)$ and $\sigma_2=(\cP_2,Z_2)$, their distance is defined by
\[
d(\sigma_1,\sigma_2)
:=
\sup_{0\ne E\in\D}
\left\{
  |\phi^-_{\sigma_2}(E)-\phi^-_{\sigma_1}(E)|,
  |\phi^+_{\sigma_2}(E)-\phi^+_{\sigma_1}(E)|,
  \left|\log\frac{m_{\sigma_2}(E)}{m_{\sigma_1}(E)}\right|
\right\}.
\]
\begin{theorem}[{\cite[Theorem 1.2]{Bridgeland}}]
\label{thm:deformation}
The space of stability conditions $\Stab(\D)$ has the structure of a complex manifold, and the map
\[
\Stab(\D)\to\Hom_{\Z}(\Lambda,\C)
\]
that sends a stability condition to its central charge is a local isomorphism.
\end{theorem}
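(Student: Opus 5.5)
The plan follows Bridgeland's original strategy, rephrased via the support property in the manner of Kontsevich--Soibelman and Bayer. Write $\pi\colon\Stab(\D)\to\Hom_{\Z}(\Lambda,\C)$ for the central charge map. It suffices to show that for every $\sigma=(\cP,Z)$ there is an open neighbourhood $U\ni\sigma$ such that $\pi|_U$ is a homeomorphism onto an open subset of $\Hom_{\Z}(\Lambda,\C)\cong\C^{\rk\Lambda}$. The complex structure is then obtained by pulling back the linear one, since transition maps are identities on $\Hom_{\Z}(\Lambda,\C)$.

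First, I establish continuity of $\pi$ with respect to the generalized metric $d$. Semistable objects with close phases and close masses have close central charges, and the support property makes this uniform by bounding $\lVert v(E)\rVert$ by $C\lvert Z(E)\rvert$.

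Second, I prove local injectivity. If $\sigma=(\cP,Z)$ and $\tau=(\cQ,Z)$ have the same central charge and $d(\cP,\cQ)<1$, then $\cP=\cQ$. The argument: take $E\in\cQ(\phi)$ and compare its $\cP$-HN filtration. All $\cP$-phases lie in $(\phi-1,\phi+1)$, and $Z(E)$ must be on the ray $e^{\pi\ii\phi}\R_{>0}$. Using the Hom-vanishing axioms in both slicings, one shows that the extreme $\cP$-factors cannot have phase different from $\phi$. Hence $\cQ(\phi)\subseteq\cP(\phi)$, and by symmetry the two are equal.

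Third, and this is the main step, I prove local surjectivity. Fix the norm $\lVert W\rVert_\sigma=\sup\{\lvert W(E)\rvert/\lvert Z(E)\rvert : E\ \sigma\text{-semistable}\}$, which is finite exactly because of the support property. Given $W$ with $\lVert W-Z\rVert_\sigma<\sin(\pi\epsilon)$ for small $\epsilon$, I construct a slicing $\cQ$ with central charge $W$ and $d(\cP,\cQ)<\epsilon$. The steps are as follows.

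\begin{enumerate}
\item For each thin interval $(a,b)$ with $b-a<1-2\epsilon$, the category $\cP((a,b))$ is quasi-abelian, and $W$ maps it into a half-plane.
\item Define $\cQ(\psi)$ as the objects of $\cP((\psi-\epsilon,\psi+\epsilon))$ that are $W$-semistable of phase $\psi$ inside the enveloping thin category $\cP((\psi-\eta,\psi+\eta))$. One must check this is independent of the choice of $\eta$.
\item Verify the shift axiom and Hom-vanishing; both are essentially formal.
\end{enumerate}

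The hard part will be existence of HN filtrations for $W$ in the thin quasi-abelian categories. The approach I would try first is to use the support property to show that $\sigma$ is locally finite. By the support property, objects in a thin slice with bounded $\lvert Z\rvert$ have classes in a finite subset of $\Lambda$, so chains of strict subobjects with decreasing (or increasing) $W$-phase must terminate. This lets me run Bridgeland's maximal destabilizing quotient argument. I would then glue the thin HN filtrations with the $\cP$-HN filtration of an arbitrary object to get a global $\cQ$-HN filtration. Finally, I check that $(\cQ,W)$ again satisfies the support property, with constant controlled by $\lVert W-Z\rVert_\sigma$. This shows that a whole ball around $Z$ lies in the image of $\pi$, and combined with the second step gives the required local homeomorphism.
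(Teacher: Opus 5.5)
The paper does not prove this statement itself; it quotes it from Bridgeland. Your outline is correct and is essentially that cited argument in its support-property form (as in Kontsevich--Soibelman and Bayer): continuity of the central charge map, local injectivity because equal central charges and slicing distance $<1$ force equality, and local surjectivity by building $W$-HN filtrations in thin quasi-abelian slices $\cP((a,b))$ and gluing them, with the support property making $\lVert\cdot\rVert_\sigma$ a genuine norm and forcing monotone-phase chains of strict subobjects to terminate.
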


Finally, we recall two standard group actions on $\Stab(\D)$. Let $\Phi$ be an exact autoequivalence of $\D$, and denote by $\Phi_{\ast}\colon K_0(\D)\to K_0(\D)$ the induced isomorphism on the Grothendieck group. For a stability condition $\sigma=(\cP,Z)$ on $\D$, we define the left action of $\Phi$ on $\sigma$ by 
\[
\Phi\cdot\sigma\coloneq(\Phi\circ\cP,Z\circ\Phi_{\ast}^{-1}).
\]
Let $\widetilde{\mathrm{GL}}_2^+(\R)$ be the universal cover of $\mathrm{GL}_2^+(\R)$. For any element $g=(T,f)\in\widetilde{\mathrm{GL}}_2^+(\R)$, the right $\widetilde{\mathrm{GL}}_2^+(\R)$-action is defined by
\[
\sigma[g]=(\cP[g],Z[g]), \quad Z[g]=T^{-1}Z,\quad \cP[g](\phi)=\cP(f(\phi)).
\]
In particular, for $z=a+\ii b\in\C$, the right $\C$-action on $\sigma$ is defined by
\[
  (\cP,Z)\cdot z
  =
  \bigl(\cP_z,e^{-\pi\ii z}Z\bigr),
  \qquad
  \cP_z(\phi)=\cP(\phi+a).
\]

\subsection{The global dimension function}
\begin{definition}
Let $\cP$ be a slicing on a triangulated category $\D$. Define the \emph{global dimension} of $\cP$ by 
\[
  \gldim(\cP)
  :=
  \sup\bigl\{
    \phi_2-\phi_1
    \bigm|
    \Hom_{\D}\bigl(\cP(\phi_1),\cP(\phi_2)\bigr)\neq0
  \bigr\}
  \in[0,+\infty].
\]
For a stability condition $\sigma=(\cP,Z)$ on $\D$, set
$\gldim(\sigma):=\gldim(\cP)$.
\end{definition}
Thus \(\gldim(\sigma)\) measures the largest phase gap across which a nonzero morphism between semistable objects can occur.
The function $\gldim:\Stab_\Lambda(\D)\to[0,+\infty]$ is continuous and is
invariant under exact autoequivalences and the $\C$-action
\cite[Section~3.1]{Qiu}. Hence it descends to a continuous function on the reduced quotient:
\[
\gldim\colon \Aut(\D)\backslash\Stab(\D)/\C\to[0,\infty].
\]
Assume $\Stab_\Lambda(\D)\neq\varnothing$. We define
\[
  \Gd\D
  :=
  \inf_{\sigma\in\Stab(\D)}\gldim(\sigma),
  \qquad
  \overline{\Gd}\D
  :=
  \sup_{\sigma\in\Stab(\D)}\gldim(\sigma).
\]
The value $\Gd\D$ is called \emph{reachable} if there exists a stability condition $\sigma\in\Stab(\D)$ such that $\gldim(\sigma)=\Gd\D$. Similarly, the supremum \(\overline{\Gd}\D\) is reachable if it is attained by some stability condition.

Let \(\D\) be a smooth proper triangulated category with Serre functor \(\Sfun\), and let \(G\) be a split-generator. For objects \(E,F\in\D\), set
\[
  e_-(E,F)
  :=
  \inf\bigl\{k\in\Z\bigm|\Hom_{\D}(E,F[k])\neq0\bigr\}.
\]
Following \cite{ElaginLunts,KOT}, the \emph{upper Serre dimension} is defined by
\[
  \Sdimup(\D)
  :=
  \limsup_{m\to+\infty}
  \frac{-e_-(G,\Sfun^mG)}{m},
\]
which is independent of the choice of $G$.

\begin{theorem}[{\cite[Theorem~4.2]{KOT}}]
\label{thm:kot-lower-bound}
Let $\D$ be a triangulated category equivalent to a perfect derived
category of a smooth proper dg $\C$-algebra.
If $\Stab(\D)\neq\varnothing$, then
\[
  \Sdimup\D
  \leq
  \Gd\D.
\]
\end{theorem}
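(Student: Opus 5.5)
The plan is to prove the inequality one stability condition at a time: for a fixed $\sigma=(\cP,Z)\in\Stab(\D)$ I will show $\Sdimup\D\le\gldim(\sigma)$, and then take the infimum over $\sigma$. If $\gldim(\sigma)=+\infty$ there is nothing to prove, so I assume $d:=\gldim(\sigma)<\infty$. The smooth proper hypothesis is used only to guarantee a Serre functor $\Sfun$ and a split-generator $G$, so that $\Sdimup\D$ makes sense. The bound needs neither the central charge nor the support property; only the slicing enters.

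\textbf{Step 1: the Serre functor raises phases by at most $d$.} For every nonzero $E\in\D$ I claim $\phi^+_\sigma(\Sfun E)\le\phi^+_\sigma(E)+d$.
\begin{itemize}
\item Let $A$ be the first HN factor of $\Sfun E$, so $A\in\cP(\phi^+_\sigma(\Sfun E))$ and there is a nonzero morphism $A\to\Sfun E$.
\item By Serre duality, $\Hom(A,\Sfun E)\cong\Hom(E,A)^\ast$, hence $\Hom(E,A)\neq0$.
\item Filtering $E$ by its HN filtration and applying the long exact sequences of $\Hom(-,A)$, some HN factor $E_i\in\cP(\phi_i)$ of $E$ satisfies $\Hom(E_i,A)\neq0$.
\item The definition of $\gldim$ then gives $\phi^+_\sigma(\Sfun E)-\phi_i\le d$, and $\phi_i\le\phi^+_\sigma(E)$.
\end{itemize}
Iterating the claim gives $\phi^+_\sigma(\Sfun^mG)\le\phi^+_\sigma(G)+md$ for all $m\ge1$.

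\textbf{Step 2: compare with $e_-$.} Since $G$ and $\Sfun^mG$ are nonzero and $G$ is a split-generator, $\Hom(G,\Sfun^mG[k])\neq0$ for some $k$. Properness gives only finitely many such $k$, so $k_m:=e_-(G,\Sfun^mG)$ is finite and attained. Decomposing both $G$ and $\Sfun^mG[k_m]$ by their HN filtrations yields HN factors $G_i$ and $F_j[k_m]$ with $\Hom(G_i,F_j[k_m])\neq0$. The Hom-vanishing axiom of a slicing then forces
\[
\phi^-_\sigma(G)\le\phi(G_i)\le\phi(F_j)+k_m\le\phi^+_\sigma(\Sfun^mG)+k_m .
\]
Combining this with Step 1 gives
\[
-k_m\le md+\phi^+_\sigma(G)-\phi^-_\sigma(G).
\]

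\textbf{Step 3: conclude.} Divide by $m$ and let $m\to\infty$. The constant $\phi^+_\sigma(G)-\phi^-_\sigma(G)$ does not depend on $m$, so $\limsup_m(-k_m)/m\le d$, that is, $\Sdimup\D\le\gldim(\sigma)$. Taking the infimum over $\sigma\in\Stab(\D)$ proves the theorem.

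The only delicate point is Step 1. One must pass from a nonzero map out of a non-semistable object $E$ to a nonzero map out of a single HN factor. This follows from the HN filtration by a standard dévissage, but it is the one step where the definition of $\gldim$, which is phrased only for semistable objects, has to be extended to arbitrary objects. The rest is bookkeeping of phases.
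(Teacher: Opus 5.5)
Your proof is correct and follows essentially the same route as the cited Kikuta--Ouchi--Takahashi argument that the paper relies on: Step 1 is exactly \Cref{lem:kot}, and Steps 2--3 iterate it and bound $-e_-(G,\Sfun^mG)$ above by $m\gldim(\sigma)+\phi^+_\sigma(G)-\phi^-_\sigma(G)$. Your remark that only the slicing, finite HN filtrations and Serre duality are used is also the observation the paper invokes to extend the bound to pre-stability conditions.
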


If $X$ is smooth projective of dimension $n$, then
\[
  \Sfun_X(E)=E\otimes K_X[n],
  \qquad
  \Sdimup\bigl(\Db(X)\bigr)=n
\]
\cite[Example~2.5]{KOT}. The remaining problem is to determine when this inequality is in fact an equality.

\section{Reachability on smooth projective varieties}
\label{sec:varieties}
In this section, we study the reachability problem for smooth projective
varieties. We first recall phase order and pullback and
pushforward constructions for stability conditions \cite{Li2025,Li2026}. We
then prove the Bayer property for ample classes and use it to establish the reachability theorem for smooth projective varieties. Finally, we explain how the same argument extends from stability conditions to pre-stability conditions.

\subsection{Phase order and induced stability conditions}
In this section, we recall some notions from \cite{Li2025,Li2026}. Let \(\sigma\) and
\(\tau\) be two pre-stability conditions on a triangulated category $\D$. Define relations on pre-stability conditions as follows:
\[
\begin{aligned}
  \sigma\lesssim\tau
  &\Longleftrightarrow
  \cP_\sigma(\phi)\subset\cP_\tau((-\infty,\phi))
  \quad\text{for every }\phi\in\R,
  \label{eq:strict-phase-order}\\
  \sigma\lessapprox\tau
  &\Longleftrightarrow
  \cP_\sigma(\phi)\subset\cP_\tau((-\infty,\phi])
  \quad\text{for every }\phi\in\R.
\end{aligned}
\]
Note that these relations are transitive, i.e. $\sigma_1\lessapprox\sigma_2$ and $\sigma_2\lessapprox\sigma_3$ imply $\sigma_1\lessapprox\sigma_3$.

\begin{lemma}[{\cite[Lemma 4.11]{Li2025}}]
\label{lem:phase-order-tests}
Let $\sigma,\tau\in\Stab(\D)$ and $\Phi\in\Aut(\D)$. Then the following conditions are equivalent:
\begin{enumerate}
  \item \(\sigma\lessapprox\tau\);
  \item for every \(\sigma\)-stable object \(E\), $\phi^+_\tau(E)\le\phi_\sigma(E)$;
  \item for every \(\tau\)-stable object \(F\),
  $
    \phi_\tau(F)\le\phi^-_\sigma(F)$;
    \item $\Phi(\sigma)\lessapprox\Phi(\tau)$.
\end{enumerate}
All statements above hold for $\lesssim$ by replacing $\le$ with $<$.
\end{lemma}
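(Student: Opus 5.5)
The plan is to prove $(1)\Rightarrow(2)\Rightarrow(1)$, then $(1)\Leftrightarrow(3)$, then $(1)\Leftrightarrow(4)$. Each step uses only the Hom-vanishing axiom of slicings, extension-closedness of $\cP_\tau((-\infty,\phi])$, and the existence of Jordan--Hölder filtrations in each $\cP(\phi)$. Since $\sigma$ and $\tau$ satisfy the support property, they are locally finite, so every $\cP_\sigma(\phi)$ and $\cP_\tau(\phi)$ is of finite length. Hence every semistable object has a finite filtration by stable objects of the same phase.

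For $(1)\Rightarrow(2)$: a $\sigma$-stable $E$ of phase $\phi$ lies in $\cP_\sigma(\phi)\subset\cP_\tau((-\infty,\phi])$, so $\phi^+_\tau(E)\le\phi$. For $(2)\Rightarrow(1)$: take $0\neq E\in\cP_\sigma(\phi)$ and a Jordan--Hölder filtration of $E$ in $\cP_\sigma(\phi)$. Its factors are $\sigma$-stable of phase $\phi$, so by (2) they lie in $\cP_\tau((-\infty,\phi])$. This subcategory is extension closed, so $E$ lies in it as well.

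For $(1)\Rightarrow(3)$: let $F$ be $\tau$-stable of phase $\psi$ and suppose $\phi^-_\sigma(F)=\phi<\psi$. The last $\sigma$-HN factor $B\in\cP_\sigma(\phi)$ receives a nonzero map $F\to B$. By (1), $B\in\cP_\tau((-\infty,\phi])$ with $\phi<\psi$, so $\Hom(F,B)=0$ by the slicing axiom, a contradiction.

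For $(3)\Rightarrow(1)$: let $E\in\cP_\sigma(\phi)$ and suppose $\psi:=\phi^+_\tau(E)>\phi$. The first $\tau$-HN factor $A\in\cP_\tau(\psi)$ comes with a nonzero map $A\to E$. Filter $A$ by $\tau$-stable factors of phase $\psi$; inducting on the length of this filtration gives a $\tau$-stable object $F$ of phase $\psi$ with a nonzero map $F\to E$. Concretely, if the first factor maps to zero, the map factors through the quotient, which has shorter length. By (3), $\phi^-_\sigma(F)\ge\psi>\phi$, so $F\in\cP_\sigma((\phi,\infty))$ and $\Hom(F,E)=0$, a contradiction.

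For (4): the slicing of $\Phi(\sigma)$ is $\phi\mapsto\Phi(\cP_\sigma(\phi))$, and $\Phi$ commutes with extension closures. Hence $\cP_\sigma(\phi)\subset\cP_\tau((-\infty,\phi])$ holds iff $\Phi\cP_\sigma(\phi)\subset\Phi\cP_\tau((-\infty,\phi])$. The converse direction follows by applying $\Phi^{-1}$.

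The strict versions go through verbatim with $\le$ replaced by $<$ and $(-\infty,\phi]$ by $(-\infty,\phi)$; each contradiction argument only uses the resulting strict inequalities of phases. The only delicate point is the reduction from semistable to stable objects, namely the existence of a stable subobject admitting a nonzero map in $(3)\Rightarrow(1)$. This is where finite length of $\cP_\tau(\psi)$, guaranteed by the support property, is essential.
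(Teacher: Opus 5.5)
Your proof is correct. The paper gives no argument of its own here: it imports the statement from \cite[Lemma 4.11]{Li2025}. Your direct verification, using Hom-vanishing, extension-closedness of $\cP_\tau((-\infty,\phi])$, and Jordan--H\"older filtrations in the finite-length categories $\cP(\phi)$, is the standard self-contained proof. As you note, finite length of $\cP(\phi)$ (via the support property) is needed only for $(2)\Rightarrow(1)$ and $(3)\Rightarrow(1)$; the implications $(1)\Rightarrow(2)$, $(1)\Rightarrow(3)$ and $(1)\Leftrightarrow(4)$ hold for arbitrary slicings.
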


\begin{definition}[{\cite[Definition 3.1]{Li2026}}]
Let \(f:Y\to X\) be a finite morphism between smooth projective varieties. Let \(\sigma_Y=(\cP,Z)\) be a pre-stability condition on $\Db(Y)$. The \emph{pushforward} of $\sigma_Y$ is defined as
\[
  f_\sharp\sigma_Y=(f_\sharp\cP,f_\sharp Z),
\]
where
\[
\begin{aligned}
  f_\sharp\cP(\phi)
  &:=
  \{E\in\Db(X)\mid f^*E\in\cP(\phi)\},
  \label{eq:def-f-sharp-slicing}\\
  f_\sharp Z&:=
  Z\circ f^*\colon K_{\rm num}(X)\xrightarrow[]{f^{\ast}}K_{\rm num}(Y)\to\C.
\end{aligned}
\]
On the other hand, given a pre-stability condition $\sigma_X=(\cP,Z)$ on $\Db(X)$, we define its \emph{pullback} to $\Db(Y)$ by
\[
f^{\sharp}\sigma_X\coloneq (f^{\sharp}\cP,f^\sharp Z),
\]
where
\begin{align*}
f^\sharp\cP(\theta)&\coloneq\{E\in \Db(Y)\mid f_{\ast}E\in\cP(\theta)\},\\
f^{\sharp}Z&\coloneq Z\circ f_{\ast}\colon K_{\rm num}(Y)\xrightarrow[]{f_{\ast}}K_{\rm num}(X)\to\C.
\end{align*}
\end{definition}

\subsection{Bayer property for ample classes}
Let
\[
  E^n=E_1\times\cdots\times E_n,
\]
be the product of $n$ smooth elliptic curves with $E_i\cong E$. For each $1\le i\le n$, fix a point $q_i\in E_i$ and define
\[
H_i\coloneq E_1\times\cdots\times E_{i-1}\times\{q_i\}\times E_{i+1}\times\cdots \times E_n\subset E^n.
\]
Set
\[
H\coloneq H_1+\cdots+H_n.
\]
\begin{theorem}[{\cite[Theorem 5.9]{Liu+2021+135+157},\cite[Theorem 4.5]{li2025stabilityconditionsproductscurves}}]
\label{thm:stab on En}
For every
\((a,b)\in\QQ_{>0}\times\QQ\), there is a unique stability
condition
\[
  \sigma^{a,b}=(\cP^{a,b},Z^{a,b})\in\Stab(E^n)
\]
with
\[
  Z^{a,b}(F)
  =
  -\int_{E^n}e^{-(b+\ii a)H}\ch(F),
  \label{eq:elliptic-central-charge}
\]
and $\phi_{\sigma^{a,b}}(\OO_p)=1$ for any $p\in E^n$.
\end{theorem}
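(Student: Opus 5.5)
The plan is to treat existence and uniqueness separately. Existence is essentially a matter of assembling the constructions of \cite{Liu+2021+135+157,li2025stabilityconditionsproductscurves}. Uniqueness rests on the rigidity of stability conditions with a fixed central charge: two stability conditions with the same central charge whose slicings are at distance $<1$ coincide \cite[Lemma~6.4]{Bridgeland}.

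\emph{Existence.} I would argue by induction on $n$. For $n=1$, $\sigma^{a,b}$ is the $\widetilde{\mathrm{GL}}_2^+(\R)$-translate of slope stability on $\Coh(E)$. The normalization is chosen so that $\OO_p$ has phase $1$.

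For the inductive step I would invoke Liu's product construction \cite[Theorem~5.9]{Liu+2021+135+157}. Given a stability condition $\sigma'$ on $E^{n-1}$ and slope stability on the last factor $E_n$, it produces a stability condition on $E^{n-1}\times E_n$. The heart is obtained by tilting the heart built from $\sigma'$-families over $E_n$, and the central charge is a polynomial combination of the central charges of the two factors. Because $H=H'+H_n$ and $e^{-(b+\ii a)H}=e^{-(b+\ii a)H'}e^{-(b+\ii a)H_n}$, the central charge $Z^{a,b}$ factors accordingly, and it is of the form produced by the construction. In the form given in \cite[Theorem~4.5]{li2025stabilityconditionsproductscurves}, this applies for every rational $(a,b)$ with $a>0$, and the support property is supplied there.

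It then remains to check that skyscraper sheaves have phase $1$. For $p=(p',p_n)$ we have $\OO_p=\OO_{p'}\boxtimes\OO_{p_n}$. Since $\OO_{p'}$ is $\sigma'$-stable of phase $1$ and $\OO_{p_n}$ is stable of phase $1$ on $E_n$, the product construction makes $\OO_p$ stable, and its phase is $1$ because $Z^{a,b}(\OO_p)=-1$.

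\emph{Uniqueness.} Let $\tau=(\cQ,Z^{a,b})$ be another stability condition in which every $\OO_p$ has phase $1$. First I would show that all $\OO_p$ are in fact $\tau$-stable. Their class is primitive and they form a family with no nontrivial subobjects of the same phase; alternatively, a Jordan--H\"older factor would have to be an object supported at $p$ with class a fraction of $[\OO_p]$, which is impossible.

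Next I would bound the phases of $\tau$-stable objects in terms of their cohomology sheaves, in the standard way for geometric stability conditions. Let $F$ be $\tau$-stable of phase $\phi\in(0,1]$. A nonzero map $F\to\OO_p[k]$ forces $\phi\le 1+k$. A nonzero map $\OO_p\to F[j]$, or by Serre duality $F\to\OO_p[n-j]$, forces $\phi\ge 1-j$. Hence every $\tau$-stable object of phase in $(0,1]$ is a complex with cohomology in a fixed window $[-n+1,0]$, and the same holds for $\sigma^{a,b}$.

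From this I want to deduce $d(\cP^{a,b},\cQ)<1$ via the tests of \Cref{lem:phase-order-tests}, perhaps after a small deformation in $(a,b)$. Concretely, the aim is to prove $\sigma^{a,b}[\epsilon]\lessapprox\tau$ and $\tau[\epsilon]\lessapprox\sigma^{a,b}$ for shifts $\epsilon<1$.

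\emph{Main obstacle.} The window $[-n+1,0]$ alone is far too coarse to give a distance $<1$ when $n\ge2$. I expect to need more refined input: for example, that both hearts contain the same torsion sheaves and the same $\mu_H$-stable vector bundles of the relevant slopes, as dictated by the identical central charge. For the fallback I would use a continuity argument. The locus in $\Stab(E^n)$ where all skyscrapers are stable of phase $1$ is open, and it is also closed along the fibre over the central-charge map. Connecting $\tau$ to $\sigma^{a,b}$ by a path of central charges, the local homeomorphism of \Cref{thm:deformation} would then yield $\tau=\sigma^{a,b}$. The delicate point in this fallback is controlling the fibres of the central-charge map, and that is where I expect most of the work to be.
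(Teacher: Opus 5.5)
The paper does not prove this statement; both existence and uniqueness are imported from \cite[Theorem~5.9]{Liu+2021+135+157} and \cite[Theorem~4.5]{li2025stabilityconditionsproductscurves}. Your existence paragraph amounts to the same citation. The claims that $Z^{a,b}$ is literally the output of Liu's product construction, and that this construction keeps $\OO_p$ stable, are asserted rather than checked.

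The real content of your proposal is therefore uniqueness, and there it has a genuine gap: nothing you write shows $d(\cP^{a,b},\cQ)<1$ or otherwise identifies $\cQ$ with $\cP^{a,b}$. You concede that the window $[-n+1,0]$ is too coarse, and the fallback does not repair this.
\begin{itemize}
\item Since $\tau$ and $\sigma^{a,b}$ have the same central charge, no nontrivial path of central charges joins them.
\item Since $\Stab(E^n)\to\Hom(\Lambda,\C)$ is a local homeomorphism (\Cref{thm:deformation}), its fibres are discrete. Being open and closed along the fibre is therefore automatic and forces nothing.
\item A genuine path-lifting argument would have to move the charge (say along $Z^{a_t,b}$ into a large-volume regime), lift that path through both $\tau$ and $\sigma^{a,b}$, and invoke uniqueness at the far end. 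But the central charge map is not a covering. Lifting through $\tau$ along a whole path is guaranteed only if $\tau$ satisfies the support property with respect to a quadratic form that is negative definite on every $\ker Z^{a_t,b}$, and you have no control over $\tau$'s form.
\item The uniqueness needed at the endpoint is again an instance of the original problem.
\end{itemize}
What is missing is a mechanism that pins down the slicing from $Z^{a,b}$ together with the normalization $\phi(\OO_p)=1$. For surfaces this is Bridgeland's description of geometric hearts as tilts of $\Coh$. For $E^n$ it is precisely the input the paper takes from the cited theorem and later relies on, e.g.\ when identifying $\pi_q^\sharp\sigma^{a,b}$ with $\sigma^{q^2a,q^2b}$ in \Cref{lem:1}.

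Your preliminary step that every $\OO_p$ is $\tau$-stable is also not justified as written. Primitivity of $[\OO_p]$ does not rule out a strictly semistable decomposition, and Jordan--H\"older factors need not be supported at $p$. On $\bP$ at $d=1$ in \Cref{thm:okada-classification}, $[\OO_p]$ is primitive and $\phi(\OO_p)=1$, yet $\OO_p$ is strictly semistable with factors $\OO$ and $\OO(-1)[1]$, neither supported at $p$. On $E^n$ the conclusion does hold, but it requires real input, for instance the theorem of Fu--Li--Zhao that every stability condition on a variety with finite Albanese morphism is geometric. Even granting it, knowing that $\tau$ is geometric with the same charge is exactly the situation in which uniqueness still has to be proved.
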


\begin{lemma}
\label{lem:1}
    Let \(E^n\) be the product of elliptic curves. For every \(c \in \QQ_{>0}\) and $(a,b)\in\QQ_{>0}\times\QQ$, the stability condition \(\sigma^{a, b}\) satisfies
    \[\sigma^{a, b} \lessapprox \sigma^{a, b + c}.\]
\end{lemma}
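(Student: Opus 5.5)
We want to show $\sigma^{a,b}\lessapprox\sigma^{a,b+c}$. The plan is to realize the shift $b\mapsto b+c$ as an autoequivalence-type operation and reduce to a phase comparison via \Cref{lem:phase-order-tests}. The most direct route is criterion (2) of that lemma: for every $\sigma^{a,b}$-stable object $F$, we must show $\phi^+_{\sigma^{a,b+c}}(F)\le\phi_{\sigma^{a,b}}(F)$.

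A first idea is to use the isogeny structure. Since $c\in\QQ_{>0}$, write $c=p/q$. Pull back along the multiplication-by-$q$ map $[q]\colon E^n\to E^n$, which is a finite étale morphism. Under $[q]^*$ the class $H$ scales numerically by $q^2$. Using the pullback and pushforward constructions of \cite{Li2026} together with the uniqueness in \Cref{thm:stab on En}, we can identify $[q]^\sharp$ or $[q]_\sharp$ of $\sigma^{a,b}$ with some $\sigma^{a',b'}$ up to the $\widetilde{\mathrm{GL}}^+_2(\R)$-action. Point sheaves stay of phase $1$, and the central charges match after rescaling. Phase order is preserved by these operations, and item (4) of \Cref{lem:phase-order-tests} handles autoequivalences. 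This reduces the claim to integral $c$, and by transitivity of $\lessapprox$ to $c=1$.

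For $c=1$, let $L=\OO(H)$. Tensoring by $L^{-1}$ sends $\sigma^{a,b}$ to a stability condition whose central charge is $-\int e^{-(b+1+\ii a)H}\ch(\cdot)$ and whose point sheaves have phase $1$. By uniqueness this is exactly $\sigma^{a,b+1}$, so $\sigma^{a,b+1}=(\,\cdot\otimes L)^{-1}\cdot\sigma^{a,b}$ in the appropriate sense. The claim therefore becomes the Bayer-type statement: for each $\sigma^{a,b}$-stable $F$ we need $\phi^+_{\sigma^{a,b}}(F\otimes L^{-1})\le\phi_{\sigma^{a,b}}(F)$, or equivalently, using (3), $\phi_{\sigma^{a,b}}(G)\le\phi^-_{\sigma^{a,b}}(G\otimes L)$ for stable $G$.

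To prove this, take the first HN factor $A$ of $F\otimes L^{-1}$, which comes with a nonzero map $A\to F\otimes L^{-1}$. Since $H$ is ample, for a point $p$ we have a nonzero map $F\otimes L^{-1}\to F$ given by a section of $L$. We would like to argue via the section $s$ of $L$ vanishing on a general translate of $H$: the composite $A\otimes L\to F\otimes L$ relates phases through the triangle $F\otimes L^{-1}\to F\to F|_{H'}$. Here we can use that the restriction to an ample divisor $H'$ has phases controlled by the induced structure on $H'$, itself a product of elliptic curves. This suggests an induction on $n$, with base case $n=1$ handled by the explicit description of $\Stab(E)$, where tensoring by a positive degree line bundle increases slopes.

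The main obstacle is making this last step rigorous: showing that $\phi^-$ of $F\otimes L$ is at least $\phi(F)$, that is, that ample twisting never decreases phases. My first attempt would use the translation action of $E^n$ on itself, under which $\sigma^{a,b}$ is invariant by uniqueness, so every stable object can be averaged over translates of $H$. I would then try to show that $F\to F\otimes L$ is injective in a suitable heart $\cP^{a,b}((\phi-1,\phi])$ with cokernel $F\otimes L|_{H}$ lying in $\cP((\phi-1,\phi+\text{small}])$. If this fails, the fallback is the explicit description of hearts in \cite{li2025stabilityconditionsproductscurves} as iterated tilts, checking the tilt conditions for the twist directly.
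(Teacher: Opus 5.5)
\textbf{There is a genuine gap: the integral case is never proved.}

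Your reduction from rational to integral $c$ is the paper's argument. With $c=p/q$ one has, exactly and with no $\widetilde{\mathrm{GL}}_2^+(\R)$-correction, $\pi_q^\sharp\sigma^{a,b}=\sigma^{q^2a,q^2b}$ and $\pi_q^\sharp\sigma^{a,b+c}=\sigma^{q^2a,q^2b+pq}$. You transfer back because $(\pi_q)_\sharp$ preserves $\lessapprox$ and $(\pi_q)_\sharp\pi_q^\sharp$ is the identity on slicings up to positive rescaling of the central charge.

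What remains is $\sigma^{a,b}\lessapprox\sigma^{a,b+1}$, i.e.\ $\phi^-_{\sigma^{a,b}}(G\otimes\OO(H))\ge\phi_{\sigma^{a,b}}(G)$ for every $\sigma^{a,b}$-stable $G$. This is the whole content of the lemma, and you leave it as an acknowledged obstacle. (Under the paper's convention $\sigma^{a,b+1}=\sigma^{a,b}\otimes\OO(H)$, i.e.\ $\cP^{a,b+1}(\phi)=\cP^{a,b}(\phi)\otimes\OO(H)$, not the inverse twist. Your criterion $\phi^+_{\sigma^{a,b}}(F\otimes L^{-1})\le\phi_{\sigma^{a,b}}(F)$ is nonetheless the correct one.) The paper does not prove this step either; it imports it as \cite[Lemma~2.3]{Li2026}. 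That citation is the missing ingredient.

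\textbf{Why your sketch for that step does not work as stated.}

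First, the heart is wrong. If $F\to F\otimes L$ were a monomorphism in $\cP((\phi-1,\phi])$, then $F\otimes L\in\cP((\phi-1,\phi])$. That bounds $\phi^+$ from above and gives only $\phi^->\phi-1$, not $\phi^-\ge\phi$. It is in fact false already for $n=1$: twisting a stable bundle of phase $\phi$ by a positive line bundle gives a stable bundle of strictly larger phase, which is not in that heart.

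Second, the correct heart does not help. The relevant heart is $\cP([\phi,\phi+1))$, and there you would need $\phi^-(F\otimes L\otimes\OO_H)\ge\phi$ for the cone of $F\to F\otimes L$. Since $F\in\cP(\phi)$ and $F[1]\in\cP(\phi+1)$, the triangle $F\to F\otimes L\to F\otimes L\otimes\OO_H\to F[1]$ shows this is equivalent to $\phi^-(F\otimes L)\ge\phi$ itself. So nothing has been gained.

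Third, the induction through $H$ does not get started. For $n\ge2$, $H=H_1+\cdots+H_n$ is reducible and $h^0(\OO(H))=1$, so there is no smooth member to restrict to. Moreover, the restriction result the paper uses (\cite[Proposition~1.13]{Li2025}, see \Cref{lem:descent}) is applied under a twist hypothesis of the form $\sigma\otimes L\lessapprox\sigma[1]$, which your argument does not supply.
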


\begin{proof}
    Write \(c = p/q\) with $p,q\in\Z_{>0}$ and set
    \[
    A\coloneq q^2a,\quad B\coloneq q^2b,\quad N\coloneq q^2c=pq.
    \]
    By \cite[Lemma 2.3]{Li2026}, we have $\sigma^{A,B}\lessapprox\sigma^{A,B+N}$.
    Let $\pi_q\colon E^n\to E^n$ denote the multiplication-by-$q$ isogeny. By equation (5.4) in \cite{Li2026}, the pullback along $\pi_q$ acts on the parameters as $\pi^{\sharp}_q\sigma^{a,b}=\sigma^{A,B}$,
    and $\pi^{\sharp}_q\sigma^{a,b+c}=\sigma^{A,B+N}$. Therefore, $\pi^{\sharp}_q\sigma^{a,b}\lessapprox\pi^{\sharp}_q\sigma^{a,b+c}$.
    By \cite[Lemma 3.3 and Lemma 5.3]{Li2026}, we obtain 
    \[
(\pi_q)_{\sharp}\pi^{\sharp}_q\sigma^{a,b}\lessapprox(\pi_q)_\sharp\pi^{\sharp}_q\sigma^{a,b+c}.
    \]
    Finally, by equation (5.5) in \cite{Li2026}, the composition $(\pi_q)_\sharp\pi^\sharp_q$ acts as the identity on the slicing and rescales the central charge by a positive real factor, which does not affect the $\lessapprox$ relation. Thus,  $\sigma^{a,b}\lessapprox\sigma^{a,b+c}$.
\end{proof}
  
\begin{lemma}
\label{lem:2}
    For every $(a,b)\in\QQ_{>0}\times\QQ$ and $c\in\QQ_{>0}$, there are stability conditions \(\sigma^{a,b},\sigma^{a,b+c}\in\Stab(\PP^n)\) such that
    \begin{equation}
    \label{eq:less for En}
      \sigma^{a, b} \lessapprox \sigma^{a, b + c}.  
    \end{equation}
\end{lemma}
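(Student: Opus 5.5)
The strategy is to transport \Cref{lem:1} from $E^n$ to $\PP^n$ along a finite morphism, using the pushforward construction $f_\sharp$. Let $E$ be an elliptic curve and let $\pi\colon E\to\bP$ be the double cover given by the quotient by $-1$. Then
\[
f=\pi^{\times n}\colon E^n\to(\bP)^n,
\]
composed with the quotient $(\bP)^n\to(\bP)^n/\mathfrak S_n\cong\PP^n$, is a finite surjective morphism $f\colon E^n\to\PP^n$ of degree $2^n n!$. Under this map the hyperplane class pulls back to a multiple of the divisor $H=\sum H_i$ from \Cref{thm:stab on En}; more precisely, $f^*\OO(1)=\OO(2H)$ up to numerical equivalence. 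We then define
\[
\sigma^{a,b}_{\PP^n}:=f_\sharp\sigma^{a',b'},
\]
with $(a',b')$ obtained from $(a,b)$ by the rescaling dictated by $f^*h=2H$. Concretely, $Z^{a',b'}\circ f^*$ should equal, up to a positive constant, $-\int_{\PP^n}e^{-(b+\ii a)h}\ch(F)$. The shift $b\mapsto b+c$ corresponds to $b'\mapsto b'+c/2$, which still lies in $\QQ_{>0}$.

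The first task is to show that $f_\sharp\sigma^{a',b'}$ is a genuine stability condition on $\Db(\PP^n)$. For this I would invoke the pushforward results of \cite{Li2026}, in the same way \Cref{lem:1} used \cite[Lemma 3.3, Lemma 5.3]{Li2026}. The key input is that $\sigma^{a',b'}$ is invariant under the deck group $G=\{\pm1\}^n\rtimes\mathfrak S_n$. This holds because the central charge is $G$-invariant, since $H$ is $G$-invariant, and because skyscrapers have phase $1$; the uniqueness statement in \Cref{thm:stab on En} then forces $g^*\sigma^{a',b'}=\sigma^{a',b'}$. Invariance under the finite group gives that $f_\sharp$ produces a slicing: the HN filtrations of $f^*E$ are $G$-equivariant and descend. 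The support property follows from finiteness of $f^*$ on numerical lattices.

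The second step is to transfer the order relation. By \Cref{lem:1} we have $\sigma^{a',b'}\lessapprox\sigma^{a',b'+c/2}$ on $E^n$, and by \cite[Lemma 3.3]{Li2026} pushforward preserves $\lessapprox$. The argument is direct: if $F\in f_\sharp\cP^{a',b'}(\phi)$, then $f^*F\in\cP^{a',b'}(\phi)\subset\cP^{a',b'+c/2}((-\infty,\phi])$. Since $f^*$ detects HN phases of pushed-forward slicings, $\phi^+_{f_\sharp}(F)=\phi^+(f^*F)$, and hence $F\in f_\sharp\cP^{a',b'+c/2}((-\infty,\phi])$. This yields \eqref{eq:less for En}.

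I expect the main obstacle to be the first step, namely checking that $f_\sharp$ really produces stability conditions on $\PP^n$ whose central charges are of the stated form. If the descent through the non-Galois-on-$(\bP)^n$ quotient turns out to be awkward, my fallback is to push forward in two stages. The first stage is $E^n\to(\bP)^n$, which is Galois with group $\{\pm1\}^n$. The second is $(\bP)^n\to\PP^n$, which is Galois with group $\mathfrak S_n$. At each stage I would check that the intermediate stability condition is invariant under the relevant group, so that the same equivariance argument applies twice.
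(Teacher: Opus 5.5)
Your proposal follows the paper's proof. The paper likewise sets $\sigma^{a,b}:=\pi_\sharp\sigma^{a,b}_{E^n}$ for the quotient $\pi\colon E^n\to E^n/((\Z/2\Z)^n\rtimes\mathfrak S_n)\simeq\PP^n$, and it concludes from \Cref{lem:1}, taking both facts it needs about $\pi_\sharp$ (that it produces stability conditions and that it preserves $\lessapprox$) from \cite[Theorem~6.3]{Li2026}.

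There are two small caveats. First, since $f^*h\equiv 2H$, the rescaling is $(a',b')=(2a,2b)$, so the shift is $2c$ rather than $c/2$; this is harmless, since $2c$ is still a positive rational. Second, your ``$G$-equivariant HN filtrations descend'' sketch is only a formal argument for free quotients. For the ramified map $E^n\to\PP^n$, and for both ramified stages of your fallback, equivariant HN factors need not lie in the image of $f^*$. So the appeal to Li's pushforward theorem carries the real weight of that step and is not a routine check.
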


\begin{proof}
Take $\sigma^{a,b}\coloneq \pi_\sharp\sigma^{a,b}_{E^n}$, where 
\[
\pi\colon E^n\to E^n/((\Z/2\Z)^n\rtimes\mathfrak S_n)\simeq\PP^n
\]
is the quotient map and $\sigma^{a,b}_{E^n}$ are the stability conditions from \Cref{thm:stab on En}. Since $\pi_\sharp$ preserves \(\lessapprox\) and sends stability conditions to stability conditions \cite[Theorem 6.3]{Li2026}, the result follows from \Cref{lem:1}.
\end{proof}

\begin{lemma}
    \label{lem:global-generated}
    Let \(X\) be a smooth projective variety with an ample line bundle \(H\), and let \(m\) be an integer such that \(mH\) is very ample. Let \(f\colon X\to\PP^n\) be the embedding induced by \(|mH|\), and let \(\sigma=f^{\sharp}\sigma^{a,b}\) be a stability condition from the construction of \cite{Li2026}. For any globally generated line bundle \(L\) on \(X\), we have
    \[\sigma \lessapprox \sigma \otimes L.\]
\end{lemma}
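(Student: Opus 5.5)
Here is the plan. We first reduce to the case $L=f^*\OO_{\PP^n}(1)=\OO_X(mH)$, then pass to general globally generated $L$ through a morphism to projective space.

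\textbf{Step 1: the case $L=f^*\OO(1)$.} Tensoring by $\OO(1)$ on $\PP^n$ shifts the $B$-field by one unit of the hyperplane class. Since the construction of \cite{Li2026} is compatible with the cover $E^n\to\PP^n$ and the classes pull back as expected, we expect
\[
\sigma^{a,b}\otimes\OO_{\PP^n}(1)=\sigma^{a,b+c_0}
\]
for a suitable $c_0\in\QQ_{>0}$ (of the form $1/2$ or $1$ depending on the normalisation of $H$). Then \Cref{lem:2} gives $\sigma^{a,b}\lessapprox\sigma^{a,b}\otimes\OO(1)$. By the projection formula, $f_*(E\otimes f^*\OO(1))=f_*E\otimes\OO(1)$, so
\[
f^\sharp(\tau\otimes\OO(1))=(f^\sharp\tau)\otimes f^*\OO(1).
\]
Moreover $f^\sharp$ preserves $\lessapprox$: if $E\in f^\sharp\cP_1(\phi)$, then $f_*E\in\cP_1(\phi)\subset\cP_2((-\infty,\phi])$. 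Since $f^\sharp\cP_2$ is a slicing whose HN filtrations push forward to HN filtrations, this gives $E\in f^\sharp\cP_2((-\infty,\phi])$. This is the analogue of \cite[Lemma 3.3]{Li2026} for pullbacks. Hence $\sigma\lessapprox\sigma\otimes\OO_X(mH)$.

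\textbf{Step 2: general globally generated $L$.} This step is the main obstacle, because $L$ need not be a multiple of $mH$. I would use the test of \Cref{lem:phase-order-tests}(2): it suffices to show $\phi^+_{\sigma\otimes L}(F)\le\phi_\sigma(F)$ for every $\sigma$-stable $F$. Equivalently, we need
\[
\Hom\bigl(G\otimes L,F\bigr)=0
\]
for every $\sigma$-semistable $G$ with $\phi(G)>\phi(F)$. Global generation gives a surjection $\OO_X^{\oplus N}\twoheadrightarrow L$, and the Koszul complex gives a resolution
\[
0\to \wedge^{N}V\otimes L^{-N+1}\to\cdots\to\OO^{\oplus N}\to L\to0 .
\]
These negative powers are not obviously controlled, so I would instead use the morphism $g=(f,\varphi_L)\colon X\to\PP^n\times\PP^M$. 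Alternatively, I would first prove that the set $\{L:\sigma\lessapprox\sigma\otimes L\}$ is closed under tensor product (by transitivity and the $\Aut$-invariance in \Cref{lem:phase-order-tests}(4)). It would then suffice to exhibit $L$ as a \lq\lq difference\rq\rq{} handled by Step 1.

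\textbf{Fallback.} A cleaner route is to note that $\sigma\otimes L$ is again of the form $f^\sharp$ applied to a stability condition on $\PP^n$ only when $L$ comes from $\PP^n$. So the first thing I would try is a direct argument with a section $s\in H^0(L)$. For $G$ semistable, $s$ gives a morphism $G\to G\otimes L$ whose cone is $G\otimes L|_D$ for the divisor $D=Z(s)$. Pushing forward along $f$, the sheaf $f_*(G\otimes\OO_D)$ is a torsion-type object. The central charge of \cite{Li2026}, with $\phi(\OO_p)=1$, should force its phases to be $\ge\phi(G)$ (objects supported in lower dimension have larger phase in this family). An induction on $\dim D$, with $D$ chosen via Bertini and $f|_D$ still finite, would then give
\[
\phi^-_{\sigma}(G\otimes L)\ge\phi(G),
\]
which is the criterion \Cref{lem:phase-order-tests}(3) in the reversed form. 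Controlling the phases of these restrictions to $D$ is where I expect the real work.
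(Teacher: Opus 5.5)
There is a genuine gap. Your Step 1 is fine and matches the paper's input $\rho^{a,b}\lessapprox\rho^{a,b+1}=\rho^{a,b}\otimes\OO(mH)$. However, Step 2, which is the actual content of the lemma, is not proved, and none of the routes you sketch closes.
\begin{itemize}
\item The set $\{L:\sigma\lessapprox\sigma\otimes L\}$ is closed under tensor products, but it is only a monoid. Writing $L$ as a ``difference'' would require $\sigma\lessapprox\sigma\otimes\OO(-kmH)$, which goes the wrong way.
\item The map $(f,\varphi_L)\colon X\to\PP^n\times\PP^M$ does not help, because $\sigma$ is not pulled back along it.
\item In the fallback, the claim $\phi^-_\sigma(G\otimes\OO_D)\ge\phi_\sigma(G)$ ``because lower-dimensional support has larger phase'' is essentially the statement to be proved. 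A central charge says nothing about the HN phases of an object not known to be semistable, and you give no mechanism for the induction on $\dim D$.
\end{itemize}

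The missing idea is to resolve $L$ by negative twists of the ample bundle rather than by its own Koszul complex. Global generation gives $\OO^{\oplus a_0}\twoheadrightarrow L$, and Serre's theorem lets you continue with sums of $\OO(-k_imH)$. Truncating after $n$ steps gives
\[
0\to F\to\OO(-k_{n-1}mH)^{\oplus a_{n-1}}\to\cdots\to\OO(-k_1mH)^{\oplus a_1}\to\OO^{\oplus a_0}\to L\to0,
\]
with $F$ merely coherent. For a $\sigma$-stable $E$ of phase $\theta\in(0,1]$, tensoring gives
\[
\phi^-(E\otimes L)\ge\min\{\phi(E),\ \phi^-(E\otimes\OO(-k_imH)[i]),\ \phi^-(E\otimes F[n])\}.
\]
You then need two ingredients.
\begin{itemize}
\item The middle terms need the \emph{other} half of the comparison, $\rho^{a,b'}\lesssim\rho^{a,b}[1]$ for $b<b'$ and $a\gg0$ \cite[Corollary 6.4]{Li2026}. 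Since $E\otimes\OO(-kmH)$ is $\rho^{a,b-k}$-semistable of phase $\theta$, this gives $\phi^-_{a,b}(E\otimes\OO(-kmH))>\theta-1$. Hence $\phi^-(E\otimes\OO(-k_imH)[i])\ge\theta$ for $i\ge1$. Your Step 1 only supplies the upper bound $\phi^+\le\theta$, which is useless here.
\item The uncontrolled tail is handled crudely. Skyscrapers have phase $1$, so $\Coh(X)\subset\cP((1-n,1])$ and $E\in\Coh(X)[0,n-1]$. Hence $E\otimes F[n]\in\cP((1,3n])$, so $\phi^-(E\otimes F[n])>1\ge\theta$.
\end{itemize}
Criterion (3) of \Cref{lem:phase-order-tests} then finishes. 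Your fallback targets the right inequality $\phi^-_\sigma(E\otimes L)\ge\phi_\sigma(E)$, but it lacks both ingredients: the width-less-than-one bound for twists by $\OO(-kmH)$, and the truncation at length $n$ that makes the arbitrary kernel $F$ harmless.
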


\begin{proof}
    The argument is analogous to that of \cite[Theorem 6.5]{Li2026}.  By \cite[Corollary 6.4]{Li2026}, there exists $a_0$ such that for every $a>a_0$,
\begin{equation}
    \rho^{a,b} \lessapprox \rho^{a, b'} \lesssim \rho^{a,b}[1]
    \label{eq:cmp}
\end{equation}
for \(b < b' \in \QQ\). Then the projection formula gives
\begin{equation}
    \rho^{a,b+1}=\rho^{a,b}\otimes\OO(mH).
    \label{eq:tensor}
\end{equation}. Now fix a $\sigma$-stable object $E\in\cP_{a,b}(\theta)$. Since \(L\) is globally generated, there exists a resolution
\[0 \to F \to \OO(-k_{n - 1} m H)^{\oplus a_{n-1}} \to \cdots \to \OO(-k_1 m H)^{\oplus a_1} \to \OO^{\oplus a_0} \to \OO(L) \to 0\]
with \(k_i,a_i \in \Z\), $a_i>0$, and \(F\) coherent. By tensoring with \(E\), we obtain
\[\phi^{-}_{a,b}(E \otimes \OO(L)) \geq \min\left\{\phi^{-}_{a,b}(E), \min_{1\leq i\leq n-1}\phi^{-}_{a,b}(E \otimes \OO(-k_i m H)[i]), \phi^{-}_{a,b}(E \otimes F[n])\right\}.\]
Using \eqref{eq:cmp} and \eqref{eq:tensor}, we have
\[\phi^{-}_{a,b}(E \otimes \OO(-k_i m H)[i]) \geq \phi_{a,b}(E).\]
Moreover, since all skyscraper sheaves are in \(\cP_{a,b}(1)\), \cite[Lemma 10.1]{Bridgeland2008} implies
\[E \in \Coh(X)[0,n - 1], \quad \Coh(X) \subset \cP_{a,b}((1-n, 1]).\]
Consequently, 
\[E \otimes F[n] \in \Coh(X)[n, 3n - 1] \subset \cP_{a,b}((1, 3n]).\]
In particular, \[\phi_{a, b}^{-}(E \otimes F[n]) > 1 \geq \theta.\]
Thus $\phi^-_{a,b}(E\otimes\OO(L))\geq\theta$. The result follows from \Cref{lem:phase-order-tests}.
\end{proof}

\begin{lemma}
\label{lem:3}
    Let \(X\) be a smooth projective variety with an ample line bundle \(H\), and let \(m\) be an integer such that \(m'H\) is very ample for every \(m'\geq m\). Let \(f\colon X\to\PP^n\) be the embedding induced by \(|mH|\). Then there are stability conditions $\sigma^{a,b}\in\Stab(\PP^n)$ such that
    \begin{equation}
    \label{eq:less for X}
    f^{\sharp}\sigma^{a,b} \otimes \OO(-H) = f^{\sharp}\sigma^{a,b-1/m}.
    \end{equation}
\end{lemma}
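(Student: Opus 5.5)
The plan is to reduce \eqref{eq:less for X} to a tensoring identity on $\PP^n$ and then use the projection formula for the finite morphism $f$. Concretely, take $\sigma^{a,b}=\pi_\sharp\sigma^{a,b}_{E^n}$ as in \Cref{lem:2}. First I will check that, on $\PP^n$, tensoring with $\OO(-1)$ shifts the parameter $b$ by $-1$:
\[
\sigma^{a,b}\otimes\OO_{\PP^n}(-1)=\sigma^{a,b-1}.
\]
The identity should follow by the usual compatibility with the quotient map. Since $H$ is the sum of the $H_i$, the pullback $\pi^*\OO_{\PP^n}(1)$ is a line bundle numerically equal to $2H$ (or some fixed multiple $dH$). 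The construction of \cite{Li2026} then gives that $\pi_\sharp$ intertwines $-\otimes\pi^*L$ with $-\otimes L$. This uses the projection formula $\pi^*(E\otimes L)=\pi^*E\otimes\pi^*L$ for the slicing, and the same formula at the level of $\ch$ for the central charge.

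On $E^n$, tensoring with a line bundle $M$ numerically equivalent to $kH$ multiplies $\ch$ by $e^{kH}$. This changes $Z^{a,b}$ into $Z^{a,b-k}$ up to sign conventions, and it preserves $\phi(\OO_p)=1$. By the uniqueness in \Cref{thm:stab on En}, we get $\sigma^{a,b}_{E^n}\otimes M=\sigma^{a,b-k}_{E^n}$. The discrepancy between $M$ and an exact multiple of $\OO(H)$ only involves $\Pic^0$, so I will invoke uniqueness again, as it depends only on numerical data. To get exactly $b-1$ rather than $b-d$, I will normalize the parameters on $\PP^n$ by setting $\sigma^{a,b}_{\PP^n}:=\pi_\sharp\sigma^{da,db}_{E^n}$. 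This is presumably the normalization under which \eqref{eq:tensor} in \Cref{lem:global-generated} reads $\rho^{a,b+1}=\rho^{a,b}\otimes\OO(mH)$. Either way I will fix the parametrization so that $\OO_{\PP^n}(1)$ shifts $b$ by exactly $1$.

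Next I transfer this to $X$ through $f^\sharp$. For $E\in\Db(X)$ and a line bundle $L$ on $\PP^n$, the projection formula gives $f_*(E\otimes f^*L)=f_*E\otimes L$. Hence $f^\sharp(\tau\otimes L)=f^\sharp\tau\otimes f^*L$ on slicings. The same identity holds on central charges, using $f_*$ on $K_{\rm num}$ and the fact that $\ch$ commutes appropriately via GRR numerically; alternatively it suffices that $[f_*(E\otimes f^*L)]=[f_*E\otimes L]$ in $K_0$. With $f^*\OO(1)=\OO(mH)$, this yields
\[
f^\sharp\sigma^{a,b}\otimes\OO(-mH)=f^\sharp\sigma^{a,b-1}.
\]

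The statement asks for $\OO(-H)$, not $\OO(-mH)$, and this is the main obstacle: $\OO(-H)$ is not pulled back from $\PP^n$. To handle it I will use the hypothesis that $m'H$ is very ample for all $m'\ge m$. Let $g$ be the embedding given by $|(m+1)H|$ (or by $|(m-1)H|$ after shifting the setup). I will then compare $f^\sharp\sigma^{a,b}$ and $g^\sharp\sigma^{a',b'}$ through their central charges. Both should equal the "$e^{-(b+\ii a)mH}$-type" charge $Z(E)=-\int_X e^{-(\beta+\ii\alpha)H}\ch(E)\cdot(\text{Todd-type correction})$ with $\beta=mb$, coming from $\int_{\PP^n}$ of $\pi_*$-data. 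If the charge of $f^\sharp\sigma^{a,b}$ depends only on $(mb,ma)$ in a way that also makes sense for non-integer shifts, then tensoring by $\OO(-H)$ shifts $\beta$ by $-1$, i.e.\ $b$ by $-1/m$. Identifying the slicing as well is the delicate step. My first attempt is a uniqueness argument: both $f^\sharp\sigma^{a,b}\otimes\OO(-H)$ and $f^\sharp\sigma^{a,b-1/m}$ are stability conditions with skyscrapers stable of phase $1$ and the same central charge. So if they lie in the same component of stability conditions of geometric type, they agree, for instance by the Bayer--Macr\`i / \cite{Li2026} uniqueness for geometric stability conditions determined by their central charge and the phase of points. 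If that uniqueness is unavailable, the fallback is to exploit the freedom in the choice of $\sigma^{a,b}$ ("there are stability conditions"): define $\sigma^{a,b}$ for $b\in\frac1m\Z+b_0$ by transporting along $\otimes\OO(-H)$, and use the identity on $\PP^n$ for compatibility when $b$ shifts by integers.
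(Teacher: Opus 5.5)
Your reductions before the last step are fine, and they match what the paper uses:
\begin{itemize}
\item The integral identity $f^\sharp\sigma^{a,b}\otimes\OO(-mH)=f^\sharp\sigma^{a,b-1}$ follows from the projection formula, as in the paper. Your remark that $\pi^*\OO_{\PP^n}(1)$ is numerically $2H$, so the parameters must be normalized, is a fair point.
\item The central charges of $f^\sharp\sigma^{a,b}\otimes\OO(-H)$ and $f^\sharp\sigma^{a,b-1/m}$ do agree by Grothendieck--Riemann--Roch. Twisting by $\OO(H)$ multiplies $\ch$ by $e^{H}$ and leaves the correction $\mathrm{td}(N_{X/\PP^n})^{-1}$ unchanged.
\end{itemize}

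The gap is the identification of the slicings. No theorem says that a stability condition on an arbitrary smooth projective $X$ is determined by its central charge together with the condition that skyscrapers are stable of phase $1$. The uniqueness in \Cref{thm:stab on En} is special to $E^n$. Bridgeland's surface argument identifies the heart as a single tilt of $\Coh(X)$, and this does not extend to higher dimension. Lying in a common connected component does not help either, since the central charge map is only a local homeomorphism.

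Your detour through the embedding $g$ by $|(m+1)H|$ does not even give matching central charges, because $g^*\mathrm{td}(\PP^N)$ and $f^*\mathrm{td}(\PP^n)$ differ. The fallback also fails. Twisting by $\OO(-H)$ transports stability conditions on $X$, not on $\PP^n$. The lemma is needed for $f^\sharp$ of the specific $\PP^n$-family of \Cref{lem:2}, because \Cref{thm:main} combines it with the ordering $\sigma^{a,b-1/m}\lessapprox\sigma^{a,b}$ on $\PP^n$. If you define the $(b-1/m)$-member on $X$ by transport, the inequality you would then need is \Cref{thm:main} itself.

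The missing idea is Bridgeland's rigidity lemma: two stability conditions with the same central charge whose slicings are at distance less than $1$ coincide. The paper applies it to $\OO(-(m+1)H)$ rather than $\OO(-H)$. This is where the hypothesis on $m'H$ actually enters: $(m+1)H$ is globally generated, so \Cref{lem:global-generated} applies. Put $\rho^{a,b}=f^\sharp\sigma^{a,b}$ and let $E$ be $\rho^{a,b}$-semistable of phase $\theta$.
\begin{itemize}
\item \Cref{lem:global-generated} gives $\phi^-_{\rho^{a,b}}(E\otimes\OO((m+1)H))\ge\theta$.
\item For the upper bound, use the nonzero map $E\otimes\OO((m+1)H)\to E\otimes\OO(2mH)$, the identity $\rho^{a,b}\otimes\OO(-2mH)=\rho^{a,b-2}$, and the monotonicity $\rho^{a,b'}\lessapprox\rho^{a,b}\lesssim\rho^{a,b'}[1]$ for $b'<b$. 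Together these give $\phi^-_{\rho^{a,b}}(E\otimes\OO((m+1)H))<\theta+1$. Iterating the same argument bounds $\phi^+$ as well.
\item The same monotonicity places $\phi^\pm_{\rho^{a,b-1-1/m}}(E)$ in $[\theta,\theta+1)$.
\end{itemize}
Hence $\rho^{a,b}\otimes\OO(-(m+1)H)$ and $\rho^{a,b-1-1/m}$ have equal central charges and slicings close enough for the rigidity lemma, so they coincide. Twisting by $\OO(mH)$ and using the integral identity then gives $\rho^{a,b}\otimes\OO(-H)=\rho^{a,b-1/m}$.
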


\begin{proof}
Let $\sigma^{a,b}\in\Stab(\PP^n)$ for which \Cref{lem:2} applies and set \(\rho^{a,b}=f^{\sharp}\sigma^{a,b}\).
Write \(\phi^{\pm}_{a,b}\coloneq\phi^{\pm}_{\rho^{a,b}}\). Fix \(E\in\cP_{a,b}(\theta)\) with \(\theta\in(0,1]\). By \Cref{lem:global-generated}, we have \(\phi^{-}_{a,b}(E\otimes\OO((m+1)H))\geq \theta\).

On the other hand, since \(\Hom(E\otimes\OO((m+1)H),E\otimes\OO(2mH))\neq0\), the inequalities
\[\phi_{a,b}^-(E \otimes \OO((m + 1)H)) \leq \phi_{a,b}^+(E \otimes \OO(2m H))  < \phi_{a, b}(E) + 1\] and
\[\phi_{a,b}(E)\leq \phi_{a, b - 1 - 1/m}^-(E) < \phi_{a,b}(E) + 1\]
hold. Therefore, 
\[\abs{\phi_{a,b-1-1/m}^-(E)-\phi_{\rho^{a,b}\otimes\OO(-(m+1)H)}^{-}(E)}<1.\]

By \Cref{lem:phase-order-tests}, we have \[\rho^{a,b}\otimes\OO(-(m+1)H)\lessapprox\rho^{a,b}.\] Applying the same argument multiple times, we have
\[\rho^{a,b}[-1]\lesssim\rho^{a,b}\otimes\OO(-m(m+1)H)\lessapprox\rho^{a,b}\otimes\OO(-(m+1)H)\lessapprox\rho^{a,b}.\]
Thus, 
\[\phi_{a,b}(E) \leq \phi_{a,b}^+(E \otimes \OO((m+1)H)) < \phi_{a,b}(E) + 1.\]
Therefore,
\[\abs{\phi_{a,b-1-1/m}^+(E)-\phi_{\rho^{a,b}\otimes\OO(-(m+1)H)}^{+}(E)}<1.\]

Since the two stability conditions \(\rho^{a,b-1-1/m}\) and \(\rho^{a,b}\otimes\OO(-(m+1)H)\) have the same central charge, \cite[Lemma 2.3]{Bridgeland2008} yields
\[\rho^{a,b-1-1/m}=\rho^{a,b}\otimes\OO(-(m+1)H).\]
This implies that
\[\rho^{a,b-1/m}=\rho^{a,b}\otimes\OO(-H),\]
which completes the proof.
\end{proof}

\begin{theorem}
\label{thm:main}
    Let \(X\) be a smooth projective variety and let \(H\) be an ample line bundle. Then there exists a stability condition \(\sigma\) such that
    \[\sigma \otimes \OO(-H) \lessapprox \sigma.\]
\end{theorem}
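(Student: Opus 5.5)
We take the stability condition $\sigma\coloneq\rho^{a,b}=f^{\sharp}\sigma^{a,b}$ supplied by \Cref{lem:3}. Here $m$ is chosen so that $m'H$ is very ample for all $m'\ge m$, $f\colon X\to\PP^n$ is the embedding given by $|mH|$, and $(a,b)$ is taken with $a>a_0$, so that the comparison \eqref{eq:cmp} from the proof of \Cref{lem:global-generated} is available. The idea is to rewrite the twist by $\OO(-H)$ as a shift of the parameter $b$, and then to use monotonicity of $\rho^{a,b}$ in $b$ with respect to $\lessapprox$.

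First, \Cref{lem:3} gives
\[
\sigma\otimes\OO(-H)=\rho^{a,b}\otimes\OO(-H)=\rho^{a,b-1/m}.
\]
So it suffices to prove $\rho^{a,b-1/m}\lessapprox\rho^{a,b}$. This is exactly the left inequality of \eqref{eq:cmp}, namely $\rho^{a,b'}\lessapprox\rho^{a,b''}$ for rational $b'<b''$, applied with $b'=b-1/m$ and $b''=b$.

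If one prefers not to quote \eqref{eq:cmp} directly, it can be recovered from the earlier lemmas. \Cref{lem:2} gives $\sigma^{a,b-1/m}\lessapprox\sigma^{a,b}$ on $\PP^n$. The pullback $f^{\sharp}$ along the closed embedding preserves $\lessapprox$, and the justification is as follows. An object $F$ lies in $f^{\sharp}\cP(\phi)$ precisely when $f_*F\in\cP(\phi)$. Moreover, $f_*$ carries HN filtrations for $f^\sharp\sigma$ to HN filtrations, since $f^\sharp\sigma$ is a genuine stability condition by \cite{Li2026}. Hence the phases satisfy $\phi^{\pm}_{f^\sharp\tau}(F)=\phi^{\pm}_{\tau}(f_*F)$, and criterion (2) of \Cref{lem:phase-order-tests} transfers from $\PP^n$ to $X$.

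A second route avoids the $b$-parameter comparison entirely. Since $mH$ is globally generated, \Cref{lem:global-generated} gives $\sigma\lessapprox\sigma\otimes\OO(mH)$. Twisting both sides by $\OO(-mH)$, which preserves $\lessapprox$ by part (4) of \Cref{lem:phase-order-tests}, yields $\sigma\otimes\OO(-mH)\lessapprox\sigma$. To descend from $mH$ to $H$, apply the same argument to $(m+1)H$, which is also very ample and hence globally generated. Then use $\rho^{a,b}\otimes\OO(-kH)=\rho^{a,b-k/m}$, obtained by iterating \Cref{lem:3}. Given this identity, the claim for a single $H$ reduces once more to monotonicity in $b$.

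The main obstacle is therefore the monotonicity $\rho^{a,b-1/m}\lessapprox\rho^{a,b}$ for the non-integral step $1/m$. The paper already absorbs this into \eqref{eq:cmp} and \Cref{lem:2}, the latter proved via isogenies on $E^n$. The care needed is in checking that the choice of $a$ (with $a>a_0$) and the rationality of $b$ keep every parameter used in the above steps in the range where these results apply.
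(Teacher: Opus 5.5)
Your main argument is correct and is essentially the paper's proof. You use \Cref{lem:3} to rewrite $\sigma\otimes\OO(-H)$ as $f^{\sharp}\sigma^{a,b-1/m}$, then conclude by monotonicity in $b$ (from \Cref{lem:2}) together with the fact that $f^{\sharp}$ preserves $\lessapprox$; the paper cites the latter as \cite[Lemma 3.1]{Li2026}, while you check it directly via $\phi^{\pm}_{f^\sharp\tau}(F)=\phi^{\pm}_{\tau}(f_*F)$. Your ``second route'' does not actually avoid the $b$-monotonicity, as you concede at its end, so it adds nothing beyond the first argument.
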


\begin{proof}
Choose an integer \(m>0\) such that \(mH\) is very ample, and denote by $f\colon X\hookrightarrow\PP^n$ the embedding induced by $|mH|$. By \Cref{lem:2} and \Cref{lem:3}, we obtain a stability condition $\sigma=f^{\sharp}\sigma^{a, b}$ satisfying \eqref{eq:less for En} and \eqref{eq:less for X}. Applying \cite[Lemma 3.1]{Li2026} then gives
\[
\sigma\otimes\OO(-H)=f^{\sharp}\sigma^{a,b}\otimes\OO(-H)=f^{\sharp}\sigma^{a,b-1/m}\lessapprox f^{\sharp}\sigma^{a,b}=\sigma.
\]
\end{proof}

\subsection{Reachability theorem}
In this section, we provide a proof of the following reachability result for smooth projective varieties.
\begin{theorem}
\label{thm:variety}
Let $X$ be a smooth projective variety of dimension $n$ over $\C$. Then we have
\[
\Gd\bigl(\Db(X)\bigr)=n.
\]
Moreover, 
\begin{enumerate}
    \item if $-K_X$ is ample, $\Gd\bigl(\Db(X)\bigr)$ is reachable on $\Db(X)$;
    \item if $K_X\in\Pic^0(X)$, $\gldim$ is constantly equal to $n$ on $\Stab(\Db(X))$;
    \item if $K_X$ is big and nef, $\Gd\bigl(\Db(X)\bigr)$ is not reachable on $\Db(X)$.
\end{enumerate}
\end{theorem}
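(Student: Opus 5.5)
Throughout, write $\sigma\otimes L$ for the action of $-\otimes L$, so that $\Sfun_X(\sigma)=\sigma\otimes K_X[n]$.

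\textbf{Lower bound.} This is \Cref{thm:kot-lower-bound} together with $\Sdimup(\Db(X))=n$, which gives $\Gd\ge n$.

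\textbf{Upper bound via Serre duality.} The basic observation is the following. Let $E_1\in\cP(\phi_1)$ and $E_2\in\cP(\phi_2)$ with $\Hom(E_1,E_2)\neq0$. By Serre duality,
\[
\Hom(E_2,E_1\otimes K_X[n])\neq0 .
\]
Now suppose $\sigma$ satisfies $\sigma\otimes K_X\lessapprox\sigma[\epsilon]$ for some $\epsilon\ge0$; that is, $E_1\otimes K_X\in\cP((-\infty,\phi_1+\epsilon])$. Then $\phi_2\le\phi^+(E_1\otimes K_X[n])\le\phi_1+n+\epsilon$, so $\gldim\sigma\le n+\epsilon$. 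The whole theorem therefore reduces to producing stability conditions with a controlled phase comparison between $\sigma$ and $\sigma\otimes K_X$.

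\textbf{Part (1): $-K_X$ ample.} Apply \Cref{thm:main} with $H=-K_X$. It gives $\sigma$ with $\sigma\otimes K_X\lessapprox\sigma$, hence $\gldim\sigma\le n$. Combined with the lower bound, $\gldim\sigma=n$.

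\textbf{General case: $\Gd=n$.} Choose $H$ ample with $H+K_X$ ample; for instance replace $H$ by a large multiple. I will use the family $\rho^{a,b}=f^\sharp\sigma^{a,b}$ from \Cref{lem:3}, for which $\rho^{a,b}\otimes\OO(-H)=\rho^{a,b-1/m}$.

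1. Write $K_X=(H+K_X)-H$. Since $H+K_X$ is ample, some multiple $k(H+K_X)$ is globally generated. \Cref{lem:global-generated} gives $\rho\lessapprox\rho\otimes\OO(k(H+K_X))$.
2. By \eqref{eq:cmp}, twisting by $\OO(-jH)$ shifts $b$ and moves phases by less than $1$. Iterating yields $\rho\otimes kK_X\lessapprox\rho[C]$ for a constant $C$ independent of $k$.
3. I then need to pass from $kK_X$ to $K_X$, so that the error becomes $C/k$. This step requires a subadditivity-type argument, and I have not worked it out.

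An alternative is to keep $K_X$ fixed but scale $a\to\infty$. In the large-volume limit the phase shift caused by tensoring with a fixed line bundle should tend to $0$, so $\epsilon\to0$ and $\gldim\rho^{a,b}\to n$.

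\emph{This approximation is the step I expect to be the main obstacle.} I would try the $a\to\infty$ route first. The key estimate to prove there is that $d(\rho^{a,b}\otimes L,\rho^{a,b})\to0$, obtained by comparing the central charges and applying Bridgeland's deformation bound.

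\textbf{Part (2): $K_X\in\Pic^0(X)$.} The class $K_X$ is numerically trivial, so $-\otimes K_X$ fixes the central charge. The group $\Pic^0(X)$ is connected, and $\Pic^0(X)$ acts continuously on $\Stab$ through a connected family of autoequivalences with trivial numerical action. Hence $\sigma\otimes K_X=\sigma$ by the deformation theorem \Cref{thm:deformation}, since a local homeomorphism admits no nontrivial continuous deformation with constant central charge. Thus $\Sfun\sigma=\sigma[n]$, the argument above applies with $\epsilon=0$, and $\gldim\sigma\le n$ for every $\sigma$. The lower bound then gives equality everywhere.

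\textbf{Part (3): $K_X$ big and nef.} Suppose $\gldim\sigma=n$. Take a $\sigma$-stable $E$ of phase $\phi$. For any stable factor $F$ of $E\otimes K_X^{\otimes k}$, Serre duality and $\gldim\le n$ force $\phi(F)\le\phi+\dots$. Iterating shows that the phases of $E\otimes K_X^{k}$ are bounded above uniformly in $k$; symmetrically, the phases of $E\otimes K_X^{-k}$ are bounded below. On the other hand, since $K_X$ is big, $kK_X$ has many sections and $\Hom(E,E\otimes K_X^{k})\neq0$, so these phases are also bounded below. I would now use the central charge: $|Z(E\otimes K_X^k)|$ grows polynomially of degree $n$ in $k$, with the leading coefficient determined by $K_X^n>0$. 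This growth should be incompatible with the phase of $E\otimes K_X^k$ being confined, together with the support property. I would aim instead for the following concrete contradiction. Take $E=\OO_x$ for a general point $x$ and use the nonvanishing of $\Hom(\OO_X,K_X^k)$ and of its dual $\Ext^n$, which forces $\gldim>n$. Making this strict inequality precise is the second delicate point.
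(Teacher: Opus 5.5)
Your lower bound, your Serre-duality reduction $\gldim(\sigma)\le n+\epsilon$ (the paper's inequality $\gldim(\sigma)\le n+d(\cP,\cP\otimes K_X)$), and parts (1) and (2) are correct and follow the paper. For (2), the paper obtains the needed continuity of $L\mapsto\sigma\otimes L$ on $\Pic^0(X)$ from Polishchuk.

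The general equality $\Gd\Db(X)=n$, however, is not proved.

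\begin{itemize}
\item In your first route the direction is wrong. \Cref{lem:global-generated} applied to $k(H+K_X)$ gives $\rho\otimes\OO(-kH)\lessapprox\rho\otimes kK_X$. This bounds $\rho\otimes kK_X$ from \emph{below} in the phase order. Bounding $\gldim$ needs a bound from above, such as $\rho\otimes K_X\lessapprox\rho\otimes\OO(H)$, i.e.\ global generation of $H-K_X$.
\item More seriously, Step~3 cannot work for a fixed $\rho$. Let $\delta(k)$ be the least $\epsilon$ with $\rho\otimes kK_X\lessapprox\rho[\epsilon]$. The only general relation is $\delta(k)\le k\,\delta(1)$, so a uniform bound on $\delta(k)$ says nothing about $\delta(1)$. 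For example, on a curve of genus $g\ge2$ every $\sigma_{\alpha,\beta}$ has $\delta(k)\le1$ for all $k$, since twisting by $kK_C$ keeps all phases in $(0,1]$. Yet $\delta(1)=\frac{2}{\pi}\arctan\frac{g-1}{\alpha}>0$ by \Cref{pro:positive-formula}.
\item Your second route picks the right family ($a\to\infty$) but leaves the decisive estimate unproved. A deformation-bound argument would need control of $|Z(E\otimes K_X^{-1})-Z(E)|/|Z(E)|$ over all $\rho^{a,b}$-semistable $E$, uniformly in $a$, and you have no access to that.
\end{itemize}

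The paper uses a sandwich and a rescaling instead. Choose $H$ with $H\pm K_X$ very ample and let $f$ be given by $|H|$. Then \Cref{lem:global-generated} yields $\rho\otimes\OO(-H)\lessapprox\rho\otimes K_X\lessapprox\rho\otimes\OO(H)$, and likewise with $\rho$ in the middle. \Cref{lem:3} identifies the outer terms with $\rho^{a,b-1}$ and $\rho^{a,b+1}$, so $d(\cP,\cP\otimes K_X)\le d(\cP^{a,b-1},\cP^{a,b+1})$. For $a=j^2$, the multiplication-by-$j$ isogeny of $E^N$ pulls $\sigma^{1,\pm1/j^2}$ back to $\sigma^{j^2,\pm1}$. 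Pullbacks and pushforwards do not increase slicing distance, so the bound is at most the distance between the slicings of $\sigma^{1,-1/j^2}$ and $\sigma^{1,1/j^2}$ on $E^N$, which tends to $0$.

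Part (3) is also left open. You are right that $\gldim(\sigma)=n$ forces $\sigma\otimes K_X\lessapprox\sigma$ (\Cref{cor:key}), but none of your follow-ups yields a contradiction.
\begin{itemize}
\item Combined with $\Hom(E,E\otimes K_X^{k})\neq0$, it only gives $\phi^+(E\otimes K_X^{k})=\phi(E)$.
\item You give no reason why growth of $|Z(E\otimes K_X^{k})|$ should clash with confined phases.
\item $\OO_x\otimes K_X^{k}\cong\OO_x$ carries no information.
\item Exploiting $\Hom(\OO_X,K_X^{k})\neq0$ would need phase control of $\OO_X$ and $K_X^{k}$, which need not be semistable.
\end{itemize}

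The missing ideas are these.
\begin{enumerate}
\item The basepoint-free theorem, which makes the big and nef $K_X$ semiample.
\item An induction on dimension by restriction to a smooth connected $D\in|mK_X|$ (\Cref{lem:big-nef-divisor}). Here $\sigma\otimes mK_X\lessapprox\sigma\lessapprox\sigma[1]$ is exactly what Li's criterion needs for $i^\sharp\sigma$ to be a stability condition on $\Db(D)$. The restriction inherits $\sigma_D\otimes mK_X|_D\lessapprox\sigma_D$, with $K_X|_D$ still big and semiample (\Cref{lem:descent}).
\item The base case of curves. After the $\widetilde{\mathrm{GL}}_2^+(\R)$-action, all line bundles are stable (Macr\`i). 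So a nonzero map $L'\to L'\otimes L^{m}$ between non-isomorphic stable objects forces $\phi(L')<\phi(L'\otimes L^{m})$, contradicting the phase order. The case $\PP^1$ is handled via the stable pair $\OO(k),\OO(k+1)$.
\end{enumerate}
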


\begin{lemma}[{\cite[Lemma 4.1]{KOT}}]
\label{lem:kot}
For every nonzero object $E\in\D$, we have
\[
\phi^+_{\sigma}(\Sfun E)-\phi^+_{\sigma}(E)\le\gldim(\sigma)
\]
for all \(\sigma\in\Stab(\D)\).
\end{lemma}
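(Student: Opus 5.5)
The plan is to combine Serre duality with the Harder--Narasimhan filtrations of $E$ and $\Sfun E$. The goal is to exhibit a nonzero morphism from a $\sigma$-semistable factor of $E$ to a $\sigma$-semistable object of phase exactly $\phi^+_\sigma(\Sfun E)$. The definition of $\gldim(\sigma)$ then bounds the resulting phase gap. If $\gldim(\sigma)=+\infty$ there is nothing to prove, so we may assume it is finite.

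First, let $A_1\in\cP(\phi^+_\sigma(\Sfun E))$ be the first HN factor of $\Sfun E$. By construction of the HN filtration, the canonical morphism $A_1=E_1\to \Sfun E$ is nonzero, so $\Hom_{\D}(A_1,\Sfun E)\neq 0$. Serre duality gives
\[
\Hom_{\D}(A_1,\Sfun E)\cong \Hom_{\D}(E,A_1)^{*},
\]
and hence $\Hom_{\D}(E,A_1)\neq0$.

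Second, let $F_1,\dots,F_k$ be the HN factors of $E$, with phases $\psi_1>\cdots>\psi_k$. We claim some $F_j$ satisfies $\Hom_{\D}(F_j,A_1)\neq 0$. Suppose instead that $\Hom_{\D}(F_j,A_1)=0$ for all $j$. Apply $\Hom_{\D}(-,A_1)$ to the triangles $E_{j-1}\to E_j\to F_j$ of the filtration of $E$. The long exact sequences show inductively that $\Hom_{\D}(E_j,A_1)=0$ for every $j$, and in particular $\Hom_{\D}(E,A_1)=0$. This contradicts the first step.

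Finally, for this $j$ we have nonzero objects $F_j\in\cP(\psi_j)$ and $A_1\in\cP(\phi^+_\sigma(\Sfun E))$ with a nonzero morphism between them. The definition of $\gldim$ therefore gives $\phi^+_\sigma(\Sfun E)-\psi_j\le\gldim(\sigma)$. Since $\psi_j\le\psi_1=\phi^+_\sigma(E)$, the claimed inequality follows.

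The argument is short. The only point requiring care is applying Serre duality in the correct direction, so that the top factor of $\Sfun E$ becomes the target of a morphism out of $E$, rather than working with the lowest factor of $E$.
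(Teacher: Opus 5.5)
Your proof is correct. The paper gives no proof of its own and simply cites \cite[Lemma~4.1]{KOT}; your argument is the standard one behind that citation: Serre duality turns the nonzero map from the top HN factor $A_1$ of $\Sfun E$ into a nonzero map $E\to A_1$, and the HN filtration of $E$ then produces a factor $F_j$ with $\Hom(F_j,A_1)\neq0$ and phase at most $\phi^+_\sigma(E)$. The one step you assert without justification, that $A_1\to\Sfun E$ is nonzero, holds because otherwise $A_1[1]$ would be a direct summand of the cone of this map; that cone lies in $\cP\bigl((-\infty,\phi^+_\sigma(\Sfun E))\bigr)$ and so admits no nonzero morphism from $A_1[1]$.
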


\begin{proposition}
\label{pro:Gd=n}
Let \(X\) be a smooth projective variety of dimension \(n\). Then \(\Gd\Db(X)=n\).
\begin{proof}
By \Cref{thm:kot-lower-bound}, we only need to show that there exists a sequence of stability conditions \(\{\sigma_j\}\subset\Stab(\Db(X))\) such that $\gldim(\sigma_j)\to n$. Let \(\sigma=(\cP,Z)\) be a stability condition on \(\Db(X)\), and set
\[
  \delta_{K_X}(\sigma)
  :=
  d\bigl(\cP,\cP\otimes K_X\bigr),
\]
where \(d\) denotes the standard distance between slicings.  We claim that
\[
  \gldim(\sigma)
  \leq n+\delta_{K_X}(\sigma).
\]
Indeed, let $E\in\cP(\phi)$ and $F\in\cP(\psi)$ with $\Hom(E,F)\neq0$. Then Serre duality yields
\[
  \Hom\bigl(F,E\otimes K_X[n]\bigr)\neq0.
\]
Thus,
\[
  \psi
  \leq
  \phi_{\cP}^{+}\bigl(E\otimes K_X[n]\bigr)
  =
  n+\phi_{\cP}^{+}(E\otimes K_X).
\]
On the other hand, \(E\otimes K_X\) is
\((\cP\otimes K_X)\)-semistable of phase \(\phi\).  Hence
\[
  \phi_{\cP}^{+}(E\otimes K_X)
  \leq
  \phi+\delta_{K_X}(\sigma).
\]
Therefore
\[
  \psi-\phi
  \leq n+\delta_{K_X}(\sigma).
\]
Taking the supremum over all such pairs \((E,F)\) proves the claim.

 We now choose a very ample line bundle \(H\) such that \(H + K_X\) and \(H - K_X\) are both very ample. Consider the embedding \(f\colon X \to {\PP}^N\) induced by \(|H|\) for some $N\in\Z_{>0}$. According to the proof of \Cref{thm:main}, there exists a sequence of stability conditions $\sigma_j\coloneq\sigma^{j^2,0}$ with $\cP_j=\cP^{j^2,0}$ satisfying 
 \[
  \sigma_j \otimes \OO(-H) \lessapprox \sigma_j \lessapprox \sigma_j \otimes \OO(H).
 \]
 Since \(H+K_X\) and \(H-K_X\) are very ample, \Cref{lem:global-generated} also implies
\[
  \sigma_j\otimes\OO_X(-H)
  \lessapprox
  \sigma_j\otimes K_X
  \lessapprox
  \sigma_j\otimes\OO_X(H).
\]
Therefore,
    \[\delta_{K_X}(\sigma_j) \leq d(\cP_j \otimes \OO(-H), \cP_j \otimes \OO(H))=d(\cP^{j^2,-1},\cP^{j^2,1}).\]
Denote by \(\cQ^{a,b}\) the slicing of the stability
condition on \(E^N\) used to construct
\(\sigma^{a,b}\).  Note that pullback and pushforward of slicings do not
increase the slicing distance. Therefore,
\[
d(\cP^{j^2,-1},\cP^{j^2,1})\le d(\cQ^{1,-1/j^2},\cQ^{1,1/j^2}),
\]
where the right-hand side converges to $0$. This implies that $\Gd\Db(X)=n$.
\end{proof}
\end{proposition}

\begin{lemma}
\label{lem:gls}
Let \(\D\) be a triangulated category with Serre functor \(\Sfun\), and let \(\sigma\) be a Bridgeland stability condition on \(\D\). Then
\[
  \gldim(\sigma) \leq n
  \quad\Longleftrightarrow\quad
  \Sfun(\sigma)\lessapprox\sigma[n].
\]
Here \(n\) may be any real number, and the right-hand side is understood via the right action of \(\R\subset\C\) on the stability space.
\end{lemma}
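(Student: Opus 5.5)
We prove the two implications directly from the definitions, combining Serre duality with \Cref{lem:phase-order-tests}. First we fix conventions. Here \(\Sfun(\sigma)=\Sfun\cdot\sigma\) has slicing \(\Sfun\circ\cP\), and \(\sigma[n]\) denotes the right action of \(n\in\R\), with slicing \(\cP_n(\phi)=\cP(\phi+n)\). Thus \(E\in\cP(\phi)\) is semistable of phase \(\phi-n\) for \(\sigma[n]\). (If the paper's sign convention is the opposite, the argument is unchanged up to replacing \(n\) by \(-n\) in the bookkeeping.) An object \(\Sfun E\) with \(E\in\cP(\phi)\) is \(\Sfun(\sigma)\)-semistable of phase \(\phi\). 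The condition \(\Sfun(\sigma)\lessapprox\sigma[n]\) therefore reads: for every \(E\in\cP(\phi)\), \(\Sfun E\in\cP_n((-\infty,\phi])=\cP((-\infty,\phi+n])\). Equivalently, \(\phi^+_\sigma(\Sfun E)\le\phi^+_\sigma(E)+n\) for every semistable \(E\). By \Cref{lem:phase-order-tests}(2), it suffices to check this on \(\sigma\)-stable \(E\).

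\emph{Forward direction.} Assume \(\gldim(\sigma)\le n\). For semistable \(E\), \Cref{lem:kot} gives \(\phi^+_\sigma(\Sfun E)-\phi^+_\sigma(E)\le\gldim(\sigma)\le n\), which is exactly the reformulated condition. If one prefers not to cite \Cref{lem:kot} as stated only for \(\Stab\), it can be reproved directly. Let \(A\) be the first HN factor of \(\Sfun E\), of phase \(\psi=\phi^+_\sigma(\Sfun E)\). Then \(\Hom(A,\Sfun E)\neq0\), so Serre duality gives \(\Hom(E,A)\neq0\). By the definition of \(\gldim\), \(\psi-\phi\le\gldim(\sigma)\le n\).

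\emph{Reverse direction.} Assume \(\Sfun(\sigma)\lessapprox\sigma[n]\), and take \(E\in\cP(\phi_1)\), \(F\in\cP(\phi_2)\) with \(\Hom(E,F)\neq0\). Serre duality gives \(\Hom(F,\Sfun E)\neq0\). Since \(F\) is semistable of phase \(\phi_2\) and \(\Sfun E\in\cP((-\infty,\phi_1+n])\), the vanishing axiom of slicings, extended to \(\Hom(\cP(\phi_2),\cP((-\infty,\psi]))=0\) for \(\phi_2>\psi\), forces \(\phi_2\le\phi_1+n\). Taking the supremum over all such pairs gives \(\gldim(\sigma)\le n\).

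The only delicate point is the sign and direction convention for the \(\R\)-action and for \(\Phi\cdot\sigma\). One must verify that \(\Sfun E\) has \(\Sfun(\sigma)\)-phase \(\phi\), and that \(\sigma[n]\) shifts phases by \(-n\), so that the inequality comes out as \(\phi+n\) rather than \(\phi-n\). Once this is fixed, both directions are formal. A second minor point is that \(n\) is real, which is harmless: only the slicing axioms and phase inequalities are used, never the integrality of \(n\).
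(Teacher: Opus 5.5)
Your proof is correct and follows the paper's argument. The forward direction is exactly Lemma~\ref{lem:kot} applied to a semistable $E$; your direct reproof via the first HN factor of $\Sfun E$ and Serre duality is simply the proof of that lemma. The converse is the same Serre-duality-plus-Hom-vanishing step the paper uses. Your sign bookkeeping also agrees with the paper's convention $\cP_z(\phi)=\cP(\phi+a)$, so $\Sfun(\sigma)\lessapprox\sigma[n]$ does mean $\phi^+_\sigma(\Sfun E)\le\phi+n$ for every $E\in\cP(\phi)$.
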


\begin{proof}
Assume first \(\gldim(\sigma) \leq n\).  If \(E\in\cP_\sigma(\phi)\), then 
phase inequality in \Cref{lem:kot} gives
\[
  \phi^+_\sigma(\Sfun E)
  \le
  \phi+n,
\]
Thus $\Sfun(\sigma)\lessapprox\sigma[n]$.

Conversely, assume $\Sfun(\sigma)\lessapprox\sigma[n]$.
Let \(E_i\in\cP_\sigma(\phi_i)\) and suppose $\Hom(E_1,E_2)\ne0$. By Serre duality,
\[
  \Hom(E_2,\Sfun(E_1))\ne0.
\]
Since \(\Sfun(\sigma)\lessapprox\sigma[n]\), all HN factors of \(\Sfun(E_1)\)
have phase at most \(\phi_1 + n\). Then, $\phi_2\le \phi_1+n$.
Hence $\gldim(\sigma)\le n$.
\end{proof}

\begin{corollary}
    \label{cor:key}
    Let \(X\) be a smooth projective variety of dimension \(n\), and let \(\sigma\) be a Bridgeland stability condition on \(\Db(X)\). Then
    \[
  \gldim(\sigma) = n
  \quad\Longleftrightarrow\quad
  \sigma\otimes K_X\lessapprox\sigma.
\]
\end{corollary}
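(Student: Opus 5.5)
The plan is to specialize \Cref{lem:gls} to $\D=\Db(X)$ with the real number $n=\dim X$, and then use the lower bound $\gldim(\sigma)\ge n$. By \Cref{pro:Gd=n} (or directly \Cref{thm:kot-lower-bound} together with $\Sdimup\Db(X)=n$), every $\sigma\in\Stab(\Db(X))$ has $\gldim(\sigma)\geq n$. Hence $\gldim(\sigma)=n$ holds exactly when $\gldim(\sigma)\le n$. By \Cref{lem:gls}, this is equivalent to $\Sfun_X(\sigma)\lessapprox\sigma[n]$.

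Next I rewrite the Serre functor action. On $\Db(X)$ we have $\Sfun_X=(-\otimes K_X)\circ[n]$, and the shift $[n]$ acting as an autoequivalence agrees with the right action of $n\in\R\subset\C$, up to the sign convention. So I need to check carefully that $\Sfun_X(\sigma)$ equals $(\sigma\otimes K_X)[n]$, with the same meaning of $[n]$ as in \Cref{lem:gls}. By definition, $\Phi\cdot\sigma$ has slicing $\Phi(\cP(\phi))$, so $\Sfun_X\cdot\sigma$ has slicing $\phi\mapsto\cP(\phi)\otimes K_X[n]$. In \Cref{lem:gls}, $\sigma[n]$ means the slicing $\phi\mapsto \cP(\phi)[n]=\cP(\phi+n)$, and this is the convention under which the proof of \Cref{lem:gls} reads ``phase at most $\phi_1+n$''. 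With this convention, $\Sfun_X(\sigma)$ is the stability condition $\sigma\otimes K_X$ shifted in the same way by $n$.

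Finally, I cancel the common shift. The relation $\lessapprox$ is invariant under applying the same autoequivalence to both sides; this is \Cref{lem:phase-order-tests}(4), applied with $\Phi=[-n]$. Equivalently, I can check it directly from the definition: both slicings are reindexed by the same translation. Therefore $(\sigma\otimes K_X)[n]\lessapprox\sigma[n]$ if and only if $\sigma\otimes K_X\lessapprox\sigma$, which proves the corollary.

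The only delicate point is the bookkeeping of the shift convention: the autoequivalence $[n]$ versus the $\C$-action by $n$, which has a sign twist since $\cP_z(\phi)=\cP(\phi+a)$. I will resolve it by reading the proof of \Cref{lem:gls}, which interprets $\Sfun(\sigma)\lessapprox\sigma[n]$ as ``$\Sfun E$ has HN phases $\le\phi+n$ for $E\in\cP_\sigma(\phi)$''. I will then verify directly that this is the same as ``$E\otimes K_X$ has $\sigma$-HN phases $\le\phi$''. This holds because $\phi^+_\sigma(E\otimes K_X[n])=\phi^+_\sigma(E\otimes K_X)+n$, and the direct verification avoids any ambiguity in conventions.
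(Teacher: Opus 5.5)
Your proposal is correct and follows the paper's proof: the lower bound $\gldim(\sigma)\ge n$ from \Cref{thm:kot-lower-bound} reduces the equality to $\gldim(\sigma)\le n$, and \Cref{lem:gls} with $\Sfun_X=(-\otimes K_X)[n]$ turns this into $\sigma\otimes K_X\lessapprox\sigma$. Your explicit shift bookkeeping only spells out what the paper leaves implicit, and in fact there is no sign twist to worry about, since the autoequivalence $[n]$ and the right action of $n\in\R$ both give the slicing $\phi\mapsto\cP(\phi+n)$.
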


\begin{proof}
    By \Cref{thm:kot-lower-bound}, we have \(\gldim(\sigma)\ge n\). The claim now follows from \Cref{lem:gls}.
\end{proof}

\begin{lemma}[{\cite[Theorem 5.16]{KOT}}]
\label{lem:genus>1}
Let \(C\) be a smooth projective curve of genus \(g(C)\ge2\). For any numerical stability condition $\sigma\in\Stab(C)$, we have 
\[
    \gldim(\sigma)>1.
  \]
\end{lemma}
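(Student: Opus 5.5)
The plan is to use Macrì's classification of stability conditions on curves of positive genus to reduce to a standard geometric stability condition, and then to exhibit one explicit nonzero morphism between stable line bundles whose phase gap is strictly larger than $1$. No use of \Cref{cor:key} or of the Serre-dimension lower bound is needed: the strict inequality comes from a direct computation.

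\emph{Step 1: reduction to a standard stability condition.} For $g(C)\ge1$, every numerical stability condition $\sigma\in\Stab(C)$ is in the $\widetilde{\mathrm{GL}}_2^+(\R)$-orbit of the standard one
\[
\sigma_0=\bigl(\Coh(C),\,Z_0=-\deg+\ii\,\rk\bigr).
\]
Under $\sigma_0$, every line bundle and every skyscraper sheaf is stable, so the same holds for $\sigma$. The action $\sigma\mapsto\sigma[g]$ with $g=(T,f)$ keeps the set of semistable objects and relabels phases by the increasing homeomorphism $f$, which commutes with shifts: $f(\phi+1)=f(\phi)+1$. Consequently, if an inequality $\phi_\sigma(A)+1>\phi_\sigma(B)+1$ holds, it holds for all points of the orbit. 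It therefore suffices to compare phases under $\sigma_0$, where a line bundle $L$ has phase $\phi_0(L)\in(0,1)$ and this phase is strictly increasing in $\deg L$.

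\emph{Step 2: the key Ext class.} Fix a line bundle $L$ and a point $p\in C$, and set $L'=L(p)$. By Serre duality,
\[
\Ext^1(L,L')\cong H^0\bigl(K_C\otimes L\otimes L'^{-1}\bigr)^\ast=H^0\bigl(K_C(-p)\bigr)^\ast .
\]
Since $C$ is not rational, $p$ is not a base point of $|K_C|$, so $h^0(K_C(-p))=g-1$. This is at least $1$ precisely because $g\ge2$, and this is where the genus hypothesis enters. Hence $\Hom(L,L'[1])\neq0$, with $L\in\cP(\phi_\sigma(L))$ and $L'[1]\in\cP(\phi_\sigma(L')+1)$.

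\emph{Step 3: the phase gap.} By the definition of $\gldim$,
\[
\gldim(\sigma)\ \ge\ \phi_\sigma(L')+1-\phi_\sigma(L).
\]
Since $\deg L'=\deg L+1$, Step 1 gives $\phi_\sigma(L')>\phi_\sigma(L)$. Therefore $\gldim(\sigma)>1$.

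The main obstacle is Step 1. Everything hinges on the classification that all stability conditions on a curve of genus $\ge1$ are geometric, that is, on knowing that line bundles are stable with phases ordered by degree for an arbitrary $\sigma$. I would cite Macrì's result for this rather than reprove it; the remaining steps are routine.
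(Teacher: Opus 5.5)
Your proof is correct. Note that the paper gives no proof of this lemma: it is quoted from \cite[Theorem~5.16]{KOT}. The paper does recover the statement internally, though, and your route differs from those arguments in a useful way.

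\Cref{pro:positive-formula} uses the $\C$-action (under which $\gldim$ is invariant) to normalize $\sigma$ to $(\Coh(C),Z_{\alpha,\beta})$. It then writes $\gldim(\sigma)=1+\sup_E\bigl(\phi_\sigma(E\otimes K_C)-\phi_\sigma(E)\bigr)$ via Serre duality and optimizes over the dense set of stable slopes. This gives the exact value $1+\frac2\pi\arctan\frac{g-1}{\alpha}$, from which strictness follows because $g-1>0$.

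You instead pass to the full $\widetilde{\mathrm{GL}}_2^+(\R)$-orbit. That action does \emph{not} preserve $\gldim$ (the value ranges over $(1,2)$ on a single orbit), but it does preserve semistability and the strict order of phases. This is all you need, since a single witness $L\to L(p)[1]$ then suffices. Your route is lighter: no density of stable slopes and no optimization. It yields only the strict inequality, however, not the value of $\gldim$ or its image.

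Your mechanism is essentially the one in the paper's genus $\ge1$ base case of the lemma that no $\sigma$ satisfies $\sigma\otimes L\lessapprox\sigma$ for $L$ big and semiample. Combined with \Cref{cor:key} and \Cref{thm:kot-lower-bound}, that lemma gives a second in-paper proof. Your argument avoids the Serre-dimension lower bound altogether.

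Two minor simplifications are available in Step 2. First, $h^0(K_C(-p))\ge h^0(K_C)-1=g-1$ holds because vanishing at one point imposes at most one linear condition, so base-point-freeness of $|K_C|$ is not needed. Second, you could take $L'=L\otimes K_C$, for which $\Ext^1(L,L\otimes K_C)\cong\Hom(L\otimes K_C,L\otimes K_C)^\ast\cong\C$. The genus hypothesis then enters only through $\deg K_C=2g-2>0$, exactly as in the paper's use of $\Hom(\OO_C,K_C[1])\neq0$.
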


\begin{lemma}
\label{lem:descent}
Let \(i:D\hookrightarrow Y\) be a smooth divisor with \(D\in |L|\).  Suppose
\[
  \sigma\otimes L\lessapprox\sigma.
\]
Then there exists a stability condition \(\sigma_D\) on \(\Db(D)\) such that
\[
  \sigma_D \otimes L|_D \lessapprox \sigma_D.
\]
\end{lemma}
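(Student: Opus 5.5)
The plan is to take \(\sigma_D:=i^{\sharp}\sigma\), the pullback along the closed embedding \(i\) in the sense of the definition above. Its slicing is \(i^\sharp\cP(\theta)=\{E\in\Db(D)\mid i_*E\in\cP(\theta)\}\) and its central charge is \(Z\circ i_*\). That this is a stability condition on \(\Db(D)\) (not only a pre-stability condition) I would take from the pullback construction of \cite{Li2026}, which applies to finite morphisms and in particular to closed embeddings; this is the same construction used for \(f^\sharp\) in \Cref{lem:global-generated} and \Cref{lem:3}. If that result needs hypotheses beyond \(\sigma\) being a stability condition, I would instead first replace \(\sigma\) by a stability condition of the form produced in \Cref{thm:main}, for which the pullback is known to exist.

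The key steps, in order:

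\textbf{Step 1: compatibility of pullback with twisting.} By the projection formula,
\[
  i_*(F\otimes L|_D)\cong i_*F\otimes L .
\]
Hence \(i^\sharp(\sigma\otimes L)=(i^\sharp\sigma)\otimes L|_D\), both on slicings and on central charges. This is the analogue of the identity \cite[Lemma 3.1]{Li2026} used in \Cref{thm:main}.

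\textbf{Step 2: pullback preserves the phase order.} I will show that \(\tau\lessapprox\tau'\) on \(Y\) implies \(i^\sharp\tau\lessapprox i^\sharp\tau'\). Take \(F\in i^\sharp\cP_\tau(\phi)\), so that \(i_*F\in\cP_\tau(\phi)\subset\cP_{\tau'}((-\infty,\phi])\). We need \(F\in i^\sharp\cP_{\tau'}((-\infty,\phi])\). This reduces to checking that the \(i^\sharp\tau'\)-HN filtration of \(F\) is sent by \(i_*\) to the \(\tau'\)-HN filtration of \(i_*F\), which I would cite from \cite[Lemma 3.3]{Li2026}, the same pullback/pushforward compatibility already invoked in \Cref{lem:1}. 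Granting it, \(\phi^+_{i^\sharp\tau'}(F)=\phi^+_{\tau'}(i_*F)\le\phi\), and \Cref{lem:phase-order-tests} finishes the step.

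\textbf{Step 3: conclude.} Applying Step 2 to \(\sigma\otimes L\lessapprox\sigma\) and then using Step 1 gives
\[
  \sigma_D\otimes L|_D=i^\sharp(\sigma\otimes L)\lessapprox i^\sharp\sigma=\sigma_D .
\]

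The main obstacle is existence: showing that \(i^\sharp\sigma\) is a genuine stability condition, in particular that HN filtrations exist in \(\Db(D)\). For a general \(\sigma\) this may fail, and I expect the statement is really meant for the stability conditions built in \Cref{thm:main}, where \(i^\sharp\) is available by \cite{Li2026}. If needed, I would therefore restrict to that setting and verify the HN property there via the finiteness of \(i\). The smoothness of \(D\) is used only to place \(\Db(D)\) within the framework of \cite{Li2026}.
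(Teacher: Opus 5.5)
Your Steps 1--3 are correct, and they match what the paper means by ``the lemma then follows from the pullback construction.'' Step 2 is in fact immediate: once \(i^\sharp\tau'\) is a slicing, \(i_*\) sends its HN filtration of \(F\) to a filtration of \(i_*F\) with \(\tau'\)-semistable factors of decreasing phases, which is therefore the \(\tau'\)-HN filtration.

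\textbf{The gap is the existence step.} The pullback \(i^\sharp\sigma\) along a closed embedding is not a stability condition for an arbitrary \(\sigma\). Take \(D=\{p\}\subset Y=\bP\) and \(\sigma\) in a chamber with \(d>1\) as in \Cref{thm:okada-classification}. The \(\sigma\)-HN factors of \(\OO_p\) are \(\OO(k)\) and \(\OO(k-1)[1]\). If \(i^\sharp\cP\) were a slicing, your own Step~2 reasoning would make these factors lie in the essential image of \(i_*\), which consists only of sums of shifts of \(\OO_p\). So \(i^\sharp\sigma\) does not exist, and citing \cite{Li2026} for all finite morphisms does not give it.

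Your fallback does not repair this. Replacing \(\sigma\) by a stability condition from \Cref{thm:main} throws away the hypothesis \(\sigma\otimes L\lessapprox\sigma\); those conditions satisfy \(\sigma\lessapprox\sigma\otimes\OO(H)\), the opposite kind of inequality. Moreover, the lemma is used in the proof by contradiction for \(K_X\) big and nef, where \(\sigma\) is an arbitrary hypothetical stability condition satisfying the hypothesis. Restricting to the stability conditions of \Cref{thm:main} would make the lemma useless exactly where it is needed.

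\textbf{The missing idea is that the hypothesis itself guarantees existence.} Trivially \(\sigma\lessapprox\sigma[1]\), so transitivity gives
\[
  \sigma\otimes\OO_Y(D)=\sigma\otimes L\lessapprox\sigma[1].
\]
This is exactly the input the paper feeds into the divisor-restriction result \cite[Proposition 1.13]{Li2025}, which then yields \(i^\sharp\sigma\in\Stab(\Db(D))\). In the \(\bP\) example above this condition indeed fails: with \(A_k=\OO(k-1)[1]\) of phase \(1\) and \(B_k=\OO(k)\) of phase \(d>1\), the \((\sigma\otimes\OO(1))\)-stable object \(\OO(k)[1]\) has \((\sigma\otimes\OO(1))\)-phase \(1\) but \(\sigma[1]\)-phase \(d\). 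Once existence is obtained this way, your Steps 1--3 finish the proof.
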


\begin{proof}
Since \(\sigma\lessapprox\sigma[1]\), the assumption implies $\sigma\otimes L\lessapprox\sigma[1]$. By \cite[Proposition 1.13]{Li2025}, $i^{\sharp}\sigma$ defines a stability condition on $\Db(D)$, which we denote by \(\sigma_D\). The lemma then follows from the pullback construction.

\end{proof}

\begin{lemma}
\label{lem:big-nef-divisor}
Let \(X\) be smooth projective of dimension \(n\ge2\) over $\C$, with \(L\) big and semiample. For all sufficiently divisible \(m>0\), a general divisor
\(D\in\lvert mL\rvert\) is smooth and connected, and \(L|_D\) is also big and semiample.
\end{lemma}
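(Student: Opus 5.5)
Since \(L\) is semiample, I would fix \(k>0\) with \(kL\) base point free. Let \(\varphi\colon X\to\PP^M\) be the morphism defined by \(|kL|\), and let \(X\to Z\to\PP^M\) be its Stein factorization. Bigness of \(L\) means \(\varphi\) is generically finite onto its image, so \(\dim\varphi(X)=n\ge2\).

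Smoothness comes from Bertini's theorem for base point free linear systems in characteristic zero. For every multiple \(m\) of \(k\), the system \(|mL|\) is base point free, so a general member \(D\in|mL|\) is smooth. Connectedness comes from the Bertini connectedness theorem (Fulton--Hansen / Jouanolou form): if \(\dim\varphi(X)\ge2\), then the preimage of a general hyperplane is irreducible, hence connected. I would apply this to the morphism \(\varphi_m\) given by \(|mL|\), whose image has dimension \(n\ge2\); general members of \(|mL|\) are exactly pullbacks of general hyperplanes under \(\varphi_m\). Alternatively, I could argue via Kawamata--Viehweg vanishing. The sequence \(0\to\OO_X(-D)\to\OO_X\to\OO_D\to0\) gives \(h^0(\OO_D)=1\) provided \(H^1(X,\OO_X(-mL))=0\), and that vanishing follows from Kawamata--Viehweg applied to the big and nef divisor \(mL\) (Serre-dual form). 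Combined with smoothness, \(h^0(\OO_D)=1\) gives connectedness.

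For the restriction \(L|_D\), semiampleness is immediate: \(kL|_D\) is base point free, being the restriction of a base point free bundle. For bigness, I would use nefness and compute the top self-intersection:
\[
(L|_D)^{n-1}=L^{n-1}\cdot D=m\,L^n>0,
\]
since \(L^n>0\) for a big nef divisor. A nef divisor with positive top self-intersection is big, so \(L|_D\) is big. Alternatively, \(\varphi|_D\) is still generically finite onto its image, because \(D\) is the preimage of a general hyperplane section of \(\varphi(X)\). When \(n-1=1\), the statement reduces to \(\deg L|_D>0\), which the intersection computation covers.

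The main obstacle is connectedness: we need \(D\) to meet the locus where \(\varphi\) is not finite in a controlled way. Both routes sketched above rely only on \(\dim\varphi(X)\ge2\), and I would present the vanishing argument as the primary one since it is cleanest. "Sufficiently divisible" just means \(k\mid m\).
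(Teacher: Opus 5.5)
Your proposal is correct and follows essentially the same route as the paper. Both arguments use Bertini for smoothness of a general member of the base-point-free system $\lvert mL\rvert$, Kawamata--Viehweg vanishing of $H^{n-1}(X,K_X\otimes mL)$ (Serre dual to $H^1(X,\OO_X(-mL))$) together with the restriction sequence to get $h^0(\OO_D)=1$, restriction of the surjective evaluation map for semiampleness, and $(L|_D)^{n-1}=mL^n>0$ for bigness. Your alternative connectedness argument via Jouanolou's Bertini theorem is also valid, and your reading of ``sufficiently divisible'' as $k\mid m$ is slightly more precise than the paper's ``$m\gg0$''.
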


\begin{proof}
Since \(L\) is semiample, $|mL|$ is base-point free for $m\gg0$. By Bertini's smoothness theorem
\cite[Chapter~III, Corollary~10.9]{Hartshorne}, a general $D\in |mL|$ is smooth. 
By Serre duality and \cite[Theorem~2.70]{KollarMori}, we have
\[ H^1\bigl(X,\OO_X(-mL)\bigr)^\vee
  \simeq
  H^{n-1}\bigl(X,K_X\otimes mL\bigr)=0.
\]
Thus, the short exact sequence
\[
  0\longrightarrow\OO_X(-mL)
  \longrightarrow\OO_X
  \longrightarrow\OO_D
  \longrightarrow0
\]
and \(H^0(\OO_X(-mL))=0\) imply
\[
  H^0(D,\OO_D)\simeq H^0(X,\OO_X)\simeq\C.
\]
It follows that \(D\) is connected.
The restriction \(L|_D\) is semiample because the surjective evaluation map
\[H^0(X,mL)\otimes\OO_X\longrightarrow\OO_X(mL)\]
restricts to a surjection
\[H^0(X,mL)\otimes\OO_D\longrightarrow\OO_D(mL|_D).\]
Moreover,
\[ L_D^{n-1} 
  =  L^{n - 1} \cdot D 
  =  mL^n > 0.
\]
Hence \(L|_D\) is big and semiample.
\end{proof}

\begin{lemma}
    Let \(X\) be a smooth projective variety and let \(L\) be a big and semiample line bundle. Then there is no stability condition satisfying
    \[\sigma \otimes L \lessapprox \sigma.\]
\end{lemma}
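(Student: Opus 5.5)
We argue by induction on \(n=\dim X\), using \Cref{lem:descent} and \Cref{lem:big-nef-divisor} to cut down dimension. The base case is a curve; for \(n=0\) the statement is vacuous or trivial. So suppose \(X\) is a curve \(C\) and \(L\) is big and semiample, i.e. \(\deg L>0\). Assume \(\sigma\otimes L\lessapprox\sigma\).

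\emph{Base case.} We derive a contradiction from numerics of the central charge. Let \(E\) be a \(\sigma\)-stable object. \Cref{lem:phase-order-tests} gives \(\phi^+_\sigma(E\otimes L)\le\phi_\sigma(E)\), and the same holds for every power \(L^{k}\) by transitivity. Now \(Z(E\otimes L^k)=Z(E)+k\deg L\cdot r(E)\,Z'\) for a fixed complex number \(Z'\), with \(Z'\) determined by \(Z(\mathrm{rank})\) and \(Z(\mathrm{pt})\). Take \(E=\OO_x\), which is stable for every stability condition on a curve of genus \(\ge1\) (Macrì), or use \(\OO_C\) together with the known description of \(\Stab(C)\). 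Tensoring by \(L^k\) rotates the central charge of a rank-one line bundle toward the direction of \(Z(\OO_x)\). Up to the \(\widetilde{\mathrm{GL}}_2^+(\R)\)-action, \(\sigma\) is geometric: its heart is \(\Coh(C)\), and \(Z=-\deg+\ii\,\rk\) after the action. In that normalization a line bundle \(M\) has phase \(\phi(M)\in(0,1)\) with \(\cot(\pi\phi)\) proportional to \(\deg M\), so \(\phi(M\otimes L)>\phi(M)\) strictly. This contradicts \(\phi^+_\sigma(M\otimes L)\le\phi_\sigma(M)\). For \(g\ge1\) every stability condition is in the \(\widetilde{\mathrm{GL}}_2^+\)-orbit of the geometric ones. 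For \(C=\bP\) we cannot use this, and the plan is instead the global-dimension route: \(\lessapprox\) is preserved under \(\widetilde{\mathrm{GL}}_2^+\), so we argue via stable objects \(\OO(k)\) whose phases must be nonincreasing in \(k\) while \(\Hom(\OO(k),\OO(k+1))\neq0\) forces them nondecreasing. Then all \(\OO(k)\) have equal phase, contradicting \(\Hom(\OO(k),\OO(k+1))\ne0\) together with \(\Ext^1(\OO(k+1),\OO(k))=0\) and the fact that \(Z(\OO(k))\) cannot all be collinear.

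\emph{Inductive step.} For \(n\ge2\), replace \(L\) by \(mL\) with \(m\) sufficiently divisible as in \Cref{lem:big-nef-divisor}. This is harmless because \(\sigma\otimes L\lessapprox\sigma\) iterates by transitivity to \(\sigma\otimes L^{m}\lessapprox\sigma\). Choose a smooth connected \(D\in|mL|\) with \(mL|_D\) big and semiample. By \Cref{lem:descent}, applied with the line bundle \(mL\), whose divisor is \(D\), we obtain \(\sigma_D\) on \(\Db(D)\) with \(\sigma_D\otimes mL|_D\lessapprox\sigma_D\). This contradicts the inductive hypothesis for \(D\), which has dimension \(n-1\).

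\emph{Main obstacle.} The delicate point is the base case. Making the phase-monotonicity contradiction uniform over all stability conditions on curves of arbitrary genus needs the classification of \(\Stab(C)\) (Bridgeland for elliptic curves, Macrì for \(g\ge1\), Okada for \(\bP\)). The two cases differ, and for \(\bP\) one must argue with the exceptional line bundles rather than skyscrapers.
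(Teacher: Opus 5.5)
\textbf{There is a gap in the base case \(C=\bP\).} Your argument compares the phases of \(\OO(k)\) for all \(k\), treating every line bundle as \(\sigma\)-stable. That holds only in the region \(d<1\) of \Cref{thm:okada-classification}. For \(d>1\), the only semistable indecomposable objects are shifts of a single pair \(\OO(k-1)[1],\OO(k)\). Every other line bundle is then not semistable and has no phase, so ``phases nonincreasing in \(k\)'' has no content there. For \(L=\OO(a)\) with \(a\ge2\), the hypothesis compares only \(\OO(k)\) with \(\OO(k+a)\) in any case.

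Your closing contradiction is also not the right one. Once two stable objects \(\OO(k),\OO(k+1)\) have equal phase, \(\Hom(\OO(k),\OO(k+1))\neq0\) already contradicts stability, because a nonzero map between stable objects of the same phase is an isomorphism. Non-collinearity of the \(Z(\OO(k))\) is not a general fact: at \(d=1\) the central charge is real-valued.

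The missing step is to work with a single consecutive stable pair, which Okada's classification always provides. For every \(\sigma\) there is a \(k\) with \(\OO(k),\OO(k+1)\) both \(\sigma\)-stable, and then
\[
\phi_\sigma(\OO(k))<\phi_\sigma(\OO(k+1))\le\phi^+_\sigma(\OO(k+a))\le\phi_\sigma(\OO(k)).
\]
The first inequality uses stability and \(\Hom(\OO(k),\OO(k+1))\neq0\). The second uses semistability of \(\OO(k+1)\) and \(\Hom(\OO(k+1),\OO(k+a))\neq0\). The third is \(\sigma\otimes\OO(a)\lessapprox\sigma\) applied to \(\OO(k)\). This is the paper's argument, and it covers all chambers at once. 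Your last paragraph names the right objects, but the argument you actually wrote does not use them this way.

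\textbf{The rest is sound.} Your inductive step is the same as the paper's.

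For \(g\ge1\) you take a slightly different route. You move \(\sigma\) by \(\widetilde{\mathrm{GL}}_2^+(\R)\) to \((\Coh(C),-\deg+\ii\rk)\); this is legitimate because the action commutes with \(-\otimes L\) and preserves \(\lessapprox\). There the phase of a line bundle is explicitly strictly increasing in its degree, which gives the contradiction after a single tensor with \(L\). The paper uses only that line bundles are stable, and gets a nonzero map \(L'\to L'\otimes L^m\) from \(H^0(L^m)\neq0\). Both arguments work.

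Two smaller corrections:
\begin{itemize}
\item Drop the detour through \(\OO_x\). It carries no information, since \(\OO_x\otimes L\cong\OO_x\).
\item Your remark on \(n=0\) is wrong. For a point, \(\OO\) is big and semiample and \(\sigma\otimes\OO=\sigma\), so the statement is false there. The induction must start at curves, as yours in fact does.
\end{itemize}
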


\begin{proof}
    We argue by induction on the dimension \(n\). If \(n=1\), there are two cases, according to whether the genus is \(g=0\) or \(g\geq1\).
    
    If \(g\geq1\), suppose for a contradiction that there exists \(\sigma\in\Stab(X)\) such that \(\sigma\otimes L\lessapprox\sigma\). Up to the action of \(\widetilde{\mathrm{GL}}_2^+(\R)\), every \(\sigma\in\Stab(X)\) is equivalent to a slope stability condition \cite{Macri}; in particular, every line bundle is stable. Iterating the assumed phase order gives \(\sigma\otimes L^m\lessapprox\sigma\) for every $m\geq1$. Since \(L\) is big and semiample, \(H^0(L^m)\neq0\) for all sufficiently large \(m\). Therefore there exists a nonzero homomorphism \(L'\to L'\otimes L^m\). Since both line bundles are stable, we have \(\phi(L')<\phi(L'\otimes L^m)\), contradicting \(\sigma\otimes L^m\lessapprox\sigma\).
    
    If \(g=0\), then \(X=\bP\) and \(L=\OO(a)\) with \(a>0\). By the classification of stability conditions on \(\Db(\bP)\), for every stability condition \(\sigma\), there exists \(k\in\Z\) such that \(\OO(k)\) and \(\OO(k+1)\) are \(\sigma\)-stable. If \(\sigma\otimes\OO(a)\lessapprox\sigma\), we have
    \begin{align*}
        \phi(\OO(k)) &< \phi(\OO(k+1)) \\
        &\leq \phi^+(\OO(k + a)) \\
        &\leq \phi(\OO(k)),
    \end{align*}
    which gives the contradiction.
    
    Therefore the lemma holds in dimension \(1\). For \(n\geq2\), assume the result in dimensions at most \(n-1\). Suppose for a contradiction that there exists $\sigma\in\Stab(X)$ with $\sigma\otimes L\lessapprox\sigma$. By iteration, $\sigma\otimes mL\lessapprox\sigma$. By \Cref{lem:big-nef-divisor}, there exists a smooth connected divisor \(Y\in|mL|\) such that \(L|_Y\) is also big and semiample. Applying \Cref{lem:descent} with $L'=mL$, we obtain a stability condition \(\sigma_Y\) on \(Y\) satisfying \(\sigma_Y\otimes L'|_Y\lessapprox\sigma_Y\), contradicting the induction hypothesis.
\end{proof}

We are now ready to complete the proof of the main result in this section.

\begin{proof}[Proof of \Cref{thm:variety}]
First, the equality $\Gd\Db(X)=n$ for any smooth projective variety $X$ follows from \Cref{pro:Gd=n}. If $-K_X$ is ample, then $\Gd\Db(X)$ is reachable by \Cref{thm:main} and \Cref{cor:key}.

Assume $K_X\in\Pic^0(X)$. By \cite[Corollary~3.5.2]{Pol07}, a connected algebraic group acts trivially on a stability condition whenever it acts trivially on the
numerical Grothendieck group. In particular, it gives $\sigma\otimes L=\sigma$ for every $L\in\Pic^0(X)$. Thus $\sigma\otimes K_X=\sigma$. Then, by \Cref{cor:key}, $\gldim$ is constantly equal to $n$ on $\Stab(\Db(X))$.

Finally, suppose that $K_X$ is big and nef. By \cite[Theorem~3.3]{KollarMori}, \(K_X\) is semiample. The preceding lemma shows that there is no stability condition such that \(\sigma\otimes K_X\lessapprox\sigma\). Therefore, by \Cref{cor:key}, we have \(\gldim(\sigma)>n\) for every stability condition $\sigma$.
\end{proof}

\begin{remark}
Let \(X\) be a smooth projective Fano variety. Applying the inducing strategy in \cite{MaWuZh:2026} yields the existence of stability conditions on the derived category of coherent sheaves on the total space of the canonical bundle \(K_X\), supported on zero section. Independently, this existence result has also been obtained recently by \cite{li2026stabilityconditionsmodulispaces} via a different method. 
\end{remark}

\subsection{From stability conditions to pre-stability conditions}
In this section, we consider the reachability problem for pre-stability conditions on smooth projective varieties. Let $\PreStab(\D)$ denote the set of pre-stability conditions on $\D$ and set
\[
\Gd^{\pre}\D\coloneq\inf_{\sigma\in\PreStab(\D)}\gldim(\sigma).
\]
Throughout this section, stability conditions are considered with respect to the full-rank lattice given by the (numerical) Grothendieck group. It is clear that $\Stab(\D)\subseteq\PreStab(\D)$ with equality when the support property is automatic. The following result shows
that this equality is exceptional.  Consequently, extending the
reachability problem to pre-stability conditions is a genuine enlargement
of the reachability problem.

\begin{proposition}
\label{prop:support}
The following statements hold.
\begin{enumerate}
  \item Let \(Q\) be a finite connected acyclic quiver and
  \(\D_Q\coloneq\Db(\modcat Q)\). Then we have
  \[
    \PreStab(\D_Q)=\Stab(\D_Q)
    \quad\Longleftrightarrow\quad
    Q\text{ is Dynkin or Euclidean}.
  \]
  \item Let \(X\) be a connected smooth projective complex variety of
  positive dimension. Then we have
  \[
    \PreStab(\Db(X))=\Stab(\Db(X))
    \quad\Longleftrightarrow\quad X\cong\bP.
  \]
\end{enumerate}
\end{proposition}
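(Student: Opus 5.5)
The plan is to treat the two parts separately. In each, the "if" direction is an automatic-support argument and the "only if" direction is a construction of a pre-stability condition with a semistable class that violates the support property.

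For (1), with \(\Lambda=K_0(\D_Q)\cong\Z^{Q_0}\), the support property fails exactly when some sequence of semistable objects has \(\lVert v(E)\rVert/\lvert Z(E)\rvert\) unbounded. For the ``if'' direction I would use that for Dynkin and Euclidean quivers, every indecomposable object in \(\D_Q\) has class a root (real or imaginary) of the associated root system, up to sign and shift. For Dynkin there are finitely many roots, so the ratio is bounded once \(Z\) is nonvanishing on them, and \(Z\) is nonvanishing on classes of semistable objects. For Euclidean, the roots are \(\alpha+k\delta\) with \(\alpha\) in a finite set. If \(Z(\delta)\neq0\) this gives linear growth of both \(\lVert v\rVert\) and \(\lvert Z\rvert\), hence a bound. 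If \(Z(\delta)=0\), this contradicts the fact that regular simple modules in a homogeneous tube, or objects of class \(\delta\), are semistable up to shift: every nonzero class realized by a semistable indecomposable must have \(Z\neq0\). Semistable objects decompose into indecomposable semistable summands of the same phase, so bounding the indecomposables suffices.

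For the ``only if'' direction, when \(Q\) is wild there are imaginary roots \(\beta\) with \(\langle\beta,\beta\rangle<0\). I would take the standard heart \(\modcat Q\) with a central charge \(Z\) whose kernel in \(\Lambda_\R\) meets the imaginary cone only along an irrational ray, while \(Z\) is still a stability function on \(\modcat Q\), i.e. \(Z\) maps every \(\dim\) vector of a module to the upper half plane \(\HH\cup\R_{<0}\). Then I would produce Schur representations whose dimension vectors approach the kernel ray projectively, with \(\lvert Z\rvert/\lVert v\rVert\to0\). The HN property holds automatically because \(\modcat Q\) is of finite length with discrete \(\Im Z\) if \(\Im Z\) is taken rational; only \(\Re Z\) needs to be irrational. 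This gives an element of \(\PreStab\setminus\Stab\). The delicate point is ensuring those Schur objects are \(Z\)-semistable. I would choose \(Z\) so that the target classes have maximal phase, making them semistable by a King-type criterion, using that the imaginary cone is dense with Schur roots (Kac, Schofield).

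For (2), when \(X\cong\bP\) I would use the classification of \cite{Macri} / Okada. Every pre-stability condition is, up to \(\widetilde{\mathrm{GL}}_2^+(\R)\), either geometric or algebraic from an exceptional pair \((\OO(k),\OO(k+1))\). In both cases the semistable classes form a finite-rank situation in \(K_0=\Z^2\) with \(Z\) injective on the relevant cone, which forces support. Equivalently, \(\D_{\bP}\simeq\D_{K_2}\) with \(K_2\) the Kronecker quiver, which is Euclidean, so this case follows from (1). For \(X\not\cong\bP\), I would produce a pre-stability condition without support. Here \(K_{\rm num}(X)\) has rank at least \(3\) when \(\dim X\ge2\); for curves of genus \(g\ge1\) the rank is \(2\). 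For curves of positive genus, I would take the heart \(\Coh(C)\) with \(Z=-\deg+\ii\,\rk\) replaced by \(Z=-\deg+\lambda\rk+\ii\rk\). That alone does not break support, so instead I would use a tilted heart on which torsion sheaves and some line bundles have phase in a way that forces \(Z(E_m)/\lVert v(E_m)\rVert\to0\). In higher dimension I would take the geometric-type central charge \(Z\) restricted to a sublattice-degenerate direction, e.g. \(Z\) vanishing on an irrational direction in \(\Lambda_\R\) near classes \(v(\OO_C(D))\) of sheaves on curves in \(X\), which exist in every positive-dimensional non-\(\bP\) case. The example suggested by \cite{MeinhardtPartsch}, namely \Cref{ex:prestab-without-stab}, is exactly what I would aim to reproduce.

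I expect this last step, constructing a genuine pre-stability condition violating support on every \(X\not\cong\bP\), and in particular verifying the Harder--Narasimhan property when \(\Im Z\) has non-discrete image, to be the main obstacle. A reduction would help: if \(X\) has a curve \(C\) of positive genus or \(\dim X\ge2\), I would try to induce the bad pre-stability condition from a lower-dimensional or quiver model via the pullback/pushforward machinery of \cite{Li2026}, \(f^\sharp\) or \(f_\sharp\), which preserves slicings and the HN property and transports the failure of support.
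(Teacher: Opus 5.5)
Your wild-quiver construction cannot work. If $Z$ is any stability function on $\modcat Q$, the finitely many values $Z(S_i)$ lie in $\HH\cup\R_{<0}$. Hence they lie in a closed sector of opening angle $<\pi$, so there is $c>0$ with $\lvert Z(v)\rvert\ge c\lVert v\rVert$ for every $v\in\Z_{\ge0}^{Q_0}$. Thus $\ker Z$ meets the positive cone, which contains the imaginary cone, only in $0$. More generally, every pre-stability condition on $\D_Q$ whose heart has finite length automatically has the support property, since its finitely many simple classes form a basis of $K_0(\D_Q)$. No irrational choice of $\operatorname{Re}Z$ and no King-type semistability argument can change this.

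The missing idea is the paper's collapsing construction. Start from a genuine stability condition $\sigma=(\cP,Z)$ with semistable $E_j\in\cP(\phi_j)$, $\phi_j\to\phi_0$, and $\cP(\phi_0)=0$. Pass to the heart $\A=\cP((\phi_0,\phi_0+1])$ and take the real-valued charge $W=-\operatorname{Im}(e^{-\pi\ii\phi_0}Z)$, which is strictly negative on nonzero objects of $\A$; declare all of $\A$ semistable of phase $1$. The Harder--Narasimhan property is then just the cohomology filtration of the $t$-structure, so the obstacle you single out disappears. Meanwhile $\lvert W(E_j)\rvert=\lvert Z(E_j)\rvert\,\lvert\sin\pi(\phi_j-\phi_0)\rvert$ (after shifting $E_j$ into $\A$) forces $\lvert W\rvert/\lVert v\rVert\to0$.

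The input $\sigma$ comes from different sources in each case:
\begin{enumerate}
\item for wild $Q$, from \cite[Propositions~3.32 and~3.34]{DHKK};
\item for a positive-genus curve, from slope stability, since there are stable bundles of every rational slope and $\phi_0$ can be chosen outside the countable set of phases;
\item for $\dim X\ge2$, from $\sigma_X=i^\sharp\sigma$.
\end{enumerate}
In the last case the paper restricts to a positive-genus curve $j\colon C\hookrightarrow X$ only to see that $\sigma_X$ has a dense countable set of phases, because $j^\sharp$ preserves semistable objects and phases. It then collapses on $X$ itself. Your proposed transport goes the wrong way: $j^\sharp$ produces stability conditions on $C$, not on $X$. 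And $j_\sharp$ gives no slicing for a closed embedding, since any object supported away from $C$ would lie in every $j_\sharp\cP(\phi)$.

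The ``if'' direction of (1) also has a hole. In the Euclidean case, $Z(\delta)=0$ is not contradictory, because objects of class $\delta$ need not be semistable. On $\Db(\bP)\simeq\Db(K_2)$ the heart $\langle\OO(k-1)[1],\OO(k)[-1]\rangle$ is semisimple. Setting $Z(\OO(k-1)[1])=Z(\OO(k)[-1])=-1$ defines a genuine stability condition (Okada's case $d=2$) with $Z(\delta)=Z(\OO(k))-Z(\OO(k-1))=0$, and in it no skyscraper sheaf is semistable. What must be shown in this case is that only finitely many classes $\alpha+n\delta$ carry semistable objects. The paper gets this from \cite[Theorem~4.18]{Sun}: the heart is either of finite length, where support is automatic by the sector argument above, or it contains an object of class $\pm\delta$, which forces $Z(\delta)\neq0$.

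Your treatment of $X\cong\bP$ reduces to this Euclidean case, so it inherits the same gap. The Okada--Macr\`i classification you also mention does not rescue it: it classifies stability conditions, so applying it to arbitrary pre-stability conditions assumes what is to be proved.
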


\begin{proof}
We first consider a triangulated category $\D$ with $\rk K_0(\D)<\infty$. Suppose a stability condition
\(\sigma=(Z,\cP)\) on $\D$ has semistable objects \(E_j\in\cP(\phi_j)\) with $\phi_j\to\phi_0$ and \(\cP(\phi_0)=0\). Let
\(\A=\cP(\phi_0,\phi_0+1]\) and define
\[
  W=-\Im(e^{-\pi\ii\phi_0}Z).
\]
By definition, we have $W(E)\in\R_{<0}$ for every $0\ne E\in\A$. Consider a new slicing $\cQ$ determined by
\[
\cQ(1+n)=\A[n],\text{ for } n\in\Z,
\]
and 
\[
\cQ(\phi)=0\text{ if }\phi\notin\Z.
\]
Then $(\cQ,W)$ is a pre-stability condition on $\D$. Fix any norm $\|\cdot\|$ on the finite-rank lattice. Then
\[
  \frac{|W(E_j)|}{\|E_j\|}=\frac{|Z(E_j)||\sin\pi(\phi_j-\phi_0)|}{\|E_j\|}
  \leq\sup_{\|E\|=1}|Z(E)|
  \bigl|\sin\pi(\phi_j-\phi_0)\bigr|\longrightarrow0,
\]
which implies $(\cQ,W)\notin\Stab(\D)$.

For a wild acyclic quiver, such a sequence exists by
\cite[Propositions~3.32 and~3.34]{DHKK}, so the support property does not hold automatically. For
a Dynkin quiver, since there are only finitely many indecomposable objects up to
sign, support property follows immediately. For a Euclidean quiver, let \(\delta\) be
the minimal imaginary root.  By \cite[Theorem~4.18]{Sun}, the heart is either of finite length, or
contains an object of class \(\pm\delta\).  The first case has finitely many
simple classes.  In the second, \(Z(\delta)\neq0\), and every indecomposable
object is, up to sign,  \(n\delta\) or \(\alpha+n\delta\) with \(\alpha\)
in a finite set.  Hence,
\[
  \frac{|Z(\alpha+n\delta)|}{\|\alpha+n\delta\|}
  \longrightarrow\frac{|Z(\delta)|}{\|\delta\|}>0,
\]
which proves (1).

Let $C$ be a curve of positive genus. By \cite{Macri}, for every coprime pair $(r,d),r>0$, there is a stable vector bundle of rank $r$ and degree $d$. Then the observation above produces a pre-stability condition without support property.  If
\(\dim X\geq2\), choose a positive-genus smooth complete-intersection curve
\(j:C\hookrightarrow X\) and a very ample embedding $i:X\hookrightarrow\PP^N$. As in the proof of \Cref{thm:variety}, there is a stability condition $\sigma\in\Stab(\PP^N)$ whose restrictions $\sigma_X=(\cP_X,Z_X)=i^{\sharp}\sigma\in\Stab(\Db(X))$ and $\sigma_C=j^{\sharp}\sigma_X\in\Stab(\Db(C))$ are stability conditions. Since restriction preserves semistable objects and their phases,
\[
\{\phi\in\R\colon\cP_X(\phi)\ne\{0\}\}
\]
has a dense, countable subset in $\R$. Similarly, the above observation gives a pre-stability condition without support property. Finally, due to
\(\Db(\PP^1)\simeq\Db(K_2)\), support property on $\bP$ follows from the Euclidean case of (1).
\end{proof}

The following example shows that there are triangulated categories with pre-stability conditions but no stability conditions; that is, $\PreStab(\D)\ne\emptyset$ while $\Stab(\D)=\emptyset$.

\begin{example}
\label{ex:prestab-without-stab}
Let
\[
  X=\PP^1\times\PP^1,
  \qquad
  \D=\Db(\Coh^{(1)}X),
\]
where $\Coh^{(1)}X$ is obtained by quotienting coherent sheaves by zero-dimensional sheaves on $X$. Then
\[
  K_{\rm num}(\D)\cong\Z\oplus\Pic(X)\cong\Z^3,
  \qquad
  \PreStab(\D)\neq\varnothing,
  \qquad
  \Stab(\D)=\varnothing.
\]
Indeed, the first isomorphism follows from \cite[Proposition~3.16]{MeinhardtPartsch}. Write
\[
  \Pic(X)=\Z H_1\oplus\Z H_2.
\]
Thus the class of an object of rank \(r\) and first Chern class
\(pH_1+qH_2\) is identified with \((r,p,q)\in\Z^3\).
On
\[
  K_{\rm num}(\D)_{\R}\cong\R^3
\]
we choose the Euclidean norm
\[
  \|(r,p,q)\|
  :=
  \sqrt{r^2+p^2+q^2}.
\] By
\cite[Theorem~4.6 and the subsequent remark]{MeinhardtPartsch}, every
locally finite pre-stability condition is, up to the
\(\widetilde{\mathrm{GL}}^+(2,\R)\)-action, given by the heart
\(\Coh^{(1)}(X)\) and
\[
  Z_{a,b}(E)=-(ap+bq)+\ii\rk(E),
  \qquad c_1(E)=pH_1+qH_2,\qquad a,b>0.
\]
We claim that none of them satisfies the support property on $K_0(\D)$.
For every $a,b>0$, there are integral pairs $(p_j,q_j)$ such that
\[\|(p_j,q_j)\|\to\infty,\quad|ap_j+bq_j|\leq1.\]
If \(a/b\in\QQ\), write $a/b=m/n$
with \(m,n>0\) coprime. We may then take $(p_j,q_j)=j(n,-m)$. If \(a/b\notin\QQ\), Diophantine approximation
gives infinitely many integers \(m_j,n_j\), with \(n_j\to\infty\), such
that
\[
  \left|
    \frac ab-\frac{m_j}{n_j}
  \right|
  <
  \frac1{n_j^2}.
\]
Taking $p_j=n_j$ and $q_j=-m_j$, we obtain
\[
  |ap_j+bq_j|
  =
  b\left|
    \frac ab n_j-m_j
  \right|
  <
  \frac b{n_j}
  \longrightarrow0.
\]
Now consider the line bundles \(L_j=\OO_X(p_j,q_j)\), which are \(Z_{a,b}\)-stable. Indeed, any nonzero proper subobject
of \(L_j\) in \(\Coh^{(1)}(X)\) is isomorphic to
\(L_j(-D)\) for some nonzero effective divisor \(D_j=u_jH_1+v_jH_2\), with
\(u_j,v_j\ge0\). Hence
\[
  Z_{a,b}(L_j(-D))=Z_{a,b}(L_j)+(au_j+bv_j),
\]
where \(au_j+bv_j>0\). Thus $\phi(L_j(-D_j))<\phi(L_j)$. Note that
\[
  |Z_{a,b}(L_j)|\leq\sqrt2,
  \qquad \|[L_j]\|=\|(1,p_j,q_j)\|\longrightarrow\infty.
\]
Thus $Z_{a,b}$ fails the support property on \(K_{\rm num}(\D)\). Since the support property is invariant under the
\(\widetilde{\mathrm{GL}}^+(2,\R)\)-action and implies local finiteness, we obtain
\[
  \PreStab(\D)\neq\varnothing,
  \qquad
  \Stab(\D)=\varnothing.
\]
\end{example}

The following result shows that the arguments used in \Cref{thm:variety} are largely independent of the
support property. 

\begin{proposition}
\label{prop:pre-reachability}
Let \(X\) be a smooth connected projective variety of dimension \(n\).
We have
\[
  \Gd^{\pre}\Db(X)=n.
\]
Moreover:
\begin{enumerate}
  \item if \(-K_X\) is ample, the value \(n\) is reached by a genuine
  stability condition;
  \item if \(K_X\simeq\OO_X\), every pre-stability condition has global
  dimension \(n\);
  \item if \(K_X\in\Pic^0(X)\), the value \(n\) is reachable on the pre-stability space;
  \item if \(K_X\) is big and nef, there is no pre-stability condition with
  global dimension \(n\).
\end{enumerate}
\end{proposition}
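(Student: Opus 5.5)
The plan is to check which ingredients of the proof of \Cref{thm:variety} actually use the support property, and then adapt each part.

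\textbf{The value of the infimum.} The upper bound is immediate. Since $\Stab(\Db(X))\subseteq\PreStab(\Db(X))$, \Cref{pro:Gd=n} gives $\Gd^{\pre}\Db(X)\le n$.

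For the lower bound I will not use \Cref{thm:kot-lower-bound}, since its proof may depend on the support property. Instead I will redo the argument of \Cref{lem:kot} directly. Take $E$ semistable and let $A$ be the top HN factor of $\Sfun E=E\otimes K_X[n]$. Then $\Hom(A,\Sfun E)\neq0$, so Serre duality gives $\Hom(E,A)\neq0$. Hence
\[
\phi^+(\Sfun E)-\phi^+(E)\le\gldim(\sigma)
\]
for every pre-stability condition, because only the slicing is used. Iterating $m$ times gives $\phi^+(\Sfun^mG)-\phi^+(G)\le m\,\gldim(\sigma)$. The HN filtration also controls $e_-$: since $\Hom(G,\Sfun^m G[k])\neq0$ forces $\phi^-(G)\le\phi^+(\Sfun^mG)+k$, we get $-e_-(G,\Sfun^mG)\le m\,\gldim(\sigma)+\phi^+(G)-\phi^-(G)$. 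Dividing by $m$ and using $\Sdimup=n$ yields $\gldim(\sigma)\ge n$. This argument never used the support property or local finiteness.

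\textbf{Characterising $\gldim=n$.} The proof of \Cref{lem:gls} is purely slicing-theoretic, so it holds on $\PreStab$. Consequently \Cref{cor:key} extends: for a pre-stability condition $\sigma$, $\gldim(\sigma)=n$ if and only if $\sigma\otimes K_X\lessapprox\sigma$. One point needs care. The implication (2)$\Rightarrow$(1) of \Cref{lem:phase-order-tests} was stated for stability conditions, but testing on semistable objects directly is just the definition of $\lessapprox$, so I will phrase the argument in those terms.

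\textbf{The individual cases.}
\begin{enumerate}
\item If $-K_X$ is ample, case (1) follows from \Cref{thm:variety}(1), since a stability condition is in particular a pre-stability condition.
\item If $K_X\simeq\OO_X$, then $\sigma\otimes K_X=\sigma$ trivially, and the extended \Cref{cor:key} gives $\gldim(\sigma)=n$ for every pre-stability condition.
\item If $K_X\in\Pic^0(X)$, I will take any genuine stability condition. Such conditions exist by the construction in \Cref{pro:Gd=n}, and \Cref{thm:variety}(2) shows they have $\gldim=n$, which gives reachability. Polishchuk's argument relies on a topology in which the $\Pic^0$-orbit is continuous, and that is unavailable without the support property; this is why constancy is not claimed here.
\item If $K_X$ is big and nef, I must show that no pre-stability condition satisfies $\sigma\otimes L\lessapprox\sigma$ when $L$ is big and semiample. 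I will run the same induction as before.
\end{enumerate}

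\textbf{Main obstacle.} The difficulty lies in case (4). There are two places where the earlier argument may fail.

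The first is the curve case. It used Macr\`i's classification, which requires the support property and in particular local finiteness. I would replace it with an argument that only uses semistability. Take a pre-stability condition $\sigma$ on $C$, choose a line bundle $L'$, and let $A$ be its top HN factor. A nonzero map $L'\to L'\otimes L^m$ has a cokernel that is a torsion sheaf of length $m\deg L$. Combining this with $\sigma\otimes L^m\lessapprox\sigma$ and comparing central charges, whose imaginary parts are bounded along the orbit while the degree grows, I expect to obtain a contradiction. For $\bP$ the support property is automatic by \Cref{prop:support}, so that case is already covered.

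The second is the descent step, \Cref{lem:descent}. It needs Li's restriction result \cite[Proposition 1.13]{Li2025} for pre-stability conditions. I will check whether that proof uses the support property; if it does, only the HN-filtration part is needed to produce a pre-stability condition on $D$. This verification is where I expect most of the work to lie.
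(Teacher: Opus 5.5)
Your handling of the infimum and of cases (1)--(3) matches the paper, which also just observes that \Cref{thm:kot-lower-bound}, \Cref{lem:kot}, \Cref{lem:gls} and \Cref{cor:key} are slicing-theoretic. One small point: to iterate your lower-bound inequality you need it for non-semistable $E$ as well, and this follows by tracing $\Hom(E,A)\neq0$ back to an HN factor of $E$. The descent step is likewise only asserted in the paper.

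\textbf{The gap: the curve base case of (4).} Your sketch would work if line bundles were $\sigma$-semistable. In that case $\Hom(L',L'\otimes L^m)\neq0$ together with $\sigma\otimes L^m\lessapprox\sigma$ forces $\phi(L'\otimes L^m)=\phi(L')$. For a suitable choice of $L'$ and $m$, this is incompatible with $Z(L'\otimes L^m)=Z(L')+m\deg L\cdot Z(\OO_x)$. But semistability of line bundles is exactly what Macr\`i's classification supplied, and it is unavailable for a pre-stability condition. For a non-semistable object you only get $\phi^\pm(L'\otimes L^m)\le\phi^\pm(L')$, which puts no constraint on its total charge. With an arbitrary positive twist $L$ there is no evident source of semistability. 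So ``I expect to obtain a contradiction'' is the missing argument, not a routine check.

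\textbf{How the paper gets the needed structure.} It descends the condition $\gldim=n$ itself rather than an arbitrary twist.
\begin{enumerate}
\item Suppose $\sigma\otimes K_X\lessapprox\sigma$ and $D\in|mK_X|$. The projection formula $i_*(F\otimes K_X|_D)\cong i_*F\otimes K_X$ gives $\sigma_D\otimes K_X|_D\lessapprox\sigma_D$.
\item Adjunction $K_D=(m+1)K_X|_D$ then yields $\sigma_D\otimes K_D\lessapprox\sigma_D$, i.e.\ $\gldim(\sigma_D)=n-1$, with $K_D$ again big and semiample.
\item Iterating ends on a curve $C$ of genus $\ge2$ carrying a numerical pre-stability condition of global dimension $1$.
\item Now \cite[Proposition~3.5]{Qiu} makes every indecomposable object semistable.
\end{enumerate}
From there the paper normalizes $Z(r,d)=ar-d$ with skyscrapers of phase $1$, so stable bundles have phase in $[0,1]$ and $\operatorname{Im}a\ge0$. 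The three remaining cases are:
\begin{enumerate}
\item If $\operatorname{Im}a>0$, then $\phi(K_C)>\phi(\OO_C)$, and $\Hom(\OO_C,K_C[1])\neq0$ forces $\gldim>1$.
\item If $a\in\QQ$, a stable bundle of slope $a$ has zero charge, which is impossible.
\item If $a\in\R\setminus\QQ$, a stable $E$ with $a-\deg K_C<\mu(E)<a$ has $\phi(E)=0$ and $\phi(E\otimes K_C)=1$, so $\Hom(E,E\otimes K_C[1])\neq0$ forces $\gldim\ge2$.
\end{enumerate}
Once total semistability is available, your own central-charge comparison with $L=K_C$ would also finish. The fix is therefore to run the induction on $\gldim=n$, not on $\sigma\otimes L\lessapprox\sigma$ for a general big and semiample $L$.
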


\begin{proof}
The proofs of \Cref{thm:kot-lower-bound} and \Cref{lem:kot} use only finite HN
filtrations and Serre duality. Hence they give
\(\gldim(\sigma)\geq n\) for every pre-stability condition $\sigma$ on \(\Db(X)\).
The sequence of stability conditions constructed in \Cref{pro:Gd=n} has global dimension
converging to \(n\), which proves \(\Gd^{\pre}\Db(X)=n\).  The statements (1) and (2)
follow from \Cref{thm:main,cor:key}. Assertion (3) follows from the stability conditions used
in the proof of \Cref{thm:variety}.

For (4), suppose, for a contradiction, that there is a pre-stability condition $\sigma_n\in\PreStab(\Db(X))$ with $\gldim(\sigma_n)=n$. Note that the proofs of \Cref{cor:key,lem:descent} remain valid for pre-stability conditions. Thus the argument in \Cref{thm:variety} finally produces a numerical pre-stability condition on a curve of genus $g\ge2$. It remains only to show that a curve $C$ of genus \(g\geq2\) does not admit a numerical
pre-stability condition of global dimension one.   Suppose such a pre-stability condition
\(\sigma=(Z,\cP)\) exists. By \cite[Proposition 3.5]{Qiu}, every indecomposable object is $\sigma$-semistable. After applying the $\C$-action, we may assume that $Z(r,d)=ar-d$ for some $a\in\C$. Let $E$ be any stable vector bundle. Since 
\[
\Hom(E,\OO_x)\ne0,\quad\Hom(\OO_x,E[1])\ne0,
\]
and $\phi(\OO_x)=1$ for any $x\in C$, we have $\phi(E)\in[0,1]$, which implies that $\Im a\ge0$. If $\Im a>0$, then $\phi(K_C)>\phi(\OO_C)$. However, we have $\Hom(\OO_C,K_C[1])\ne0$. Thus
\[
\gldim(\sigma)\ge1+\phi(K_C)-\phi(\OO_C)>1,
\]
which contradicts the assumption. Therefore $a\in \R$. If $a\in\QQ$, then the central charge of a stable vector bundle of slope $a$ is zero, which is impossible. Suppose now $a\in\R\backslash\QQ$. Since rational slopes of stable bundles are dense in $\R$, we can choose a stable vector bundle $E$ with 
\[
a-\deg K_C<\mu(E)<a. 
\]
Then $Z(E)\in\R_{>0}$ and hence $\phi(E)=0$. In this case, $\phi(E\otimes K_C)=1$ and Serre duality gives 
\[
\Hom(E,E\otimes K_C[1])\ne0.
\]
Thus, $\gldim(\sigma)\ge2$ which is again a contradiction. 
\end{proof}

\begin{remark}
For genuine numerical stability conditions, \(K_X\in\Pic^0(X)\) forces
\(\sigma\otimes K_X=\sigma\).  Indeed, tensoring by any
\(L\in\Pic^0(X)\) acts trivially on the numerical Grothendieck group, so
\(\sigma\) and \(\sigma\otimes L\) have the same central charge.  By
\Cref{thm:deformation}, the fiber of the forgetful map is discrete.
Since \(\Pic^0(X)\) is connected, the orbit
\(L\mapsto\sigma\otimes L\) is constant. For pre-stability conditions, this discreteness
argument does not work in general.  Thus \(K_X\in\Pic^0(X)\) gives reachability, but not constancy in
our statement.
\end{remark}

\section{g-Compactifications of stability spaces for curves}
\label{sec:compactify}
In this section, we study compactifications of stability
spaces of curves that are compatible with the global dimension function. Recall that the global dimension function is continuous, invariant under exact autoequivalences and
the \(\C\)-action on stability conditions, which implies that the global dimension function descends to a
continuous map
\[
  \gldim:
  \cM(\D)\longrightarrow
  [0,\infty].
\] 
It is therefore natural to study the compactification problem on the quotient space
\[
  \cM(\D)
  :=
  \Aut(\D)\backslash\Stab_\Lambda(\D)/\C,\]
for a triangulated category $\D$. 
For simplicity, we denote by $\cM_C$ the quotient space $\cM(\Db(C))$ associated with the curve $C$.

\begin{definition}\label{def:g-compactification}
A \emph{\(g\)-compactification} of \(\cM(\D)\) is a compact Hausdorff
space \(\overline{\cM}(\D)^{\,g}\), together with an open dense embedding
\[
  \jmath:
  \cM(\D)\hookrightarrow\overline{\cM}(\D)^{\,g},
\]
such that the global dimension function admits a continuous extension
\[
  \overline{\gldim}:
  \overline{\cM}(\D)^{\,g}
  \longrightarrow
  [0,\infty]
\]
satisfying
\[
  \overline{\gldim}\circ\jmath=\gldim.
\]
\end{definition}

Whenever possible, we require the boundary points to carry explicit limiting
stability data. For curves with positive genus, the boundary will be described by
limiting central charges and hearts. For the projective line \(\PP^1\), every boundary point will be represented by a quasi-convergent path of stability conditions. This will be made precise in the following sections.

\subsection{Positive-genus curves}
Let \(C\) be a smooth projective curve of genus \(g\ge2\). By \cite{Macri}, we have
\[
  \Stab(\Db(C))\simeq\C\times\HH,
\]
where $\HH$ denotes the complex upper half plane. Up to \(\C\)-action, a stability condition can be represented by
\(\tau=\beta+\ii\alpha\in\HH\), with heart \(\Coh(C)\) and central charge
\[
Z_{\alpha,\beta}(r,d)=-d+(\beta+\ii\alpha)r, \qquad \text{ for } \alpha>0.
\]

\begin{proposition}\label{pro:positive-formula}
Let $C$ be a smooth projective curve of genus \(g=g(C)\ge1\). Then
\[
  \gldim(\sigma_{\alpha,\beta})
  =
  1+\frac2\pi
  \arctan\!\left(\frac{g-1}{\alpha}\right).
\]
In particular, if \(g=1\), then \(\gldim\) is identically \(1\), while if
\(g\ge2\), its image is \((1,2)\).
\end{proposition}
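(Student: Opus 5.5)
We compute the supremum directly from the description of $\sigma_{\alpha,\beta}$: the heart is $\Coh(C)$ and the central charge is $Z(r,d)=-d+(\beta+\ii\alpha)r$. By \cite{Macri}, the stable objects are exactly the shifts of stable vector bundles and of skyscraper sheaves $\OO_x$. The point sheaves have phase $1$. A stable bundle of slope $\mu$ has phase
\[
\phi(\mu)=\tfrac1\pi\operatorname{arccot}\bigl((\mu-\beta)/\alpha\bigr)\in(0,1),
\]
which is strictly increasing in $\mu$ and tends to $0$ as $\mu\to-\infty$ and to $1$ as $\mu\to+\infty$. It suffices to take the supremum in the definition of $\gldim$ over pairs of stable objects: a nonzero map between semistable objects produces a nonzero map between some stable factors of the same phases. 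Since $\Coh(C)$ is hereditary, only $\Hom^0$ and $\Hom^1$ between objects of the heart matter. $\Hom^0$ between stable objects of the heart has phase gap less than $1$, so the interesting gaps come from $\Ext^1$. These give $1+\phi(F)-\phi(E)$ whenever $\Ext^1(E,F)\neq0$.

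\textbf{Reduction.} By Serre duality, $\Ext^1(E,F)\cong\Hom(F,E\otimes K_C)^\vee$.
\begin{itemize}
\item If $F=\OO_x$ and $E$ is a bundle, then $\Ext^1(E,\OO_x)=0$. If $E=\OO_x$ and $F$ is a bundle of slope $\mu$, the gap is $1+\phi(\mu)-1<1$. The case $E=F=\OO_x$ gives exactly $1$.
\item The main case is stable bundles $E,F$ with $\Hom(F,E\otimes K_C)\ne0$. Stability forces $\mu(F)\le\mu(E)+2g-2$, so the gap is at most $1+\phi(\mu+2g-2)-\phi(\mu)$ with $\mu=\mu(E)$.
\item Conversely, the choice $F=E\otimes K_C$ gives $\Hom\neq0$, and stable bundles exist for a dense set of rational slopes by \cite{Macri}. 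So the supremum equals
\[
1+\sup_{\mu\in\R}\bigl(\phi(\mu+2g-2)-\phi(\mu)\bigr).
\]
\end{itemize}

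\textbf{Calculus step.} Set $c=(g-1)/\alpha$ and $t=(\mu+g-1-\beta)/\alpha$. The difference becomes
\[
\tfrac1\pi\bigl(\operatorname{arccot}(t-c)-\operatorname{arccot}(t+c)\bigr),
\]
the angle subtended by a segment of length $2c$ viewed from height $1$. It is maximized at $t=0$ by symmetry, which follows from the derivative or from concavity. The maximum value is $\tfrac2\pi\arctan c$, which yields the stated formula. The supremum is attained only at the real slope $\mu=\beta-(g-1)$, but since slopes of stable bundles are dense and $\phi$ is continuous, the supremum equals this value.

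\textbf{Consequences.} For $g=1$ the formula gives $\gldim\equiv1$. For $g\ge2$, letting $\alpha$ range over $(0,\infty)$ shows the image is exactly $(1,2)$.

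\textbf{Main obstacle.} The main obstacle is the reduction step: justifying that the supremum over semistable pairs equals the supremum over stable bundles with $F=E\otimes K_C$. One must also confirm that the $\Ext^1$ pairs involving torsion sheaves never exceed this value. Both follow from the bound $\mu(F)\le\mu(E)+2g-2$ together with the monotonicity of $\phi$.
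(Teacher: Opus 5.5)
Your proposal is correct and follows essentially the paper's argument: Serre duality reduces the computation to $1+\sup(\phi(E\otimes K_C)-\phi(E))$ over stable bundles, and maximizing the subtended angle at $\mu=\beta-(g-1)$ gives $\frac{2}{\pi}\arctan\bigl(\frac{g-1}{\alpha}\bigr)$; your case analysis (the hereditary reduction, the torsion cases, and the slope bound $\mu(F)\le\mu(E)+2g-2$) supplies the justification for the ``standard phase bound'' that the paper only asserts. One cosmetic slip: the phase should read $\phi(\mu)=\frac1\pi\operatorname{arccot}\bigl((\beta-\mu)/\alpha\bigr)$ rather than $\frac1\pi\operatorname{arccot}\bigl((\mu-\beta)/\alpha\bigr)$, although your calculus step already uses the correct sign.
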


\begin{proof}
Set \(\ell=\deg K_C=2g-2\). Tensoring by \(K_C\) preserves slope semistability, sends a bundle
of slope \(\mu\) to one of slope \(\mu+\ell\), and preserves torsion
sheaves. Serre duality gives the standard phase bound
\[
  \gldim(\sigma)
  =
  1+
  \sup_{E\ \sigma\text{-semistable}}
  \bigl(
    \phi_\sigma(E\otimes K_C)-\phi_\sigma(E)
  \bigr).
\]
For a stable bundle of slope \(\mu\), the relevant phase difference is
\[
  \frac1\pi\left(
    \arg(\beta-\mu-\ell+\ii\alpha)
    -\arg(\beta-\mu+\ii\alpha)
  \right).
\]
If $g=1$, then $\ell=0$ and the phase difference is zero. Suppose now that $g\geq2$. Rational slopes of stable bundles are
dense in $\R$, and the difference is maximal at $\beta-\mu=\ell/2$. Its maximum is
\[
  \pi-2\arctan\!\left(\frac{2\alpha}{\ell}\right)
  =2\arctan\!\left(\frac{\ell}{2\alpha}\right)=2\arctan\!\left(\frac{g-1}{\alpha}\right),
\]
which implies the required formula.
\end{proof}

Kikuta--Koseki--Ouchi  \cite{KKO} construct a Thurston compactification of the set of geometric stability conditions
\(\Geo(C)/\C\) by projectivized mass functions. For positive
genus curves, they identify the compactification with
\[
  \overline{\HH}=\HH\cup\R\cup\{\infty\}\simeq\overline\Delta,
\]
where $\overline{\Delta}$ is the closure of the unit disk 
\[
\Delta=\{z\in\C\colon|z|<1\}.
\]
However, this compactification is compatible with $\gldim$ only in genus $g=1$.
\begin{proposition}\label{prop:framed-failure}
Let $C$ be a smooth projective curve of genus $g\ge1$. Then \(\gldim\) extends continuously to \(\overline{\HH}\) if and only if
\(g=1\).
\end{proposition}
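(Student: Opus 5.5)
The plan is to use the explicit formula of \Cref{pro:positive-formula}, which describes $\gldim$ on $\HH=\{\beta+\ii\alpha\}$ as a function of $\alpha$ alone:
\[
  \gldim(\sigma_{\alpha,\beta})=1+\frac2\pi\arctan\!\left(\frac{g-1}{\alpha}\right).
\]
I will then study its boundary behaviour in $\overline{\HH}=\HH\cup\R\cup\{\infty\}$. Here the topology near $\infty$ is the one of the closed disk $\overline\Delta$ coming from the Kikuta--Koseki--Ouchi identification; in the Cayley-type coordinates, a neighbourhood of $\infty$ contains all points with $|\tau|$ large.

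For $g=1$ the formula gives $\gldim\equiv1$ on $\HH$. The constant function $1$ on $\overline{\HH}$ is then a continuous extension, which proves the ``if'' direction.

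For $g\ge2$, I will show that no continuous extension exists at the point $\infty$ by producing two sequences converging to $\infty$ along which $\gldim$ has different limits.
\begin{enumerate}
\item Take $\tau_j=\ii j$, so $\alpha=j\to\infty$. Then $\gldim(\tau_j)\to 1+\frac2\pi\arctan 0=1$.
\item Take $\tau_j'=j+\ii$, so $\alpha=1$ and $\beta=j\to\infty$. Then $|\tau_j'|\to\infty$, so $\tau_j'\to\infty$ in $\overline{\HH}$, while $\gldim(\tau_j')=1+\frac2\pi\arctan(g-1)>1$ is constant.
\end{enumerate}
Since $g-1>0$, these two limits differ, and continuity fails at $\infty$.

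As a sanity check, the same phenomenon appears at real boundary points $\beta_0\in\R$. Approaching $\beta_0$ vertically sends $\alpha\to0$, so $\gldim\to 2$. Approaching instead along a horocycle tangent at $\beta_0$, for example the circle $|\tau-(\beta_0+\ii r)|=r$, also sends $\alpha\to0$, so I expect every path to a real point to give the limit $2$ and the real points to be harmless. The point $\infty$ is therefore the genuine obstruction, and the argument above only uses that one point.

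The main obstacle I anticipate is pinning down the topology of $\overline{\HH}$ near $\infty$ as used in \cite{KKO}. The argument needs horizontal sequences $\beta\to\pm\infty$ at fixed $\alpha$ to converge to $\infty$. If the compactification identifies $\overline\HH$ with $\overline\Delta$ via a Möbius transformation, this holds. If instead $\infty$ there means $\alpha\to\infty$ only, I would switch to real boundary points. In that case I would use sequences tending to $\beta_0$ with $\alpha$ bounded below, or check whether points with $\beta\to\pm\infty$ accumulate on the circle. I would verify this against the projectivized-mass-function description: the mass of torsion sheaves versus line bundles.
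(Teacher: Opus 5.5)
Your proposal is correct and is essentially the paper's proof: for $g=1$ the function $\gldim\equiv1$ extends constantly, and for $g\ge2$ the paper uses exactly your two sequences $\ii j$ and $j+\ii$. Both tend to $\infty\in\overline{\HH}$, and by \Cref{pro:positive-formula} their $\gldim$-limits are $1$ and $1+\frac{2}{\pi}\arctan(g-1)>1$. Your worry about the topology at $\infty$ is unfounded: for each class, $|\tau r-d|/|\tau|\to|r|$ as $|\tau|\to\infty$ in any direction, so your horizontal sequence does converge to $\infty$ in the Kikuta--Koseki--Ouchi closure.
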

\begin{proof}
For $g=1$, $\gldim$ is constant, so this compactification is automatically
compatible. Now assume \(g\ge2\). The two sequences \(\tau_j=j+\ii\) and \(\tau_j'=\ii j\) converge to
\(\infty\in\overline{\HH}\).  By \Cref{pro:positive-formula},
\[
  \gldim(\sigma_{1,j})
  =1+\frac2\pi\arctan(g-1)>1
\]
is constant, and
\[
  \gldim(\sigma_{j,0})
  =1+\frac2\pi\arctan\!\left(\frac{g-1}{j}\right)
  \longrightarrow1.
\]
Thus $\gldim$ is not continuous at \(\infty\) when $g\ge2$.
\end{proof}
We now pass to the reduced quotient $\cM_C$. For an elliptic curve \(C\), this quotient is
\[\cM_C\simeq\PSL_2(\Z)\backslash\HH.
\]
Its standard compactification is the compactified modular curve
 \[X(1)\coloneq\PSL_2(\Z)\backslash(\HH\cup\bP(\QQ)),\]
whose boundary consists of one cusp. Since \(\gldim\) is identically one, this gives a \(g\)-compactification of $\cM_C$ by extending $\gldim$ constantly.

The cusp can be represented by the rank degeneration.  Along the path
\(\alpha\to+\infty\), after normalization, the central charges converge to
\[
  Z_{\rk}(r,d)=\ii r
\]
with the heart $\Coh(C)$.  Denote by $\Coh_0(C)\subset\Coh(C)$ the category of torsion sheaves on $C$ and $\mathrm{VB}(C)$ the category of vector bundles on $C$. The associated slicing $\cP_{0}$ is determined by
\[
\cP_0(1)=\Coh_0(C),\quad \cP_0(\frac12)=\mathrm{VB}(C).
\]
This is not an honest Bridgeland stability condition, since it vanishes on
torsion classes.

Assume now $g\ge2$. Tensoring by a line bundle of degree one sends
\(\beta\) to \(\beta+1\).   Therefore the reduced quotient
is given by
\[
  \cM_C
  :=
  \Aut(\Db(C))\backslash\Stab(\Db(C))/\C
  \simeq
  \HH/\Z
  \xrightarrow[\simeq]{\ q=e^{\ii2\pi\tau}\ }
  \Delta^\ast,
\]
where $\Delta^{\ast}\coloneq\Delta\backslash\{0\}$. Consider the reduced disk compactification
\[
  \overline\cM_C:=\overline\Delta,
\]
which is obtained by compactifying the quotient \(\HH/\Z\). By \Cref{pro:positive-formula}, $\gldim$ extends uniquely and continuously to
\[
  \overline{\gldim}:
  \overline\cM_C
  \longrightarrow[1,2],
\]
with
\[
  \overline{\gldim}(0)=1,
  \qquad
  \overline{\gldim}(q)=2\quad(|q|=1).
\]
For \(0<|q|<1\),
\[
\overline{\gldim}(q)
  =
  1+\frac2\pi
  \arctan\left(
    \frac{2\pi(g-1)}{-\log|q|}
  \right).
\]
Therefore, $\overline\cM_C$ is a \(g\)-compactification of $\cM_C$. It remains to describe the limiting stability data on the boundary. At \(q=e^{2\pi\ii\beta}\in\partial\Delta\) with $\beta\in\R/\Z$, the limiting charge is given by
  \[
    Z_\beta(r,d)=-d+\beta r.
  \]
Consider the torsion pair 
\begin{align*}
  \cF_\beta
  &=\{F\in\Coh(C)\colon F\text{ is torsion-free and }\mu_{\max}(F)<\beta\},\\
  \cT_\beta
  &=\{T\in\Coh(C)\colon T\text{ is torsion, or its torsion-free part satisfies }\mu_{\min}(T/T_{\mathrm{tor}})\geq\beta\},
\end{align*}
on $\Coh(C)$. The tilted heart is given by 
\[
\A_{\beta}\coloneq\langle\cF_{\beta}[1],\cT_{\beta}\rangle.
\]
Note that tensoring by a line bundle of degree $n$ identifies $(\A_{\beta},Z_{\beta})$ and $(\A_{\beta+n},Z_{\beta+n})$. Thus, the construction is well defined on $\overline{\cM}_{C}$. The pair \((\A_\beta,Z_\beta)\) gives the limiting boundary stability
data.  If \(\beta\notin\QQ\), this is a non-locally-finite stability
condition in the sense of \cite[Proposition~3.7]{Liu}.  If
\(\beta\in\QQ\), then some nonzero numerical classes have zero
\(Z_\beta\)-charge, and the same data is interpreted as a CLSY weak
stability condition (see \cite[Section~4.3]{Liu}). Finally, the center \(q=0\) is the same rank degeneration as the cusp in the elliptic case.

In conclusion, we obtain the following result:

\begin{proposition}
\label{pro:positive-genus curve}
Let \(C\) be a smooth projective curve with genus $g\ge1$.
\begin{enumerate}
\item If \(g(C)=1\), the standard compactified modular curve $X(1)$ is a
  \(g\)-compactification with \(\overline{\gldim}\equiv1\). 
\item If \(g(C)\ge2\), the closed disk \(\overline\cM_C\simeq\overline{\Delta^\ast}\) is a
  \(g\)-compactification on which $\overline{\gldim}$ takes the value \(1\) at the cusp and \(2\) on the
  outer circle.
\end{enumerate}
\end{proposition}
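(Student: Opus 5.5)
The statement collects what was established in the preceding discussion, so the plan is to assemble the pieces: the explicit formula of \Cref{pro:positive-formula}, the identification of the reduced quotient, and a check that the relevant compact spaces are Hausdorff compactifications to which $\gldim$ extends continuously.

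\emph{Genus one.} I would first justify $\cM_C\simeq\PSL_2(\Z)\backslash\HH$. The identification $\Stab(\Db(C))/\C\simeq\HH$ comes from \cite{Macri}, and the image of $\Aut(\Db(C))$ acting on $\HH$ is $\PSL_2(\Z)$: this is the standard action of autoequivalences on $H^{\mathrm{even}}(C)$ through $\mathrm{SL}_2(\Z)$ by Mukai's theorem, with shifts absorbed in the $\C$-action. The compactified modular curve $X(1)=\PSL_2(\Z)\backslash(\HH\cup\bP(\QQ))$ is compact Hausdorff, and the inclusion of the open curve is an open dense embedding. By \Cref{pro:positive-formula}, $\gldim\equiv1$, so the constant extension $\overline{\gldim}\equiv1$ is continuous. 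This gives (1).

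\emph{Genus at least two.} Here I would show $\cM_C\simeq\HH/\Z$. By Bondal--Orlov, $\Aut(\Db(C))=\Z\times(\Aut(C)\ltimes\Pic(C))$ because $K_C$ is ample. The shift acts through $\C$. Automorphisms of $C$ and $\Pic^0(C)$ act trivially on numerical classes, so they act trivially on $\Stab$ (the argument of the remark following \Cref{prop:pre-reachability}, via \cite{Pol07}). Tensoring by a degree-one line bundle acts by $\tau\mapsto\tau+1$. Composing with $q=e^{2\pi\ii\tau}$ gives $\cM_C\simeq\Delta^\ast$, which sits openly and densely in the compact Hausdorff space $\overline\Delta$.

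Continuity is the key point. Substituting $\alpha=-\log|q|/(2\pi)$ into \Cref{pro:positive-formula} gives
\[
\gldim(q)=1+\frac{2}{\pi}\arctan\!\left(\frac{2\pi(g-1)}{-\log|q|}\right),
\]
a continuous function of $|q|$ alone on $0<|q|<1$. As $|q|\to0$, the argument of $\arctan$ tends to $0$, so the limit is $1$. As $|q|\to1^-$, it tends to $+\infty$, so the limit is $2$, uniformly in $\arg q$. Setting $\overline{\gldim}(0)=1$ and $\overline{\gldim}=2$ on $|q|=1$ therefore yields a continuous extension, since the function is monotone in $|q|$ and has these limits. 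Hence $\overline\Delta$ is a $g$-compactification with the stated values at the cusp $q=0$ and on the outer circle, giving (2).

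\emph{Main obstacle.} The analytic part is elementary. The only delicate step is pinning down the image of $\Aut(\Db(C))$ on $\Stab/\C$. One must check that $\Pic^0$ and $\Aut(C)$ act trivially and not merely preserve central charges; I would deduce this from \Cref{thm:deformation}, since the forgetful map has discrete fibers over a connected group orbit. For the finite group $\Aut(C)$, I would instead argue that it fixes every $\sigma_{\alpha,\beta}$, because it preserves $\Coh(C)$ and slopes.
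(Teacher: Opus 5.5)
Your proposal is correct and follows the paper's argument. You identify $\cM_C$ with $\PSL_2(\Z)\backslash\HH$ in genus one, and with $\HH/\Z\simeq\Delta^\ast$ via $q=e^{2\pi\ii\tau}$ in higher genus, and then read off the continuous extension from the explicit formula of \Cref{pro:positive-formula}: constant in genus one, and a continuous function of $|q|$ with limit $1$ at $q=0$ and $2$ on $|q|=1$ otherwise. You additionally justify the identification of the quotient (Bondal--Orlov, triviality of the $\Aut(C)$ and $\Pic^0(C)$ actions), which the paper takes for granted, while the paper also describes limiting boundary data $(\A_\beta,Z_\beta)$, which \Cref{def:g-compactification} does not require.
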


\subsection{The Projective line}
We first recall the description of $\Stab(\bP)/\C$ from \cite{Okada} and \cite{HLR}. For $k\in\Z$, let $X_k\subset\Stab(\bP)/\C$ be the locus on which  $A_k\coloneq\OO(k-1)[1]$ and $B_k\coloneq \OO(k)$ are both stable. For $\sigma\in X_k$, define 
\[
w_k(\sigma)\coloneq\log Z_\sigma(\OO(k))-\log Z_{\sigma}(\OO(k-1)). 
\]
According to \cite[Section 4.1]{HLJR}, $w_k\colon X_k\xrightarrow{\sim}\mathbb H$ is a biholomorphism. Using $\mathbb C$-action, we normalize $\sigma$ so that 
\[
m_{\sigma}(A_k)=1,\quad \phi(A_k)=1.
\]
Then, $w_k(\sigma)=\log m_\sigma(B_k)+\ii\pi d$, where 
$d\coloneq\phi_{\sigma}(B_k)>0$. 

\begin{theorem}[\cite{Okada}]\label{thm:okada-classification}
For every $\sigma\in\Stab(\bP)/\C$, there exists $\Phi\in\Aut(D^b(\bP))$ such that $\Phi\cdot\sigma\in X_0$.
Normalize $\phi(A_k)=1$ and set $d=\phi(B_k)>0$. Then
\begin{enumerate}
\item if $d<1$, all line bundles and torsion sheaves are stable;
\item if $d=1$, every indecomposable object is semistable;
\item if $d>1$, only shifts of $A_k$ and $B_k$ are semistable.
\end{enumerate}
Moreover, $\Stab(\bP)\simeq\C^2$.
\end{theorem}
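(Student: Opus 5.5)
The plan is to transport the problem to the Kronecker quiver through the Beilinson equivalence $\Db(\bP)\simeq\Db(\modcat K_2)$. Under this equivalence the full strong exceptional pair $(\OO(k-1),\OO(k))$ corresponds to the projectives, and the autoequivalence $-\otimes\OO(1)$ acts on $\Aut(\Db(\bP))$ by $k\mapsto k+1$.

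\textbf{Step 1: reduction to $X_0$.} Given $\sigma$, I would invoke Macr\`i's result for categories generated by an exceptional collection: there is a full exceptional pair all of whose members are $\sigma$-stable. On $\bP$ every full exceptional pair is, up to shifts, of the form $(\OO(k-1),\OO(k))$, by Gorodentsev--Rudakov mutation theory. Hence $\sigma\in X_k$ for some $k$, and applying $\Phi=-\otimes\OO(-k)$ gives $\Phi\cdot\sigma\in X_0$.

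\textbf{Step 2: stability of $A$ and $B$ and the range of $d$.} I fix the normalization $\phi(A_0)=1$, $m(A_0)=1$. Then $\Hom(\OO(-1),\OO)=\C^2$ means $\Hom(A_0,B_0[1])\neq0$, i.e.\ $\Hom(A_0[-1],B_0)\neq0$. Stability of both objects forces $d-0>0$ only in the weak sense compatible with $w_0\colon X_0\simeq\HH$. The key structural fact is that every object lies in the triangulated hull of $A_0,B_0$ via the semiorthogonal decomposition $\langle\OO(-1),\OO\rangle$. The case split is then as follows.

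\begin{enumerate}
\item \textbf{Case $d<1$.} Here $A_0[-1]=\OO(-1)$ and $B_0=\OO$ both lie in $\cP((0,1])$ after a small rotation. The heart $\cP((0,1])$ is then $\Coh(\bP)$, since it contains $\OO(-1)$, $\OO$ and $\OO_x$ (the cone of $\OO(-1)\to\OO$). So $\sigma$ is geometric with $Z(\OO_x)=Z(\OO)-Z(\OO(-1))\neq0$. Stability of $\OO_x$ follows because its only subobjects in $\Coh$ are trivial. Stability of line bundles $\OO(n)$ follows by induction from the short exact sequences $0\to\OO(n-1)\to\OO(n)\to\OO_x\to0$ and the phase inequalities among $\OO(n-1)$, $\OO(n)$ and $\OO_x$. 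The remaining torsion sheaves are extensions of $\OO_x$, so they are semistable, and the indecomposable ones are stable by the usual argument.
\item \textbf{Case $d=1$.} $A_0$ and $B_0$ have the same phase, so the extension-closed category they generate is an abelian heart, namely the Kronecker module category, lying in $\cP(1)$. Every object of this heart is therefore semistable of phase $1$. Since the heart is hereditary, every indecomposable object of $\Db$ is a shift of an object of the heart.
\item \textbf{Case $d>1$.} Take an indecomposable $E$ that is not a shift of $A_0$ or $B_0$. Its filtration with respect to $\langle A_0,B_0\rangle$ has factors which are sums of shifts of $B_0$ and $A_0$, and because $d>1$ these factors are already ordered by phase. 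This filtration is therefore the HN filtration, and it has at least two distinct phases unless $E$ is a shift of $A_0$ or $B_0$. Hence such an $E$ is not semistable.
\end{enumerate}

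\textbf{Step 3: $\Stab(\bP)\simeq\C^2$.} By Step 1, $\Stab(\bP)/\C=\bigcup_k X_k$. Each $X_k\simeq\HH$ via $w_k$, and the charts are glued along the region $d<1$, where both pairs are stable. Following Okada, I would show that the coordinate $w=\log Z(\OO(1))-\log Z(\OO)$ extends to a global biholomorphism $\Stab(\bP)/\C\simeq\C$. Together with the free $\C$-action, this gives $\C^2$.

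I expect the main obstacle to be this last gluing step: checking that the union of the half-planes is simply connected and biholomorphic to $\C$, not merely to a disk. To handle it, I would use the asymptotics of $Z(\OO(k))$ as $k\to\pm\infty$ to show that the quotient map has image all of $\C$ and is injective.
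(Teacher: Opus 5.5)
The paper does not prove this statement; it only cites Okada. Your sketch therefore has to stand on its own, and two steps do not.

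\textbf{What is correct.} Step~1 is acceptable as a citation of the $\bP$-specific lemma of Okada and Macr\`i. Your cases $d=1$ and $d>1$ are also correct. For $d=1$ the Kronecker heart $\langle A_0,B_0\rangle$ lies in $\cP(1)$. For $d>1$ the triangle $B_0^{\oplus b}[j]\to E\to A_0^{\oplus a}[j]$ is already the HN filtration. The strict inequality $d>0$ is just Schur's lemma applied to a nonzero map $\OO(-1)\to\OO$ between non-isomorphic stable objects; replace your ``weak sense'' phrase with that.

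\textbf{Case $d<1$.} Your claim that after a small rotation $\cP((0,1])=\Coh(\bP)$ ``since it contains $\OO(-1)$, $\OO$ and $\OO_x$'' is false.
\begin{itemize}
\item The Kronecker heart $\langle\OO(-2)[1],\OO(-1)\rangle$ contains $\OO(n)$ for every $n\ge-1$ and all torsion sheaves.
\item For a geometric $\sigma$ normalized with $\phi(\OO(-1))=0$, which is what every $\sigma$ with $0<d<1$ turns out to be, $\cP((-\epsilon,1-\epsilon])$ is exactly this Kronecker heart for all small $\epsilon>0$.
\item $\Coh(\bP)$ appears only for the rotation that puts $\OO_x$ in phase $1$. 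That presupposes $\OO_x$ is semistable, so your induction has no footing.
\end{itemize}
The repair is not to rotate at all.
\begin{itemize}
\item For $0<d\le1$, both $A_0$ and $B_0$ lie in $\cP((0,1])$. Hence $\langle A_0,B_0\rangle=\cP((0,1])$, and $\sigma$ is determined by $Z$.
\item Let $\pi\psi$ be the argument of $Z(\OO_x)=me^{\ii\pi d}-1$, so $d<\psi<1$.
\item The geometric stability condition $\sigma'$ with $\cP'((\psi-1,\psi])=\Coh(\bP)$ and central charge $Z$ has all line bundles and skyscrapers stable, with $\phi'(\OO(-1))=0$ and $\phi'(\OO)=d$.
\item Hence $\cP'((0,1])=\langle A_0,B_0\rangle$, and $\sigma=\sigma'$.
\end{itemize}
Your assertion that indecomposable torsion sheaves are stable is also wrong. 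The length-$k$ sheaf $\OO_{kx}$ with $k\ge2$ contains $\OO_x$ with the same phase, so it is only semistable. Item~(1) holds for line bundles and skyscrapers, with torsion sheaves semistable; that is all the paper uses later.

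\textbf{The identification $\Stab(\bP)\simeq\C^2$.} The global coordinate $w=\log Z(\OO(1))-\log Z(\OO)$ does not exist.
\begin{itemize}
\item In $X_0$, with $Z(\OO(-1))=1$ and $Z(\OO)=me^{\ii\pi d}$, the sequence $0\to\OO(-1)\to\OO^{\oplus2}\to\OO(1)\to0$ gives $Z(\OO(1))=2me^{\ii\pi d}-1$.
\item This vanishes at the point $(m,d)=(\tfrac12,2)$ of $X_0$.
\item So $w$ has a logarithmic singularity on $\Stab(\bP)/\C$, and no asymptotics of $Z(\OO(k))$ can make it a biholomorphism onto $\C$.
\end{itemize}
What your case analysis does give is the gluing picture. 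For $d_k\ge1$ only shifts of $A_k,B_k$ are stable. So for $j\ne k$, $X_j\cap X_k$ is the geometric locus, which is $\{d_k<1\}$ in every chart and is a copy of $\HH$. Then $\Stab(\bP)/\C$ is this half-plane with, for each $k$, the closed half-plane $\{d_k\ge1\}\subset X_k$ attached along a boundary interval. This surface is visibly simply connected and noncompact, so it is $\C$ or the disk.

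Deciding which is exactly the crux you flag, and it needs an input you have not supplied. One option is to show that the quotient cylinder $\cM_{\bP}$ is conformally $\C^*$ rather than a punctured disk or an annulus. Another is an explicit global coordinate, such as the quantum-differential-equation map $\mathcal B$ of Halpern-Leistner et al.\ cited in the paper.
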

In \cite[Proposition 26]{halpernleistner2024noncommutativeminimalmodelprogram}, there is an explicit biholomophism $\mathcal B\colon\C\to\Stab(\bP)/\C$ given by solutions to certain quantum differential equation such that tensoring by $\OO(1)$ acts by 
\[
\tau\mapsto\tau+\ii\pi.
\]
Hence, the quotient space
\[
\cM_{\bP}\simeq\C/(\ii\pi\Z)\xrightarrow[q=e^{2\tau}]{\simeq}\mathbb C^{\ast}.
\]

\begin{proposition}\label{thm:p1-formula}
Let $\sigma\in X_0$ and normalize it as above. Then 
\[
  \gldim(\sigma)=\max\{1,d\}.
\]
In particular, the image of global dimension on \(\Stab(\bP)\) is
\([1,\infty)\).
\end{proposition}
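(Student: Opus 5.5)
We split according to Okada's trichotomy in \Cref{thm:okada-classification}, with $\sigma\in X_0$ normalized so that $A_0=\OO(-1)[1]$ has phase $1$ and mass $1$, and $B_0=\OO$ has phase $d>0$. Two facts drive every case. First, the lower bound $\gldim(\sigma)\ge 1$ holds throughout, by \Cref{thm:kot-lower-bound} together with $\Sdimup\Db(\bP)=1$. Second, the identity coming from Serre duality, as in the proof of \Cref{pro:positive-formula}, reads
\[
\gldim(\sigma)=1+\sup_{E}\bigl(\phi^+_\sigma(E\otimes\OO(-2))-\phi_\sigma(E)\bigr),
\]
where $E$ ranges over $\sigma$-stable objects. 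Equivalently, by \Cref{lem:gls}, $\gldim(\sigma)$ is the smallest $t$ with $\sigma\otimes\OO(-2)\lessapprox\sigma[t-1]$. The sup is justified because $\Hom(E,F)\neq0$ forces $\Hom(F,E\otimes\OO(-2)[1])\neq0$, and it is attained by taking $F$ to be the top HN factor of $\Sfun E$.

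\textbf{Case $d>1$.} Here the only indecomposable semistable objects are shifts of $A_0$ and $B_0$, with phases $1+\Z$ and $d+\Z$. The nonzero Hom spaces between them are:
\begin{itemize}
\item $\Hom(\OO(-1),\OO)\ne 0$, i.e.\ $\Hom(A_0[-1],B_0)\neq0$, giving gap $d-(1-1)=d$;
\item the self-Homs and the $\Ext^1$'s, which vanish on $\bP$ between $\OO(-1)$ and $\OO$ in both directions.
\end{itemize}
Since $\Ext^1(\OO,\OO(-1))=H^1(\OO(-1))=0$ and $\Hom(\OO,\OO(-1))=0$, no morphism runs in the other direction. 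Hence $\gldim(\sigma)=\max\{d,0\}=d$. I would double-check that semistable objects here are just direct sums of the shifts, so that no other gaps arise.

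\textbf{Case $d=1$.} Both $\OO(-1)[1]$ and $\OO$ have phase $1$, so the heart $\langle\OO(-1)[1],\OO\rangle$, the Kronecker heart, lies entirely in $\cP(1)$. Every object is therefore a sum of shifts of objects of this hereditary heart, and this is where I use that every indecomposable is semistable. Homs between shifts by $[k]$ are nonzero only for $k=0,1$, because the Kronecker algebra is hereditary. So $\gldim(\sigma)\le1$, and combined with the lower bound $\gldim(\sigma)=1$.

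\textbf{Case $d<1$.} The heart $\cP(0,1]$ is $\Coh(\bP)$ up to the $\C$-action, and all line bundles and torsion sheaves are stable. Indecomposable sheaves on $\bP$ are line bundles and torsion sheaves, and they generate all semistables. I would apply the displayed Serre formula, comparing phases of $E$ and $E\otimes\OO(-2)[1]$. For torsion $E$ the shift is exactly $1$. For $E=\OO(k)$, the phase $\phi(\OO(k))$ lies in $(0,1)$ and is increasing in $k$, so $\phi(\OO(k-2))<\phi(\OO(k))$. Hence $\phi(\OO(k-2)[1])-\phi(\OO(k))<1$. Every gap is therefore at most $1$, and $\gldim=1$.

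\textbf{The image.} The value $d$ ranges over all of $(0,\infty)$ on $X_0$, since $w_0\colon X_0\to\HH$ is onto and $\pi d$ is the imaginary part of $w_0$. So the image of $\max\{1,d\}$ is $[1,\infty)$, and by $\Aut$-invariance this is the image on all of $\Stab(\bP)$.

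\textbf{Main obstacle.} The delicate part is the $d<1$ case. There one must make sure that the supremum over all stable objects, not just the indecomposable sheaves, is controlled. The cleanest route is to verify $\sigma\otimes\OO(-2)\lessapprox\sigma$ via criterion (2) of \Cref{lem:phase-order-tests} applied to the stable objects, which are exactly line bundles and skyscrapers. The case $d>1$ should be a direct check.
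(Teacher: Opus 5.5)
Your proof is correct. For $d>1$ it is the paper's argument: the same direct computation of $\Hom(A_k[p],B_k[q])$.

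For $0<d\le 1$ you take a different route. The paper treats $d<1$ and $d=1$ together. Okada's classification makes every indecomposable object semistable, so $\sigma$ is totally semistable, and \cite[Proposition~3.5]{Qiu} gives $\gldim(\sigma)\le 1$; the self-extension of a skyscraper gives the lower bound.

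You instead argue by hand in each subcase.
\begin{itemize}
\item For $d=1$, the Kronecker heart $\langle\OO(-1)[1],\OO\rangle$ lies in $\cP(1)\subset\cP((0,1])$, so these nested hearts coincide. Hereditarity then bounds the gap by $1$, exactly as for the standard stability condition in \Cref{thm:algebra}.
\item For $d<1$, you use the Serre-duality formula, equivalently \Cref{lem:gls}, i.e.\ checking $\sigma\otimes\OO(-2)\lessapprox\sigma$, against the explicit list of indecomposable semistable objects. The key input is $\phi(\OO(k-2))<\phi(\OO(k))$, forced by $\Hom(\OO(k-2),\OO(k))\neq0$ between non-isomorphic stable objects.
\end{itemize}

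The paper's route is shorter but rests on Qiu's totally-semistable criterion. Yours is self-contained, and it shows that the value $1$ comes from $K_{\bP}=\OO(-2)$ having negative degree. This is the same mechanism that produces gaps larger than $1$ in genus $\ge 2$ (\Cref{pro:positive-formula}).

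The point you flag as the main obstacle is not a real one. Every semistable object is a direct sum of indecomposable semistable ones, and nonvanishing of $\Hom$ between direct sums is detected on summands. You have already covered all indecomposable semistables: line bundles, and all torsion sheaves, for which $E\otimes\OO(-2)\cong E$.

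Your derivation of the image $[1,\infty)$, from surjectivity of $w_0$ onto the upper half-plane together with $\Aut$-invariance of $\gldim$, also supplies a step the paper leaves implicit.
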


\begin{proof}
If \(0<d\le1\), by \Cref{thm:okada-classification}, every indecomposable
object is semistable.  Thus \(\sigma\) is totally semistable, and
\cite[Proposition~3.5]{Qiu} gives \(\gldim(\sigma)\le1\).  A skyscraper
sheaf is semistable and has a nonzero self-extension, so
\(\gldim(\sigma)\ge1\).

Assume \(d>1\). The only semistable indecomposable objects, up to shifts, are
\(A_k\) and \(B_k\) by \Cref{thm:okada-classification}. Since $A_k,B_k$ are exceptional,
their self-maps give phase difference zero. In addition, we have
\[
  \Hom(A_k[p],B_k[q])
  \cong\Ext^{q-p-1}(\OO(-1),\OO).
\]
The only nonzero degree is $q-p-1=0$ and the phase difference is $d$. In the reverse direction, 
\[
\Hom(B_k[p],A_k[q])\cong\Ext^{1+q-p}(\OO,\OO(-1))=0
\]
since both $H^0(\OO(-1))$ and $H^1(\OO(-1))$ vanish. Hence, $\gldim(\sigma)=d$ if $d>1$.
\end{proof}

The two-point compactification of \(\cM_{\bP}\) is
\(\C^*\subset\bP=\C^{\ast}\cup\{0,\infty\}\), which adds one point at each end of the cylinder. The point $0$ corresponds to $\Re\tau\to-\infty$ and $\infty$ to $\Re\tau\to+\infty$. However, the two-point compactification is not compatible with the global dimension function in the positive end $+\infty$.
\begin{proposition}\label{prop:p1-two-point-failure}
The function $\gldim$ does not extend continuously to the two-point compactification
\(\C^*\subset\bP\).
\end{proposition}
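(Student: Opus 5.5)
The plan is to exhibit a single end of the cylinder $\cM_{\bP}\simeq\C^\ast$ along which $\gldim$ has no limit. To do that, I would find two sequences in $\cM_{\bP}$ that converge to the same point of $\bP$ but along which $\gldim$ tends to different values. The computation rests on \Cref{thm:p1-formula}: every point of $\cM_{\bP}$ has a representative in $X_0$, normalized so that $m(A_0)=1$ and $\phi(A_0)=1$, and there $\gldim=\max\{1,d\}$ with $d=\phi(B_0)>0$. Through the biholomorphism $w_0\colon X_0\to\HH$, $w_0=\log m(B_0)+\ii\pi d$, the parameter $d=\Im w_0/\pi$ can take any positive value independently of the mass $\log m(B_0)\in\R$.

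I would then relate the coordinate $w_0$ on $X_0$ to the coordinate $q=e^{2\tau}$ on $\cM_{\bP}$, where tensoring by $\OO(1)$ is the translation $\tau\mapsto\tau+\ii\pi$. The point to establish is that $X_0/\C$ maps into $\cM_{\bP}$ with image the whole of $\C^\ast$, and that going to $\infty$ in $\C^\ast$ corresponds to leaving every compact subset of $\HH$ in a specific direction. Concretely, I would take the following sequences in $\HH$:
\[
w_j=\ii\pi j \quad\text{(so } d=j\to\infty\text{)},\qquad w_j'=-j+\tfrac{\ii\pi}{2}\quad\text{(so } d=\tfrac12\text{)}.
\]
Along the first, $\gldim=j\to\infty$. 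Along the second, $\gldim=1$ identically.

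The main obstacle is checking that both sequences leave every compact subset of $\cM_{\bP}$ and accumulate at the same end, presumably $+\infty$ as the paper indicates. Leaving every compact set is the easier half. Since $w_0$ restricted to $X_0$ is a biholomorphism onto $\HH$ and $X_0$ meets each $\Aut$-orbit in a controlled way, sequences that leave every compact subset of $\HH$ while staying in a fundamental domain for the residual identifications should leave every compact subset of the quotient. I would check this using Okada's description of the chambers $X_k$: for $d>1$ the stable objects are only $A_0,B_0$, so $X_0$ and $X_k$ with $k\neq0$ overlap only where $d\le1$. Hence the region $d>1$ injects into $\cM_{\bP}$ modulo shifts.

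Matching the two sequences to the same end is the harder half. If I cannot identify directly which end the sequence $w_j'$ reaches, I would use a connectivity argument instead. The set $\{\gldim>2\}$ is the image of $\{d>2\}$, which is a connected region of $\HH$ that is unbounded only as $d\to\infty$. So it maps to a neighborhood of exactly one end, and $\gldim\to\infty$ at that end. Meanwhile $\{\gldim=1\}=\{d\le1\}$ is also noncompact in $\cM_{\bP}$, through $\log m(B_0)\to\pm\infty$. Its unbounded part must therefore approach an end, and it must approach the end where $\gldim\to\infty$ for at least one of the two signs. The reason is that near the other end a continuous extension with finite value there is still possible, and both ends cannot be absorbed by the region $d\le1$ alone, because $\{d\le1\}$ separates $\{d>2\}$ from nothing else.

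If that topological argument turns out too slippery, I would fall back on the explicit formula of \cite{halpernleistner2024noncommutativeminimalmodelprogram}. Its asymptotics of $Z(\OO)$ and $Z(\OO(-1))$ as $\Re\tau\to+\infty$ should exhibit phases $d\to\infty$ along one direction and $d<1$ along another ray with the same $\Re\tau\to+\infty$. In either version, two sequences converging to the same boundary point with different limits of $\gldim$, namely $\infty$ and $1$, contradict continuity.
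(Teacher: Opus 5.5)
Your skeleton matches the paper's: you take two sequences in $X_0$, use $\gldim=\max\{1,d\}$ from \Cref{thm:p1-formula}, and get two different limits of $\gldim$. Your particular sequences also turn out to be valid. The gap is the step you yourself flag. Nothing in the proposal shows that $w_j=\ii\pi j$ and $w_j'=-j+\ii\pi/2$ leave every compact set of $\cM_{\bP}$ through the \emph{same} end.

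\textbf{Divergence of $w_j'$ is not proved.} Your argument uses only that $\{d>1\}$ injects into $\cM_{\bP}$. But $w_j'$ has $d=\tfrac12$, and on the geometric part the map $X_0\to\cM_{\bP}$ is infinite-to-one. In the chart $Z(r,\deg)=-\deg+\tau r$ with $\tau\in\HH$, one has $w_0=\log\frac{\tau}{\tau+1}$, and $\tau\sim\tau+1$ under $\otimes\OO(1)$. So escaping in $\HH$ does not imply escaping in $\cM_{\bP}$. For example, $\tau=j+\ii$ gives $w_0\to0\in\partial\HH$, yet it is a single point of $\cM_{\bP}$.

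\textbf{The connectivity argument is wrong.} The set $\{d>2\}=\{\Im w_0>2\pi\}$ is also unbounded as $\Re w_0\to\pm\infty$ with $d$ bounded; along $w_0=R+3\pi\ii$ one has $\gldim\equiv3$. So its image is not a neighbourhood of an end at which $\gldim\to\infty$. In fact, at the end in question $\gldim$ has no limit at all, which is exactly what is to be proved. The closing sentence about separation is not an argument. The fallback via \cite{halpernleistner2024noncommutativeminimalmodelprogram} is only a hope.

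\textbf{What is missing, and how the paper handles it.} You need an actual criterion deciding which end a divergent sequence reaches. The paper keeps both sequences in the closed region $\{d\ge1\}$, where the chart injects: $w=R+\ii\pi$ and $w=R+\ii\pi\sqrt R$. Both satisfy $w/(1+|w|)\to1$, and the paper invokes the convergence criterion from the proof of \cite[Theorem~6.16]{HLJR} to see that both escape through the positive end.

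\textbf{How to repair your version.} Your sequences can be brought into the same form. Since $w_0(\sigma\otimes\OO(-1))=w_1(\sigma)$, a direct computation shows that $w_j'$ is the same point of $\cM_{\bP}$ as the point with $w_0=\log(2+\ii e^{j})$. There $\Re w_0\to+\infty$ and $\Im w_0\to\pi/2$. With that criterion, $w_j'$ goes to $b_*$ and $w_j$ goes to $b_{1/2}$ in \Cref{thm:p1-qc-disk}. Both lie on the outer circle, i.e.\ at the end $\infty$. Without such an input, or an explicit gluing model of $\cM_{\bP}$ from $\HH/\Z$ and the chamber $\{d\ge1\}$, the key step remains unproved.
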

\begin{proof}
Fix $R_0\in\mathbb R_{>0}$. Consider two sequences $\{\sigma_R\colon R>R_0\},\{\sigma_R'\colon R>R_0\}\in X_0$ such that
\[
  w(\sigma_R)=R+\ii\pi,
  \qquad
  w(\sigma_R')=R+\ii\pi\sqrt R.
\]
By the convergence criterion in the proof of \cite[Theorem 6.16]{HLJR}, both sequences escape through the positive end. \Cref{thm:p1-formula} gives 
\[
\gldim(\sigma_{R})=1,\quad \gldim(\sigma_R')=\sqrt{R}\to+\infty.
\]
Hence, $\gldim$ can not extend continuously to the two-point compactification of $\cM_\bP$.
\end{proof}

Similarly as in the case of genus $g\ge2$, Kikuta--Koseki--Ouchi prove that the projectivized mass map embeds
\(\operatorname{Geo}(\mathbb P^1)/\C\), and that its closure is also a closed disk
\cite{KKO}. By \Cref{thm:okada-classification} and \Cref{thm:p1-formula}, $\gldim$ extends continuously, and constantly, to this
geometric closed disk. However, this does not compactify the whole stability space.  In fact the mass map on
\(\Stab(\bP)/\C\) is non-injective for every choice of test objects
\cite[Proposition~6.3]{KKO}. The missing information is precisely phase information in exceptional chambers $X_k$.

We now introduce a refined compactification of $\cM_\bP$ which is compatible with $\gldim$, based on \cite{HLJR}. Let $\sigma_t$ be a path of stability conditions on a triangulated category $\D$. Denote by $\phi^+_t(E)$ and $\phi^-_t(E)$ the largest and smallest phase of an HN factor of a nonzero object $E$ with respect to $\sigma_t$, respectively. For any $E\in\D$, define 
\[
\phi_t(E)\coloneq\frac{1}{m_t(E)}\sum_i\phi_t(F_i)\cdot|Z_t(F_i)|,
\]
and 
\[
 \ell_t(E)
  :=\log m_t(E)+i\pi\phi_t(E),
  \qquad
  \ell_t(E/F):=\ell_t(E)-\ell_t(F),
\]
where $F_i$ are the HN factors of $E$ with respect to $\sigma_t$. An object $E$ is called \emph{limit semistable} if
\[
  \phi_t^+(E)-\phi_t^-(E)\longrightarrow0.
\]
Following \cite[Definition~2.8]{HLJR}, a path $\sigma_t$ is called 
\emph{quasi-convergent} if
\begin{enumerate}
\item every object admits a limit HN filtration whose successive limit
  semistable factors have asymptotically strictly decreasing phases;
\item for every pair of limit semistable objects $E,F$, the limit
  \[
    \lim_{t\to\infty}
    \frac{\ell_t(E/F)}{1+|{\ell_t(E/F)}|}
 \]
  exists.
\end{enumerate}
A path $\sigma_t$ in $\Stab(\D)/\C$ is 
quasi-convergent if any lift of $\sigma_t$ to $\Stab(\D)$ is
quasi-convergent. For \(\Db(\bP)\), the compactification obtained from the limiting types of such paths admits a concrete description due to \cite[Section 6.3.2]{HLR},

\begin{theorem}[\cite{HLR}]
\label{thm:p1-qc-disk}
There is a compact Hausdorff space
\[
  \overline\cM_{\bP}
  \simeq
  \frac{S^1\times[-\infty,+\infty]}
       {S^1\times\{-\infty\}\ \mathrm{collapsed}}
  \simeq\overline{\Delta^*},
\]
whose interior is \(\cM_{\bP}\simeq\C^*\).  Its boundary points have the following types:
\begin{enumerate}
\item a cone point \(c_0\), represented by quasi-convergent paths satisfying 
\[
w_k(t)=\frac{\ii}{t};
\]
\item generic outer points \(b_\vartheta\), \(0<\vartheta<1\), represented
   by quasi-convergent paths satisfying
  \[
    \frac{w(t)}{1+\abs{w(t)}}
    \longrightarrow e^{\ii\pi\vartheta};
  \]
\item one distinguished outer point \(b_*\), represented by quasi-convergent paths satisfying
  \[
    \frac{w(t)}{1+\abs{w(t)}}\longrightarrow1.
  \]
\end{enumerate}
\end{theorem}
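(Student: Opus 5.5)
The plan is to reconstruct the space from the chart description of $\Stab(\bP)/\C$ and then read off the boundary types from the limiting behaviour of the single coordinate $w_k$. By \Cref{thm:okada-classification}, every point of $\cM_{\bP}$ has a representative in the closure of one chamber $X_k$. Tensoring by $\OO(1)$ maps $X_k$ to $X_{k+1}$ and intertwines $w_k$ with $w_{k+1}$. Hence, modulo $\Aut(\Db(\bP))$ and the $\C$-action, it suffices to work with $\sigma\in X_0$, normalized as in the text, and with the coordinate
\[
w=w_0(\sigma)=\log m_\sigma(B_0)+\ii\pi d\in\HH .
\]
The identification $\cM_{\bP}\simeq\C^*$, $q=e^{2\tau}$, from \cite{halpernleistner2024noncommutativeminimalmodelprogram} gives the interior. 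The ends of the cylinder are $\Re\tau\to-\infty$ and $\Re\tau\to+\infty$. I would verify that the region $d\le1$ (the geometric chamber), taken modulo tensoring, exhausts a punctured neighbourhood of the negative end. I would also verify that the region $d>1$ in $X_0$, where only $A_0$ and $B_0$ are semistable, is a fundamental domain near the positive end.

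The first real step is to classify quasi-convergent paths. For a path $\sigma_t$ escaping every compact subset of $\cM_{\bP}$, I would lift it chamberwise. Two cases arise, and I would treat them separately.

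\emph{First case:} the lifts eventually stay in the exceptional region $d>1$ of some fixed $X_k$. Then the only limit semistable objects are shifts of $A_k$ and $B_k$, so condition (2) in the definition of quasi-convergence reduces to the existence of the limit of $w(t)/(1+|w(t)|)$ on the closed upper half disk. Escape forces $|w(t)|\to\infty$, because finite limit points of $w$ in the region $d\ge1$ lie in the interior. The limit is therefore $e^{\ii\pi\vartheta}$ with $\vartheta\in[0,1)$. Here $\vartheta=0$ gives the distinguished point $b_*$, where $\Re w\to+\infty$ with $d/\Re w\to0$, and $\vartheta\in(0,1)$ gives $b_\vartheta$. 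The direction $\vartheta=1$ would need $m(B_k)\to0$ with bounded $d$, which pushes the path out of $X_k$ into neighbouring chambers, so it does not occur in this case.

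\emph{Second case:} the path keeps visiting the geometric region, or crosses infinitely many walls between the $X_k$. Here I would show that all line bundles and torsion sheaves become limit semistable. I would then show that, modulo $\Aut$, the limiting data coincide with those of the model path $w_k(t)=\ii/t$. That path degenerates $B_k/A_k$ to phase zero and mass one, that is, towards the wall. These paths give the single cone point $c_0$.

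Next I would topologize $\overline\cM_{\bP}$ as $\cM_{\bP}$ together with these limit types, using the convergence criterion of \cite[Theorem~6.16]{HLJR}: a sequence converges to a boundary point when it is eventually quasi-convergent of that type. I would write down the explicit map to $S^1\times[-\infty,+\infty]$. The first coordinate is the angle $\arg q$, adjusted along the positive end so that it records $\vartheta$ together with the $\Z$-translate. The second coordinate is an increasing reparametrization of $\Re\tau$. On the positive end I would identify the outer circle with the closure of $\{b_\vartheta\}\cup\{b_*\}$. The sequences used in \Cref{prop:p1-two-point-failure}, namely $w=R+\ii\pi$ and $w=R+\ii\pi\sqrt R$, illustrate that different ratios $d/\Re w$ land at different points of this circle: the first tends to $b_*$. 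On the negative end the whole circle collapses to $c_0$. Compactness is then immediate from the model space. Hausdorffness follows once the map is a continuous bijection from a compact space onto a Hausdorff space.

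The step I expect to be the main obstacle is the second case: showing that every quasi-convergent path meeting the geometric region infinitely often has the same limit type $c_0$ modulo $\Aut(\Db(\bP))$. This is where the projectivized mass map of \cite{KKO} fails to be injective. I would first try to show that the masses of all limit semistable objects, suitably normalized, converge to those of torsion sheaves and line bundles in the rank degeneration. Comparing with $w_k=\ii/t$ through the limit HN filtrations would then give the identification.
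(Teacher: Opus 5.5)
The paper does not reprove this statement; it quotes the explicit computation for $\bP$ in \cite[Theorem~6.16 and Lemma~6.17]{HLR}. Your reconstruction is therefore a different route, but it has a genuine gap: its central case split is wrong, because the geometric chamber also reaches the positive end. Normalize a geometric stability condition as $Z=-\deg+\tau'\rk$ on $\Coh(\bP)$ with $\tau'\in\HH$. Tensoring by $\OO(1)$ translates $\tau'$ by $1$, so modulo $\Aut$ the geometric chamber is a punctured disk in the coordinate $e^{2\pi\ii\tau'}$. The model path $w_0=\ii/t$ corresponds to $\tau'=-\tfrac12+\tfrac{\ii}{2}\cot\tfrac{1}{2t}$, so $c_0$ is the puncture $\Im\tau'\to\infty$.

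Now take $\tau'=\ii/t$, i.e.\ $Z_t=-\deg+\tfrac{\ii}{t}\rk$. All line bundles and skyscrapers stay stable, so this path never leaves the geometric chamber. But $e^{2\pi\ii\tau'}\to1$, so it stays outside a fixed neighbourhood of the negative end and escapes through the positive one. In the chart $X_1$ one finds $w_1=\log t+\tfrac{\ii\pi}{2}+o(1)$, so this is a $b_*$-path. Consistently, $\ell_t(\OO/\OO_x)=-\log t-\tfrac{\ii\pi}{2}$ here, whereas along $w_0=\ii/t$ every $\ell_t(L/\OO_x)$ with $L$ a line bundle grows like $+\log t$. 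Your own example $w=R+\ii\pi$ lies on the wall $d=1$, which you placed in the geometric region, and you correctly send it to $b_*$.

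So the implication ``meets the geometric region infinitely often $\Rightarrow c_0$'' is false, and the step you call the main obstacle cannot be proved. Likewise, $\{d>1\}\subset X_0$ is not a fundamental domain near the positive end. That end also contains the strip $0<\Im w_k\le\pi$ with $\lvert\Re w_k\rvert$ large, i.e.\ the part of the geometric chamber near $\tau'\in\Z$, where some $Z(\OO(n))\to0$.

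Second, your dismissal of $\vartheta=1$ is wrong. The map $w_k\colon X_k\to\HH$ is onto, so the path $w_k=-R+\tfrac{3\pi\ii}{2}$, $R\to\infty$, stays in the exceptional part of $X_k$, where only shifts of $A_k,B_k$ are semistable. It is quasi-convergent with $w/(1+\lvert w\rvert)\to-1$. This direction is exactly what closes up the outer circle. The wall $\Im w_k=\pi$ bounding the exceptional half-plane is the limit of the geometric chamber as $\tau'$ approaches $(k-1,k)$. Its two ends $\Re w_k\to+\infty$ and $\Re w_k\to-\infty$ correspond to $\tau'\to k-1$ and $\tau'\to k$, which are identified by $\otimes\OO(1)$.

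Hence you must show that three kinds of paths all give the single point $b_*$: the directions $\vartheta=0$ and $\vartheta=1$ in the exceptional region, the wall, and the geometric cusps $\tau'\to\Z$. Without this, your outer boundary is the half-open arc $\{b_\vartheta:0\le\vartheta<1\}$, and there is no homeomorphism onto $S^1\times\{+\infty\}$.

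The correct split is by ends. Paths tending to $c_0$ are exactly those with $\Im\tau'\to\infty$. Every other escaping path has $\lvert w_k\rvert\to\infty$ for a suitable lift and chart. Among these, $\vartheta\in(0,1)$ forces $d\to\infty$, hence the exceptional region. The identification at $\vartheta\in\{0,1\}$ is where a real argument (in the paper, the computation of \cite{HLR}) is needed.

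Finally, your compactness step is circular. Compactness of $\overline\cM_{\bP}$ transfers from the model only if the inverse map is continuous. Your compact-to-Hausdorff bijection argument already presupposes that compactness.
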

\begin{proof}
This follows from the explicit calculation of \(\bP\) in \cite[Theorem~6.16 and Lemma~6.17]{HLR}.
\end{proof}

Consider the homeomorphism
\begin{equation}\label{eq:rho}
  \rho:[1,\infty]\xrightarrow{\sim}[0,1],
  \qquad
  \rho(t)=1-\frac1t,
  \qquad
  \rho(\infty)=1,
\end{equation}
and let \(\iota:\cM_{\bP}\hookrightarrow\overline\cM_{\bP}\) be the
open embedding.  Define
\begin{equation}\label{eq:g-compactification}
  \overline\cM_{\bP}^{\,g}
  :=
  \overline{\left\{
    \bigl(\iota(\sigma),\rho(\gldim(\sigma))\bigr):
    \sigma\in\cM_{\bP}
  \right\}}
  \subset\overline\cM_{\bP}\times[0,1].
\end{equation}

\begin{proposition}
\label{prop:p1-refinement}
The following statements hold.
\begin{enumerate}
\item The function $\gldim$ extends continuously to
\(\overline\cM_{\bP}\setminus\{b_*\}\), with
\[
  \overline{\gldim}(c_0)=1,
  \qquad
  \overline{\gldim}(b_\vartheta)=\infty
  \quad(0<\vartheta<1).
\]
It has no continuous extension at \(b_\ast\). More precisely, its cluster
set at $b_\ast$ is \([1,\infty]\).
\item The space \(\overline\cM_{\bP}^{\,g}\) is compact Hausdorff,
contains \(\cM_{\bP}\) as an open dense subspace, and carries a
continuous extension
\[
  \overline{\gldim}:
  \overline\cM_{\bP}^{\,g}\longrightarrow[1,\infty].
\]
For the first projection
\(\pi:\overline\cM_{\bP}^{\,g}\to\overline\cM_{\bP}\), we have
\[
  \pi^{-1}(x)=\{x\}\,\,\,\text{ if }x\neq b_*,
  \qquad
  \pi^{-1}(b_*)\simeq[1,\infty].
  \]
  \item Every boundary point of
\(\overline\cM_{\bP}^{\,g}\) is represented by a quasi-convergent
path.  
\end{enumerate}
Consequently, \(\overline\cM_{\bP}^{\,g}\) is a g-compactification of $\cM_{\bP}$.
\end{proposition}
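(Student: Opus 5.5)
The plan is to work in the chart coordinates $w=w_k(\sigma)=\log m_\sigma(B_k)+\ii\pi d$ on each $X_k$, where \Cref{thm:p1-formula} gives $\gldim=\max\{1,d\}=\max\{1,\Im w/\pi\}$. The overlaps of the charts $X_k$ are exactly the region $d<1$ (geometric stability conditions), and there $\gldim\equiv1$. So $\gldim$ is well defined in terms of $w$, independently of $k$. The boundary behaviour then follows from the explicit descriptions in \Cref{thm:p1-qc-disk}.

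For (1), I would treat the boundary points one at a time.

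\begin{enumerate}
\item \textbf{The cone point $c_0$.} A neighbourhood of $c_0$ consists of stability conditions with $|w|$ small. Since $\Im w=\pi d$, such points have $d<1$. Hence $\gldim\equiv1$ near $c_0$, and we set $\overline{\gldim}(c_0)=1$.
\item \textbf{The generic outer points $b_\vartheta$.} Here $w/(1+|w|)\to e^{\ii\pi\vartheta}$ with $0<\vartheta<1$. This forces $|w|\to\infty$ with $\arg w\to\pi\vartheta\in(0,\pi)$, so $\Im w\to\infty$. Therefore $d\to\infty$ and $\gldim\to\infty$. To get continuity, not just sequential limits along quasi-convergent paths, I would use the topology of $\overline\cM_{\bP}$ from \cite{HLR}: a basic neighbourhood of $b_\vartheta$ is a sector $\{|w|>R,\ |\arg w-\pi\vartheta|<\varepsilon\}$ in the exceptional chart. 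On such a sector $\Im w>R\sin(\pi\vartheta-\pi\varepsilon)$, which is large.
\item \textbf{The distinguished point $b_*$.} Here $\arg w\to0$ while $|w|\to\infty$, but $\Im w$ can converge to any prescribed value in $(0,\infty]$. For instance, take $w=R+\ii\pi c$ with $c\in(0,\infty)$ fixed, or $w=R+\ii\pi\sqrt R$ as in \Cref{prop:p1-two-point-failure}. These produce the cluster values $\max\{1,c\}$ and $\infty$. Hence the cluster set is exactly $[1,\infty]$, and no continuous extension exists at $b_*$.
\end{enumerate}

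For (2), I would regard $\overline\cM_{\bP}^{\,g}$ as the closure of the graph of $\rho\circ\gldim$ inside the compact Hausdorff space $\overline\cM_{\bP}\times[0,1]$. It is therefore compact Hausdorff.

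\begin{enumerate}
\item Since $\gldim$ is continuous on $\cM_{\bP}$, its graph is homeomorphic to $\cM_{\bP}$ via $\pi$.
\item The graph is open in its closure. Indeed, $\cM_{\bP}$ is open in $\overline\cM_{\bP}$, and the closure meets $\pi^{-1}(\cM_{\bP})$ only in the graph, because a continuous function has closed graph over its domain.
\item The fibres of $\pi$ over boundary points are the cluster sets transported by $\rho$. By (1) these are singletons away from $b_*$, and $\pi^{-1}(b_*)\simeq\rho([1,\infty])=[0,1]\simeq[1,\infty]$.
\item The extension is defined by $\overline{\gldim}=\rho^{-1}\circ\mathrm{pr}_2$. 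It is continuous because $\mathrm{pr}_2$ is continuous and $\rho$ is a homeomorphism.
\end{enumerate}

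For (3), the points $c_0$ and $b_\vartheta$ are represented by the quasi-convergent paths already listed in \Cref{thm:p1-qc-disk}. For a point $(b_*,\rho(c))$, I would use the path $w(t)=t+\ii\pi\min\{c,1\}$ if $c\le1$ and $w(t)=t+\ii\pi c$ if $c\in(1,\infty)$. For $c=\infty$ I would use $w(t)=t+\ii\pi\sqrt t$. Each of these satisfies $w/(1+|w|)\to1$ and has $\rho(\gldim)\to\rho(c)$.

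The main obstacle is verifying that these paths are genuinely quasi-convergent in the sense of \cite[Definition~2.8]{HLJR}, beyond just converging to $b_*$. My plan is to check this directly. When $d>1$ is constant, only $A_k,B_k$ are semistable and $\ell_t(B_k/A_k)=w(t)$, so the normalized ratio converges; the limit HN filtrations come from the Beilinson-type filtration by $A_k,B_k$. For $c\le 1$ I would take $d$ slightly below $1$ and check the torsion sheaves similarly. The convergence criterion in the proof of \cite[Theorem 6.16]{HLJR}, which was already invoked in \Cref{prop:p1-two-point-failure}, should cover these linear and square-root paths. Combining (1)--(3) with \Cref{def:g-compactification} gives the final claim.
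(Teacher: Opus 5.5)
Your proposal is correct and follows the paper's overall strategy: read off \(\gldim=\max\{1,\Im w/\pi\}\) in the exceptional chart, treat \(c_0\), \(b_\vartheta\), \(b_*\) separately, realize \(\overline\cM_{\bP}^{\,g}\) as the closure of the graph of \(\rho\circ\gldim\), and exhibit quasi-convergent paths over \(b_*\). The differences are in execution.

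\textbf{At \(c_0\) and \(b_\vartheta\).} You argue with neighbourhood bases rather than along the representative paths of \Cref{thm:p1-qc-disk}. This is what is actually needed to pin down the full cluster sets, and hence the fibres of \(\pi\), although it rests on the same reading of the topology of \(\overline\cM_{\bP}\). A small slip: on a sector the correct bound is \(\Im w>R\min\{\sin(\pi\vartheta-\varepsilon),\sin(\pi\vartheta+\varepsilon)\}\).

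\textbf{Over \(b_*\).} You use paths with constant \(d\) for finite target values. The paper instead uses the family \(d_t(s)=1+\frac1t+\frac{s\sqrt t}{1+(1-s)\sqrt t}\). Your choice buys something real. Along \(w(t)=t+\ii\pi c\) the slicing is independent of \(t\), so HN filtrations are fixed and ``limit semistable'' just means semistable. The logarithmic condition then reduces to \(\ell_t(E)=\log(a_E+b_Ee^t)+\ii\pi\phi(E)\). The ordering half of \Cref{lem:uniform} becomes unnecessary; the paper needs it only because its phases move.

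\textbf{Two points to tighten.}
\begin{enumerate}
\item For the value \(1\), just take \(d\equiv1\). There \(\cP(1)=\langle A_k,B_k\rangle\) and all phases are integral, so the same constant-slicing computation works. Your alternative of taking \(d\) slightly below \(1\) moves you into the geometric chamber, where the \(A_k,B_k\) bookkeeping no longer applies. Also, \(\min\{c,1\}\) with \(c\le1\) is vacuous except at \(c=1\), since the fibre is \(\rho([1,\infty])\).
\item For \(c=\infty\), the path \(t+\ii\pi\sqrt t\) does have moving phases, so you must supply the eventual-stabilization argument yourself. Each fixed object has finitely many factors \(A_k[p]\), \(B_k[q]\), and \(\sqrt t+q-1-p\) is eventually positive for all of them. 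From this, \(\ell_t\) of any limit semistable object is either \(O(1)\) or \(w(t)+O(1)\). This is the paper's \(s=1\) case. The criterion from the proof of \cite[Theorem~6.16]{HLJR}, as invoked in \Cref{prop:p1-two-point-failure}, is used there for convergence to the end of \(\cM_{\bP}\), not for quasi-convergence, so it does not cover this step.
\end{enumerate}
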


\begin{proof}
By \Cref{thm:p1-qc-disk}, at the cone point \(c_0\), the exceptional coordinate satisfies
$w(t)\to0$. Hence,
\[
d(t)=\frac{\Im w(t)}{\pi}\longrightarrow0.
\]
By \Cref{thm:p1-formula},
\[
  \gldim(w)=\max\{1,d\}\longrightarrow1.
\]
Now let a path converge to \(b_\vartheta\), where
\(0<\vartheta<1\).  By \Cref{thm:p1-qc-disk},
\[
  \frac{w(t)}{1+|w(t)|}
  \longrightarrow e^{\ii\pi\vartheta}.
\]
It follows that
\[
\operatorname{Im}w(t)\longrightarrow+\infty,
  \qquad
  d(t)=\frac{\operatorname{Im}w(t)}{\pi}\longrightarrow+\infty,
\]
and therefore \(\gldim(\sigma_t)\to\infty\).

It remains to analyze \(b_\ast\). For \(t\geq1\) and \(s\in[0,1]\), set
\[
  d_t(s)
  :=1+\frac1t+
    \frac{s\sqrt t}{1+(1-s)\sqrt t},
  \qquad
  w_{t,s}:=t+\ii\pi d_t(s).
\]
Then \(d_t(s)>1\) and
\begin{equation}\label{eq:universal-limits}
  \frac{w_{t,s}}{1+\abs{w_{t,s}}}\longrightarrow1,
  \qquad
  \lim_{t\to\infty}d_t(s)
  =g(s):=
  \begin{cases}
    (1-s)^{-1},&0\leq s<1;\\
    \infty,&s=1.
  \end{cases}
\end{equation}
Thus every path in this family converges to \(b_*\) in $\overline{\cM}_{\bP}$ and 
\[
  \lim_{t\to\infty}\gldim(w_{t,s})
  =
  \begin{cases}
    \dfrac1{1-s}, & 0\le s<1,\\[6pt]
    \infty, & s=1.
  \end{cases}
\]
This implies the cluster set at $b_{\ast}$ contains
\[
  \left\{
    \frac1{1-s}\colon 0\le s<1
  \right\}
  \cup\{\infty\}
  =
  [1,\infty].
\]
In particular, to prove that \(\gldim\) admits no continuous extension at \(b_\ast\), it remains to show that the paths \(w_{t,s}\) are quasi-convergent, which will be verified below.

Since the product \(\overline\cM_{\bP}\times[0,1]\) is compact Hausdorff, the
closed subset \(\overline\cM_{\bP}^{\,g}\) is also compact Hausdorff.  Over the
interior \(\cM_{\bP}\), it is the graph of the continuous function
\(\rho\circ\gldim\), hence \(\cM_{\bP}\) embeds as an open dense subspace.
For \(x\in\overline\cM_{\bP}\), the fiber of \(\pi\) is precisely the cluster
set
\[
  \pi^{-1}(x)
  =
  \{x\}\times
  \operatorname{Clust}_x(\rho\circ\gldim).
\]
The above boundary computations give that $\pi^{-1}(x)=\{x\}$ for
\(x\neq b_\ast\) and
\[
  \operatorname{Clust}_{b_\ast}(\rho\circ\gldim)
  =
  \rho([1,\infty])
  =
  [0,1].
\]
Equivalently,
\[
  \pi^{-1}(b_\ast)\simeq[1,\infty].
\]
For \(g\in[1,\infty]\), choose \(s=1-1/g\), with \(s=1\) when
\(g=\infty\).

We now verify that, for every fixed $s\in[0,1]$, the path $t\mapsto w_{t,s}$ is quasi-convergent. The representatives of \(c_0\) and \(b_\vartheta\) for $0<\vartheta<1$, are quasi-convergent by
\Cref{thm:p1-qc-disk}. By \Cref{thm:okada-classification}, along \(w_{t,s}\), the
stable factors are \(A_k\) and \(B_k\) up to shifts with
\[
  \phi_{t,s}(A_k[p])=1+p,
  \qquad
  \phi_{t,s}(B_k[q])=d_t(s)+q.
\]
Therefore
\[
  \phi_{t,s}(B_k[q])-\phi_{t,s}(A_k[p])
  =
  d_t(s)+q-p-1.
\]
For \(s<1\), this has a finite limit and for \(s=1\), it tends to \(+\infty\). By \Cref{lem:uniform}, the stable factors of any fixed object
belong to a finite collection independent of $t$, and their order is
eventually constant. Group together consecutive factors whose phase
differences tend to zero. Each resulting subquotient $G$ satisfies
\[
  \phi^+_{t,s}(G)-\phi^-_{t,s}(G)\longrightarrow0,
\]
and is therefore limit semistable. The limiting phase gap between two
successive groups is strictly positive. Hence these groups form a
limit HN filtration.

It remains to verify the logarithmic condition. Suppose first that $s<1$, and set
\[
  \lambda_s:=\lim_{t\to\infty}d_t(s)=\frac{1}{1-s}.
\]
If $E$ is limit semistable, then all stable factors in its
HN filtration have the same limiting phase.  Therefore, these factors are either
all copies of one shift $A_k[p]$, all copies of one shift $B_k[q]$, or,
when $\lambda_s=N\in\mathbb Z$, copies of both $A_k[p]$ and $B_k[q]$
satisfying
\[
  1+p=N+q.
\]
The last possibility is precisely the case in which the limiting phase
gap is an integer. When $s=1$, we have $d_t(1)\to+\infty$, so the third case cannot occur. Let $a_E$ and $b_E$ denote
the total multiplicities of its $A_k[p]$- and $B_k[q]$-factors in the fixed
finite collection of \Cref{lem:uniform}. In particular,
$a_E$ and $b_E$ are independent of $t$. Under our normalization,
\[
  m_{t,s}(A_k[p])=1,
  \qquad
  m_{t,s}(B_k[q])=e^t.
\]
Hence
\[
  m_{t,s}(E)=a_E+b_Ee^t.
\]
If $b_E=0$, then
\[
  \ell_{t,s}(E)
  =
  \log a_E+\ii\pi(1+p).
\]
If $b_E>0$, then
\[
\begin{aligned}
  \ell_{t,s}(E)
  &=
  \log(a_E+b_Ee^t)
  +
  \ii\pi
  \frac{
    a_E(1+p)+b_Ee^t(d_t(s)+q)
  }{
    a_E+b_Ee^t
  }\\
  &=
  w_{t,s}+\log b_E+\ii\pi q+o(1).
\end{aligned}
\]
Indeed, the difference between the two sides is
\[
  \log\left(1+\frac{a_E}{b_E}e^{-t}\right)
  +
  \ii\pi
  \frac{a_E}{a_E+b_Ee^t}
  \bigl(1+p-d_t(s)-q\bigr),
\]
which tends to zero. It follows that every limit-semistable object $E$ satisfies exactly one
of the following asymptotic formulas:
\[
  \ell_{t,s}(E)=c_E+o(1)
\]
or
\[
  \ell_{t,s}(E)=w_{t,s}+c_E+o(1)
\]
for some constant $c_E\in\mathbb C$. Consequently, for any two
limit-semistable objects $E$ and $F$, we have
\[
  \ell_{t,s}(E/F)=c_{E,F}+o(1)
\]
or
\[
  \ell_{t,s}(E/F)
  =
  \pm w_{t,s}+c_{E,F}+o(1)
\]
for some $c_{E,F}\in\mathbb C$. Set
\[
 u_s:=\lim_{t\to\infty}
 \frac{w_{t,s}}{1+|w_{t,s}|}=
 \begin{cases}
 1,&0\leq s<1,\\[2mm]
 \dfrac{1+i\pi}{\sqrt{1+\pi^2}},&s=1.
 \end{cases}
\]

It follows that
\[
 \lim_{t\to\infty}
 \frac{\ell_{t,s}(E/F)}{1+|\ell_{t,s}(E/F)|}
 =
 \begin{cases}
 \dfrac{c_{E,F}}{1+|c_{E,F}|},
   &\ell_{t,s}(E/F)=c_{E,F}+o(1),\\[3mm]
 u_s,
   &\ell_{t,s}(E/F)=w_{t,s}+c_{E,F}+o(1),\\[2mm]
 -u_s,
   &\ell_{t,s}(E/F)=-w_{t,s}+c_{E,F}+o(1).
 \end{cases}
\]
Thus the logarithmic condition in the definition of quasi-convergence
is satisfied. Hence the paths
\(t\mapsto w_{t,s}\) are quasi-convergent.

Finally, given \(L\in[1,\infty]\), take $s=1-\frac1L$ with the convention \(s=1\) for \(L=\infty\). By computations above, the path $w_{s,t}$ represents the point of
\(\pi^{-1}(b_\ast)\) with 
\[
  \lim_{t\to\infty}\gldim(w_{t,s})=L.
\]
Thus every boundary point of $\overline\cM_{\bP}^{\,g}$ has a quasi-convergent representative.
\end{proof}

\begin{lemma}\label{lem:uniform}
Fix $s\in[0,1]$ and an object
$E\in D^b(\mathbb P^1)$. There exist finite subsets
$I_A(E),I_B(E)\subset\mathbb Z$ and multiplicities
$a_p,b_q\in\mathbb Z_{\geq0}$, independent of $t$, such that every
stable factor occurring in the $\sigma_{t,s}$-HN filtration of $E$ is
contained in the fixed finite collection
\[
  \left\{
    A_k[p]^{\oplus a_p}:p\in I_A(E)
  \right\}
  \cup
  \left\{
    B_k[q]^{\oplus b_q}:q\in I_B(E)
  \right\}.
\]
Moreover, after increasing $t$ sufficiently, the order of these factors
is constant, up to grouping factors whose phase differences tend to
zero.
\end{lemma}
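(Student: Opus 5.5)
The plan is to use that, along the path $w_{t,s}=t+\ii\pi d_t(s)$, we have $d_t(s)>1$ for all $t\ge1$. By \Cref{thm:okada-classification}(3), every $\sigma_{t,s}$ then lies in the chamber $X_k$ (with $k$ fixed, say $k=0$). In this chamber the only semistable objects are direct sums of shifts of the two stable objects $A_k=\OO(k-1)[1]$ and $B_k=\OO(k)$. The heart $\cP_{t,s}((0,1])$ is not needed. Instead we use the full exceptional collection $(\OO(k-1),\OO(k))$ together with the fact that $\Db(\bP)\simeq\Db(K_2)$, where $K_2$ is the Kronecker quiver.

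\textbf{Step 1: a $t$-independent decomposition of $E$.} The pair $(B_k,A_k[-1])=(\OO(k),\OO(k-1))$ is a semiorthogonal decomposition in the appropriate order, since $\Hom^\bullet(\OO(k),\OO(k-1))=0$. Every object therefore sits in a canonical triangle
\[
  B_k\otimes V^\bullet\longrightarrow E\longrightarrow A_k\otimes U^\bullet\xrightarrow{\ +1\ }
\]
Here $V^\bullet$ and $U^\bullet$ are finite-dimensional graded vector spaces determined only by $E$, for instance by $\Hom^\bullet(\OO(k),E)$ up to the mutation, and by $\Hom^\bullet(E,\OO(k-1))^\vee$ suitably twisted. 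Decompose $V^\bullet=\bigoplus_q V^q$ and $U^\bullet=\bigoplus_p U^p$. Set $I_B(E)=\{q: V^q\ne0\}$ with $b_q=\dim V^q$, and $I_A(E)=\{p:U^p\neq0\}$ with $a_p=\dim U^p$. None of this depends on $t$ or $s$.

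\textbf{Step 2: identify the HN filtration.} For fixed $t$, the phases are $\phi(A_k[p])=1+p$ and $\phi(B_k[q])=d_t(s)+q$. The HN filtration of $E$ is obtained by ordering the pieces $A_k[p]^{\oplus a_p}$ and $B_k[q]^{\oplus b_q}$ by decreasing phase. Equal phases can only occur when $d_t(s)\in\Z$, and then the two pieces are merged into one semistable factor. The delicate point is that the triangle above puts all $B$-pieces as subobjects and all $A$-pieces as quotients. This already has the HN shape when every $B$-phase exceeds every $A$-phase. In general, however, pieces with $\phi(B_k[q])<\phi(A_k[p])$ must be reordered.

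For such a pair we have $\Hom(A_k[p],B_k[q+1])\cong\Ext^{q-p}(\OO(-1),\OO)$, which is nonzero only when $q=p$, and in that case $\phi(B_k[q])=d_t+p>1+p=\phi(A_k[p])$ because $d_t>1$. So for any pair that is out of order, the relevant connecting morphisms vanish. The corresponding extensions split, and the pieces can be swapped using the octahedral axiom. Performing these swaps, for instance by induction on the number of inversions, produces a filtration with semistable factors of strictly decreasing phase, which is the HN filtration. Its factors are exactly the multiset from Step 1. This proves the first assertion.

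\textbf{Step 3: stabilization of the order.} The order of the factors is determined by the signs of the finitely many numbers $d_t(s)+q-p-1$ for $q\in I_B(E)$ and $p\in I_A(E)$. By \eqref{eq:universal-limits}, $d_t(s)$ is monotone in $t$ for $t$ large: $d_t(s)=1+1/t+s\sqrt t/(1+(1-s)\sqrt t)$, and it converges to $g(s)$. Each difference therefore eventually has constant sign, or tends to $0$ when $g(s)+q-p-1=0$. Grouping the factors whose phase differences tend to zero, the order of the groups is eventually constant.

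The main obstacle is the reordering in Step 2: one must check carefully which Ext groups between shifts of $A_k$ and $B_k$ are nonzero, and confirm that they are always compatible with the phase order because $d_t>1$.
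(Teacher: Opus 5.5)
Your proof is correct, but the first part takes a different route from the paper's.

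\textbf{The paper's route.} It uses that $\Coh(\bP)$ is hereditary to write $E\simeq\bigoplus_r H^r(E)[-r]$, and so reduces to line bundles and torsion sheaves. For each of these it writes down an explicit two-term resolution by copies of $\OO(k-1)$ and $\OO(k)$. In every case the two factors are $B_k[p]$ and $A_k[p]$ with the \emph{same} shift, so they are automatically in HN order because $d_t(s)>1$. The HN filtration of $E$ is then obtained by merging these.

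\textbf{Your route.} You use the canonical triangle of the semiorthogonal decomposition $\langle\OO(k-1),\OO(k)\rangle$. This gives $a_p,b_q$ intrinsically, as graded dimensions of $\Hom^\bullet(E,\OO(k-1))$ and $\Hom^\bullet(\OO(k),E)$, with no classification of sheaves on $\bP$. You then handle possible disorder through the vanishing $\Hom(A_k[p],B_k[q+1])=\Ext^{q-p}(\OO(k-1),\OO(k))=0$ for $q\neq p$.

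That step can be stated more cleanly than as an induction on inversions. The vanishing means the connecting morphism $A\to B[1]$ is block diagonal in the shift index. Hence $E$ splits as a direct sum of unmatched pieces and two-step objects $B_k[p]^{\oplus b_p}\to E_p\to A_k[p]^{\oplus a_p}$, each already in HN order. This is exactly the structure the paper reads off from its resolutions.

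\textbf{What each buys.} Your route gives canonical multiplicities and uses only the exceptional pair and one Ext computation. The paper's route is more explicit and never has to reorder.

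\textbf{Step 3.} This matches the paper's argument. Eventual monotonicity of $d_t(s)$ does not follow from the limit formula, but you do not need it. Convergence of $d_t(s)$ to $g(s)$ already forces each $d_t(s)+q-p-1$ to have eventually constant sign, unless its limit is $0$, in which case those factors are grouped. The paper's derivative computation gives the slightly stronger fact that even grouped factors eventually have a fixed order.
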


\begin{proof}
We first show that, for every fixed object
$E\in D^b(\mathbb P^1)$, all stable factors in its HN
filtration belong to a finite collection independent of $t$.
Since $\operatorname{Coh}(\mathbb P^1)$ is hereditary, there is an
isomorphism
\[
  E\simeq\bigoplus_r H^r(E)[-r],
\]
with only finitely many nonzero summands. It is therefore enough to consider line bundles and torsion sheaves on $\bP$. For $n\geq k$, the standard exact sequence
\[
  0
  \longrightarrow
  \mathcal O(k-1)^{\oplus(n-k)}
  \longrightarrow
  \mathcal O(k)^{\oplus(n-k+1)}
  \longrightarrow
  \mathcal O(n)
  \longrightarrow0
\]
gives a filtration of $\mathcal O(n)$ with factors $B_k^{\oplus(n-k+1)} $ and $A_k^{\oplus(n-k)}$. Their phases are $d_t(s)$ and $1$, respectively, so this is the HN
filtration because $d_t(s)>1$. For $n\leq k-1$, the exact sequence
\[
  0
  \longrightarrow
  \mathcal O(n)
  \longrightarrow
  \mathcal O(k-1)^{\oplus(k-n)}
  \longrightarrow
  \mathcal O(k)^{\oplus(k-n-1)}
  \longrightarrow0
\]
gives, after rotation, a filtration with factors $B_k[-1]^{\oplus(k-n-1)}$ and $
  A_k[-1]^{\oplus(k-n)}$. Their phases are $d_t(s)-1$ and $0$, respectively, so this is again the
HN filtration. Finally, if $T$ is a torsion sheaf of length $\ell$, its standard
resolution
\[
  0
  \longrightarrow
  \mathcal O(k-1)^{\oplus\ell}
  \longrightarrow
  \mathcal O(k)^{\oplus\ell}
  \longrightarrow
  T
  \longrightarrow0
\]
gives an HN filtration with factors $B_k^{\oplus\ell}$ and $A_k^{\oplus\ell}$. Applying these descriptions to the finitely many $H^r(E)[-r]$, we obtain a finite multiset
\[
  \mathcal S_E
  =
  \{A_k[p]\text{ with fixed multiplicities}\}
  \cup
  \{B_k[q]\text{ with fixed multiplicities}\},
\]
which is independent of $t$. 

The order of two $A_k$-factors or two $B_k$-factors is independent of
$t$. Thus the order can change only when
\[
  1+p=d_t(s)+q
\]
for one of the finitely many pairs $(p,q)$ in
$\mathcal S_E$. For fixed $s$, $d_t(s)$ is
eventually monotone. Indeed, after writing $x=\sqrt t$, we have
\[
  d_t(s)
  =
  1+\frac1{x^2}
  +
  \frac{sx}{1+(1-s)x}
\]
and hence
\[
  \frac{d}{dx}d_{x^2}(s)
  =
  -\frac{2}{x^3}
  +
  \frac{s}{(1+(1-s)x)^2}.
\]
This is negative for $s=0$ and positive for all sufficiently large
$x$ when $s>0$. Consequently, each of the finitely many relevant
equalities is crossed only finitely many times. It follows that, for every fixed object $E$, the HN ordering is
constant for all sufficiently large $t$, except that some consecutive
factors may have phase differences tending to zero. 
\end{proof}

\begin{remark}
Topologically, \(\overline\cM_{\bP}^{\,g}\) is obtained from the
quasi-convergent closed disk by replacing the single boundary point \(b^*\)
with the compact interval \([1,\infty]\).  This is analogous to a real
blow-up in that one point, which is replaced by the missing limiting extremal Hom-phase width.
\end{remark}
\section{Algebraic models}
\label{sec:algebraic}

In this section, we discuss algebraic aspects of the reachability problem for the global dimension function. We first consider finite-dimensional algebras, and then explain how semiorthogonal decompositions give stability conditions with large global dimension.

\subsection{Finite-dimensional algebras}
\label{subsec:finite-dimensional-algebras}

Let $A$ be a nonzero finite-dimensional algebra over $\C$, and set
\[
  \A:=\modcat\text{-}A,
  \qquad
  \D:=\Db(\A),
\]
where $\modcat\text{-}A$ denotes the category of finite-dimensional
right $A$-modules.  The homological global dimension is
\[
 \algldim A
 :=\sup\bigl\{m\ge0\,\bigm|\,
 \Ext_A^m(M,N)\ne0
 \text{ for some }M,N\in\A\bigr\}.
\]

\begin{proposition}\label{thm:algebra}
Let $A$ be a finite-dimensional algebra over $\C$.
\begin{enumerate}
 \item If $\algldim A=n<+\infty$, there exists
 $\sigma\in\Stab(\Db(\A))$ such that
 $\gldim(\sigma)=n$.
 \item If $\algldim A=+\infty$, then every
 $\sigma\in\Stab(\Db(\A))$ satisfies
 $\gldim(\sigma)=+\infty$.
\end{enumerate}
\end{proposition}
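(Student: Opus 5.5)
For (1) we would use the standard stability condition attached to the heart $\A=\modcat\text{-}A$. For (2) we would show directly that nonvanishing $\Ext$ in unbounded degrees between a fixed pair of objects forces unbounded phase gaps, whatever the stability condition.

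\emph{Part (1).} Let $S_1,\dots,S_r$ be the simple $A$-modules. Since $\algldim A<\infty$, $K_0(\D)\cong\Z^r$ with basis the classes $[S_i]$, and we take $\Lambda=K_0(\D)$. Define $Z$ on $\A$ by $Z(S_i)=\ii$ for all $i$, so that $Z(M)=\ii\dim_\C M$. Because $\A$ is of finite length and $Z(M)\in\ii\R_{>0}$ for every $0\ne M\in\A$, the pair $(\A,Z)$ has the HN property. Every object of $\A$ is semistable of phase $\tfrac12$, so $\cP(\tfrac12)=\A$ and $\cP(\phi)=0$ for $\phi\notin\tfrac12+\Z$.

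The support property holds by the following estimate. Take the $\ell^1$-norm with respect to the basis $[S_i]$. A semistable object is $M[k]$ with $M\in\A$, and its class is $\pm$ the dimension vector of $M$. Hence
\[
\lVert v(M[k])\rVert=\dim_\C M=\lvert Z(M[k])\rvert .
\]

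Now compute $\gldim$. Semistable objects of phases $\tfrac12+a$ and $\tfrac12+b$ are $M[a]$ and $N[b]$, and
\[
\Hom(M[a],N[b])=\Ext^{b-a}_A(M,N).
\]
This vanishes unless $0\le b-a\le n$, and it is nonzero for $b-a=n$ by the choice of $M,N$ realizing $\algldim A=n$. Therefore $\gldim(\sigma)=n$.

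\emph{Part (2).} Fix an arbitrary $\sigma\in\Stab(\D)$. First, a pigeonhole reduction. Since $\algldim A$ equals the maximal projective dimension of a simple module and there are finitely many simples, $\algldim A=\infty$ implies there exist simples $S,S'$ with $\Ext^{m}_A(S,S')\ne0$ for infinitely many $m$. This uses the standard fact that $\operatorname{pd}S=\sup\{m:\Ext^m(S,-)\ne0\}$ is detected on simples, via dévissage of the second argument.

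Second, the key lemma: if $\Hom(E,F)\ne0$, then some $\sigma$-HN factor $E_i$ of $E$ and some $\sigma$-HN factor $F_j$ of $F$ satisfy $\Hom(E_i,F_j)\ne0$. We would prove this by induction on the lengths of the HN filtrations, applying $\Hom(-,F)$ and $\Hom(E_i,-)$ to the filtration triangles. In particular $\phi_\sigma(F_j)-\phi_\sigma(E_i)\le\gldim(\sigma)$.

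Applying the lemma to $E=S$ and $F=S'[m]$ gives
\[
\gldim(\sigma)\ \ge\ \phi^-_\sigma(S'[m])-\phi^+_\sigma(S)=m+\phi^-_\sigma(S')-\phi^+_\sigma(S).
\]
Here $\phi^-_\sigma(S')$ and $\phi^+_\sigma(S)$ are finite numbers independent of $m$, and $m$ ranges over an unbounded set. Hence $\gldim(\sigma)=+\infty$.

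The main obstacle is the HN-factor lemma in Part (2); the rest is bookkeeping. It is routine but must be stated carefully so that it applies to arbitrary (not necessarily semistable) objects. It is also the same mechanism already used in \Cref{lem:kot} and in \Cref{lem:gls}, so we can follow those arguments. A lesser point is that the $\Lambda$ in (2) may be any lattice, but the argument never uses the central charge or the support property, only the slicing.
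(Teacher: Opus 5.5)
Your proposal is correct and follows essentially the same route as the paper. For (1) you put a constant-phase central charge on the standard heart $\modcat\text{-}A$, and you additionally verify the support property, which the paper leaves implicit. For (2) you use the bound $\gldim(\sigma)\ge m+\phi^-_\sigma(S')-\phi^+_\sigma(S)$ coming from nonvanishing $\Ext^m$ between simples; you fix $S'$ by pigeonhole, whereas the paper lets the simple $S_{j(m)}$ vary with $m$ and bounds $\phi^-_\sigma$ below by its minimum over the finitely many simples.

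One small slip: $Z(S_i)=\ii$ gives $Z(M)=\ii\,\ell(M)$, where $\ell(M)$ is the composition length. This agrees with $\dim_\C M$ only when every simple is one-dimensional, so your support-property estimate should read $\lVert v(M[k])\rVert=\ell(M)=\lvert Z(M[k])\rvert$; nothing else changes.
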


\begin{proof}
Let $S_1,\dots,S_r$ be representatives of the simple right $A$-modules.
Since $\A$ is a finite-length category, their classes form a basis of
$K_0(\A)=K_0(\Db(\A))$.  Define
\[
 Z_{\mathrm{std}}([S_i])=-1
 \qquad(1\le i\le r).
\]
Thus, for every nonzero $M\in\A$,
\[
 Z_{\mathrm{std}}(M)=-\ell(M)\in\R_{<0},
\]
where $\ell(M)$ is the composition length of $M$. Then $\sigma_{\mathrm{std}}=(\cP_{\mathrm{std}},Z_{\mathrm{std}})$ is a stability condition on $\Db(\A)$. Every nonzero semistable object is of the form $M[p]$, with $0\ne M\in\A$,
and has phase $1+p$.  For $M,N\in\A$ and $p,q\in\Z$,
\[
 \Hom_{\D}(M[p],N[q])
 \cong \Hom_{\D}(M,N[q-p])\cong\Ext_A^{q-p}(M,N).
\]
Taking the supremum over all $M,N,p,q$ therefore gives
\[
 \gldim(\sigma_{\mathrm{std}})
 =\algldim A.
\]
Now assume that $\algldim A=\infty$.  Thus some simple module, say
$S_i$, has an infinite minimal projective resolution
\[
 \cdots\longrightarrow P_m\longrightarrow P_{m-1}
 \longrightarrow\cdots\longrightarrow P_0\longrightarrow S_i
 \longrightarrow0.
\]
For each $m$, choose a simple quotient $S_{j(m)}$ of $P_m$.  Minimality of
the resolution implies that, after applying $\Hom_A(-,S_{j(m)})$, all
differentials are zero.  Therefore
\[
 \Ext_A^m(S_i,S_{j(m)})
 \cong\Hom_A(P_m,S_{j(m)})\ne0.
\]

Now fix an arbitrary $\sigma\in\Stab(\D)$. For every $m\ge0$, we have
\[
 \gldim(\sigma)
 \ge m+\phi^-_\sigma(S_{j(m)})-\phi^+_\sigma(S_i).
\]
Since there are only finitely many simple modules, the constant
\[
 C:=\phi^+_\sigma(S_i)-\min_{1\le j\le r}\phi^-_\sigma(S_j)
\]
is finite.  Hence $\gldim(\sigma)\ge m-C$ for every $m$, and letting
$m\to+\infty$ yields $\gldim(\sigma)=+\infty$.
\end{proof}  

When $\algldim A<\infty$, \Cref{thm:algebra} proves the
reachability of the value $\algldim A$, but it does not assert that this value is the categorical infimum. Indeed, $\algldim A$ depends on the chosen algebraic heart and is
not invariant under derived equivalence. By \Cref{thm:kot-lower-bound}, we know that $\Sdimup(\Db(\A))\le\Gd\Db(\A)$. In the following example, we confirm they are equal for the categories arising from
homologically smooth graded gentle algebras, which gives an affirmative answer to the question raised by \cite{CES}.

\begin{example}\label{ex:surface}
Let $(\Sigma,M,\eta)$ be a graded marked surface with no interior stops, assume
that every boundary component contains a marked point, and put
\[
  \D=\cW(\Sigma,M,\eta).
\]
Equivalently, $\D$ is the perfect derived category of the homologically smooth
graded gentle algebra associated with a full formal arc system
\cite{HKK,LP}.  Assume throughout that $\Stab(\D)\neq\varnothing$.

First suppose that $\Sigma$ is not a disk.  Write
$\partial\Sigma=\bigsqcup_{i=1}^b\partial_i\Sigma$, let $m_i$ be the number of
marked points on $\partial_i\Sigma$, and let $\omega_i$ be the winding number
in the convention of \cite{CES}.  If $w_i$ denotes the clockwise winding
number in \cite{QiuFlow}, then $w_i=-\omega_i$.  The formula of
Chang--Elagin--Schroll \cite[Theorem~2.11]{CES} gives
\[
  \Sdimup\D
  =1-\min\left\{0,\frac{\omega_1}{m_1},\ldots,
                    \frac{\omega_b}{m_b}\right\}
  =1+\max\left\{0,\frac{w_1}{m_1},\ldots,
                    \frac{w_b}{m_b}\right\}.
\]
On the other hand, Qiu's critical-value theorem
\cite[Corollary~5.18]{QiuFlow} says that
\[
  \Gd\D\in
  \cV(\Sigma,M,\eta)
  :=\left\{1+\frac{w_i}{m_i}\ \middle|\ w_i\geq0\right\}.
\]
The Poincar\'e--Hopf identity is
\[
  \sum_{i=1}^b w_i=4g-4+2b\geq0,
\]
where the last inequality uses the assumption that $\Sigma$ is not a disk.
Thus $\cV(\Sigma,M,\eta)$ is nonempty and
\[
  \max\cV(\Sigma,M,\eta)=\Sdimup\D.
\]
Consequently, \Cref{thm:kot-lower-bound} gives
\[
  \Sdimup\D
  \leq \Gd\D
  \leq \max\cV(\Sigma,M,\eta)
  =\Sdimup\D.
\]
Hence
\begin{equation*}
  \Gd\D=\Sdimup\D
  =1+\max\left\{0,\frac{w_1}{m_1},\ldots,
                    \frac{w_b}{m_b}\right\}.
\end{equation*}

Now suppose that $\Sigma$ is a disk with $m\geq2$ marked points.  Then
\[
  \D\simeq \Db(\modcat A_{m-1}),
  \qquad \Sfun^m\simeq[m-2].
\]
It follows that $\Sdimup\D=(m-2)/m=1-2/m$.  The Dynkin Gepner stability
condition $\sigma_{\mathrm G}$ satisfies
$\gldim(\sigma_{\mathrm G})=1-2/m$
\cite[Theorem~3.1]{QiuFlow}.  Therefore
\[
  1-\frac2m=\Sdimup\D
  \leq\Gd\D
  \leq\gldim(\sigma_{\mathrm G})
  =1-\frac2m.
\]
Combining the two cases, we obtain
\begin{equation*}
\Gd\D
=
\Sdimup\D
=
\begin{cases}
\displaystyle
1+\max\left\{
0,\frac{w_1}{m_1},\ldots,\frac{w_b}{m_b}
\right\},
& \Sigma\text{ is not a disk},\\[8pt]
\displaystyle
1-\frac{2}{m},
& \Sigma\text{ is a disk with }m\text{ marked points}.
\end{cases}
\end{equation*}

The nonemptiness assumption on the stability space is essential here. The standard-graded
once-punctured torus with one marked point may have empty stability space
\cite[Corollary~1.3]{HW}. 
\end{example}

\subsection{Unboundedness from semiorthogonal decompositions}
We next study the upper range of the global dimension function.
Let $\D$ be the homotopy category of a smooth, proper,
idempotent-complete pretriangulated dg-category, and let
\begin{equation}\label{eq:polarizable-sod}
  \D=\langle\D_1,\ldots,\D_s\rangle
\end{equation}
be a semiorthogonal decomposition.  We say
\eqref{eq:polarizable-sod} \emph{polarizable} if
$\Stab(\D_i)\neq\varnothing$ for every $i$.  It is
\emph{nonorthogonal} if there exist $i<j$, objects
$X\in\D_i$, $Y\in\D_j$, and $q\in\Z$ such that
\[
  \Hom_{\D}(X,Y[q])\neq0.
\]

Choose finite-rank lattices $\Lambda_i$ and stability conditions
\[
  \sigma_i=(\cP_i,Z_i)\in\Stab_{\Lambda_i}(\D_i).
\]
Set $\Lambda:=\bigoplus_{i=1}^s\Lambda_i$.
\begin{proposition}\label{prop:sod-unbounded}
Suppose that \eqref{eq:polarizable-sod} is polarizable and nonorthogonal.
Then
\[
  \overline{\Gd}\D=+\infty.
\]
\end{proposition}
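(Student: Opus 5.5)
The plan is to glue the given stability conditions $\sigma_i$ on the pieces into stability conditions on $\D$ whose relative phases can be shifted freely, and then use the nonorthogonal morphism $X\to Y[q]$ to produce arbitrarily large phase gaps. The gluing tool is the Collins--Polishchuk gluing construction for semiorthogonal decompositions. It produces a stability condition on $\D$ with central charge $Z=\bigoplus Z_i$ on $\Lambda$, and in it each $\cP_i(\phi)$ becomes semistable of the same phase in $\D$. The construction requires a phase gap condition
\[
\Hom^{\le 0}_{\D}\bigl(\cP_i((0,1]),\cP_j((0,1])\bigr)=0\quad (i<j).
\]
Because $\D$ is smooth and proper, for fixed $i<j$ only finitely many degrees $k$ carry nonzero $\Hom(\D_i,\D_j[k])$ between objects of bounded phase ranges. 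I will therefore replace $\sigma_j$ by $\sigma_j[N_j]$ with $N_1\ll N_2\ll\cdots\ll N_s$, which makes the gap condition hold. Each shift changes $Z_j$ only by a sign, so the glued datum stays in $\Stab_\Lambda(\D)$, and the support property follows from the gluing theorem applied to the lattice $\Lambda$.

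Concretely, fix $i<j$ and $X\in\D_i$, $Y\in\D_j$ with $\Hom(X,Y[q])\ne0$. By replacing $X$ and $Y$ by suitable HN factors, I may assume that $X\in\cP_i(\alpha)$ and $Y\in\cP_j(\beta)$ are $\sigma_i$- and $\sigma_j$-semistable. This uses the finiteness of HN filtrations together with the long exact Hom sequences: some pair of factors must still admit a nonzero morphism of some degree $q'$. For each integer $M\ge0$, I then form the glued stability condition $\tau_M$ from the sequence
\[
\sigma_1[N_1],\ \ldots,\ \sigma_i[N_i],\ \ldots,\ \sigma_j[N_j+M],\ \ldots
\]
in which all later shifts are increased by the same amount $M$. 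Increasing the gap between the later pieces only strengthens the gluing condition, so $\tau_M$ is again a stability condition. In $\tau_M$ the object $X$ is semistable of phase $\alpha+N_i$, and $Y[q']$ is semistable of phase $\beta+q'+N_j+M$. Since $\Hom(X,Y[q'])\ne0$, this gives
\[
\gldim(\tau_M)\ \ge\ \beta+q'+N_j+M-\alpha-N_i\longrightarrow+\infty,
\]
and hence $\overline{\Gd}\D=+\infty$.

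The main obstacle is verifying the hypotheses of the gluing theorem uniformly in $M$. The required Hom-vanishing follows from the bounded-degree property of Hom between the $\D_i$ in a smooth proper category, since the Serre functor bounds the degrees. One also needs the fact that semistable objects of each $\sigma_i$ remain semistable in the glued slicing. For the support property with respect to $\Lambda$, I would cite the Collins--Polishchuk statement that gluing preserves the support property once all pieces have it and the central charges are additive.
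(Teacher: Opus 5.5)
Your proposal is essentially the paper's proof. You glue the $\sigma_i$ with the phases of later pieces pushed up by an unbounded amount, reduce the cross-morphism to a nonzero $\Hom(E,F[q'])$ between semistable HN factors, and read off a phase gap growing linearly in the separation parameter; your $M$ plays the role of the paper's $t$.

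The only difference is the gluing input. The paper obtains the glued $\sigma_t\in\Stab_\Lambda(\D)$ with $\cP_i(\phi)\subseteq\cP_t(\phi+(i-1)t)$ directly from \cite[Theorem~3.15(1)]{HLJR}, and this covers two points you still owe on the Collins--Polishchuk route. The first is the uniform Hom-degree bound. It comes from the gluing functor between the smooth proper pieces being given by a perfect bimodule, which shifts phases by a bounded amount; Serre duality alone does not give it. The second is the support property. Collins--Polishchuk state their theorem for reasonable stability conditions, so you need a separate argument; in your large-gap regime it is immediate, since glued semistable objects of a given phase are then direct sums of semistable objects from the shifted pieces.
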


\begin{proof}
For $t\geq0$ and $1\leq i\leq s$, define
\[
  z_i(t):=\pi\ii(i-1)t
\]
and
\[
  \sigma_{i,t}
  :=
  \sigma_i\cdot\bigl(-(i-1)t\bigr).
\]
Thus
\[
  Z_{\sigma_{i,t}}
  =
  e^{\pi\ii(i-1)t}Z_i,
  \qquad
  \cP_{\sigma_{i,t}}(\phi)
  =
  \cP_i\bigl(\phi-(i-1)t\bigr).
\]
For $i<j$ one has
\[
  \frac{z_j(t)-z_i(t)}
       {1+\lvert z_j(t)-z_i(t)\rvert}
  \longrightarrow \ii.
\]
By \cite[Theorem~3.15(1)]{HLJR}, there exists a stability condition $\sigma_t=(\cP_t,Z_t)\in\Stab_{\Lambda}(\D)$ satisfying 
\[
Z_t|_{K_0(\D_i)}=e^{\ii \pi(i-1)t}Z_i\]
and 
\begin{equation}
\label{eq:glued-slicing}
\cP_i(\phi)\subseteq\cP_t(\phi+(i-1)t)
\end{equation}
for every $\phi\in\R$ and $t$ sufficiently large. In particular, every $\sigma_{i,t}$-semistable object remains
$\sigma_t$-semistable. By nonorthogonality, there exist $i<j$, objects $X\in\D_i$ and $Y\in\D_j$, and $q\in\Z$ such that
\[
  \Hom_{\D}(X,Y[q])\neq0.
\]
Consider the finite HN filtrations of $X$ and $Y$ with respect to
$\sigma_i$ and $\sigma_j$, respectively. Tracing the nonzero morphism $X\to Y[q]$
through the long exact $\Hom$-sequences associated with the triangles
in these filtrations, we obtain a $\sigma_i$-semistable HN factor
$E\in\D_i$ of $X$, a $\sigma_j$-semistable HN factor
$F\in\D_j$ of $Y$, and an integer $q'\in\Z$ such that
\[
  \Hom_{\D}(E,F[q'])\neq0.
\]
Note that the integer $q'$ need not coincide with $q$, but it is independent of $t$. Write $\phi_{\sigma_i}(E)=\alpha$ and $\phi_{\sigma_j}(F)=\beta$. By \eqref{eq:glued-slicing}, both $E$ and $F[q']$ are
$\sigma_t$-semistable, with phases
\[
  \phi_{\sigma_t}(E)=\alpha+(i-1)t,
  \qquad
  \phi_{\sigma_t}(F[q'])
    =\beta+q'+(j-1)t.
\]
Consequently,
\[
  \gldim(\sigma_t)
  \geq
  \phi_{\sigma_t}(F[q'])-\phi_{\sigma_t}(E)
  =
  \beta+q'-\alpha+(j-i)t.
\]
Since $j-i>0$, the right-hand side tends to $+\infty$.
\end{proof}

\begin{corollary}
Let $X$ be a connected smooth projective variety.  Then
\[
 \overline{\Gd}\D=+\infty
\]
in each of the following cases:
\begin{enumerate}
  \item $X$ admits a full exceptional collection of length at least two;
  \item $X$ is a Fano threefold of Picard rank one;
  \item $X$ is a cubic fourfold;
  \item $X$ is a Gushel--Mukai variety of dimension $3,4,5$, or $6$;
  \item $X$ is a cubic fivefold.
\end{enumerate}
\end{corollary}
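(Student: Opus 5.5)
The plan is to deduce each case from \Cref{prop:sod-unbounded}, with $\D=\Db(X)$. For each listed class of varieties I will exhibit a semiorthogonal decomposition of $\Db(X)$ that is both polarizable and nonorthogonal. Polarizability requires $\Stab(\D_i)\neq\varnothing$ for every component. Nonorthogonality requires a nonzero graded Hom from an earlier component to a later one. Since $\Db(X)$ is the homotopy category of a smooth proper idempotent-complete pretriangulated dg-category, the proposition then gives $\overline{\Gd}\,\Db(X)=+\infty$.

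\textbf{Case (1).} Take the full exceptional collection $\langle E_1,\dots,E_s\rangle$ with $s\ge2$, so each $\D_i\simeq\Db(\C)$. Polarizability is immediate: $\Db(\C)$ carries the stability condition with $Z(\C)=-1$.

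For nonorthogonality, suppose that $\Hom^\bullet(E_i,E_j)=0$ for all $i<j$. Together with the exceptional vanishing for $i>j$, this makes the decomposition completely orthogonal, so $\Db(X)\simeq\bigoplus_i\Db(\C)$. That contradicts the indecomposability of $\Db(X)$ for connected $X$ (Bridgeland's indecomposability: $\Hom(\OO_X,-)$ detects the support, or more directly, $\OO_X$ has nonzero $\Hom$ with every nonzero object up to shift). The same argument shows that \emph{any} nontrivial semiorthogonal decomposition of $\Db(X)$ with $X$ connected is nonorthogonal. So the nonorthogonality hypothesis is automatic in all cases, and only polarizability needs checking.

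\textbf{Cases (2)--(5).} Use the standard Kuznetsov decompositions:
\[
\Db(X)=\langle \mathcal{K}u(X),E_1,\dots,E_r\rangle
\]
with $E_i$ exceptional. Concretely: $\langle \mathcal{K}u,\OO,\OO(1),\OO(2)\rangle$ for cubic fourfolds; $\langle\mathcal{K}u,\OO,\mathcal U^\vee,\dots\rangle$ for Gushel--Mukai varieties; $\langle \mathcal Ku,\OO,\OO(1),\OO(2),\OO(3)\rangle$ for cubic fivefolds; and for Picard rank one Fano threefolds, $\langle\mathcal Ku,\OO\rangle$ or $\langle \mathcal Ku,\OO,\OO(1)\rangle$ according to index (for the rational cases with full exceptional collections, case (1) applies as well).

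The exceptional components are polarizable as in case (1). For the Kuznetsov components, I will cite the existing existence results, which I expect to be the main obstacle, since this is input from the literature rather than from the paper:
\begin{itemize}
\item Bayer--Lahoz--Macr\`i--Stellari for Fano threefolds of Picard rank one (index one and two) and for cubic fourfolds;
\item Perry--Pertusi--Zhao for Gushel--Mukai varieties of dimensions $4$ and $6$, with dimensions $3$ and $5$ handled via BLMS and PPZ;
\item the corresponding results on the Kuznetsov component of cubic fivefolds (e.g.\ via the BLMS/PPZ inducing machinery, possibly working with a suitable finite-rank lattice).
\end{itemize}
I would phrase the corollary's proof as a citation list, choosing for each $\D_i$ a lattice $\Lambda_i$ (the numerical lattice of $\mathcal Ku$, of rank $2$ or more, or $\Z$ for exceptional objects) so that $\sigma_i\in\Stab_{\Lambda_i}(\D_i)$ exists as required by \Cref{prop:sod-unbounded}. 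Nonorthogonality follows from the connectedness argument above, and the corollary follows.
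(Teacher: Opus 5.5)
Your proposal is correct and follows the paper's proof. Nonorthogonality comes from indecomposability of $\Db(X)$ for connected $X$, which the paper phrases as a nontrivial idempotent in $H^0(X,\OO_X)$, and polarizability comes from $\Db(\C)$ together with the cited existence results for Kuznetsov components (the paper cites a separate theorem for cubic fivefolds rather than the inducing machinery).

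One caveat on your parenthetical: $\OO_X$ need not have nonzero graded $\Hom$ with every object. For example, $\OO(-1,1)$ on $\bP\times\bP$ is orthogonal to $\OO_X$ in both directions by K\"unneth. The correct indecomposability argument puts $\OO_X$ and all skyscrapers $\OO_x$ in the same summand, then uses that skyscrapers form a spanning class.
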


\begin{proof}
For connected $X$, an orthogonal decomposition would give a nontrivial idempotent
of the identity functor, hence a nontrivial idempotent in
$H^0(X,\OO_X)$, contradicting connectedness. By \Cref{prop:sod-unbounded}, it is enough to show the existence of nontrivial polarizable semiorthogonal decompositions for each case. The first case then follows immediately. In each of the remaining cases, there is a standard nontrivial
semiorthogonal decomposition
\[
  D^b(X)
  =
  \bigl\langle
    \mathrm{Ku}(X),E_1,\ldots,E_m
  \bigr\rangle,
\]
where $E_1,\ldots,E_m$ are exceptional objects. The existence of stability conditions on $\mathrm{Ku}(X)$ is proved for the Fano threefolds and cubic
fourfolds in \cite[Theorems~1.1 and~1.2]{BLMS}, for Gushel--Mukai varieties
in \cite[Theorem~1.2]{PPZ}, and for cubic fivefolds in
\cite[Theorem~1.1]{LiuCubicFivefold}.  Thus these decompositions are
polarizable and nonorthogonal.
\end{proof}

\begin{remark}
The nonorthogonality hypothesis is necessary.  For example,
\[
  \Db(\C)\oplus\Db(\C)
  =\langle\Db(\C),\Db(\C)\rangle
\]
is a polarizable orthogonal decomposition, but there are no cross morphisms
and hence $\gldim\equiv0$.  
\end{remark}

\begin{remark}
The function $\Gd$ does not need to be monotone under admissible
embeddings.  Let $X=\FF_3$ be the Hirzebruch surface with a fiber $F\subset X$ and $S\subset X$ the $(-3)$-curve. Elagin--Lunts
\cite[Example~5.15]{ElaginLunts} consider
\[
  E:=R_{\OO_X(S)}(\OO_X),
  \qquad
  \B:=\langle\OO_X(-F),E\rangle\subset\Db(X).
\]
The pair $(\OO_X(-F),E)$ is exceptional, and
$\Hom^k(\OO_X(-F),E)\neq0$ for $k=-1,0,1$.  Thus $\B$ is an admissible
smooth proper subcategory and
\[
  \Sdimup(\B)=3.
\]
Its exceptional decomposition is polarizable, so $\Stab(\B)\neq\varnothing$. By \Cref{thm:kot-lower-bound}, we have $\Gd(\B)\geq3$. On the other hand,
\Cref{thm:variety} yields $\Gd \Db(X)=2$.  Hence
\[
  \Gd(\B)>\Gd(\Db(\FF_3)).
\]
\end{remark}

\bibliographystyle{alpha}
\bibliography{corrected}

\end{document}